\newtheorem{theorem}{Theorem}
\newtheorem{proposition}[theorem]{Proposition}
\newtheorem{corollary}[theorem]{Corollary}
\newtheorem{lemma}[theorem]{Lemma}
\newtheorem{definition}[theorem]{Definition}
\theoremstyle{definition}
\newtheorem{example}[theorem]{Example}
\def\A{\mathcal A}
\def\CC{\mathbf C}
\def\FF{\mathbf F}
\def\NN{\mathbf N}
\def\QQ{\mathbf Q}
\def\QQbar{\overline{\QQ}}
\def\ZZ{\mathbf Z}
\def\aaa{\mathfrak a}
\def\bbb{\mathfrak b}
\def\ppp{\mathfrak p}
\def\qqq{\mathfrak q}
\def\CCC{\mathfrak C}
\def\reflexfield{{K^r}}
\def\reflextype{{\Phi^r}}
\newcommand{\kernel}[1]{\Omega_{#1}}
\def\O{R}           
\def\Omin{S}        
\def\Oother{T}      
\def\Osingle{\O'}   
\def\Odouble{\O''}  
\def\Otriple{\O'''} 
\newcommand{\Ominf}[1]{\O_{\mathrm{min}, #1}} 
\def\Omax{{\ZZ_K}}       
\def\Oreal{{\O_0}}       
\def\Orealtotpos{\O_0^{++}}
\def\Ominreal{{\Omin_0}} 
\def\Ootherreal{{\Oother_0}} 
\def\Omaxreal{\ZZ_{K_0}}     
\def\Omaxreflex{\ZZ_{K^r}}
\DeclareMathOperator{\coker}{coker}
\DeclareMathOperator{\id}{id}
\DeclareMathOperator{\lcm}{lcm}
\DeclareMathOperator{\norm}{N}
\DeclareMathOperator{\optcoker}{(co)ker}
\DeclareMathOperator{\val}{val}
\DeclareMathOperator{\End}{End}
\DeclareMathOperator{\Gal}{Gal}
\title{On polarised class groups of orders in quartic CM-fields}
\author{
Gaetan Bisson\thanks{
University of French Polynesia.
\url{http://gaati.org/bisson/}
}
\and
Marco Streng\thanks{
Universiteit Leiden, The Netherlands.
\url{http://www.math.leidenuniv.nl/\textasciitilde streng/}
}
}
\begin{document}

\maketitle

\begin{abstract}

We give an explicit necessary condition
for pairs of orders in a quartic CM-field to have the same
polarised class group.
This generalises a
simpler result for imaginary quadratic fields.

We give an application of our results to
computing endomorphism rings of abelian surfaces over
finite fields,
and we use our results to extend
a completeness result of Murabayashi and Umegaki~\cite{murabayashi-umegaki}
to a list of abelian surfaces over the rationals
with complex multiplication by arbitrary orders.
\end{abstract}

\section{Introduction}\label{sec:intro}

Let $\A$ be a principally polarised abelian surface
defined over a characteristic-zero field~$k$,
and assume that~$\A$ has
\emph{complex multiplication (CM)},
by which we mean that its endomorphism
ring $\O=\End(\A_{\overline{k}})$
is an order in a quartic number field~$K$.
Then $K$ is a CM-field, that is,
a totally imaginary quadratic extension of a totally real field~$K_0$.
The Galois group of the extension $K/K_0$
is generated by
an element of order two, which we denote by $x\mapsto\overline{x}$
and call \emph{complex conjugation}
since it coincides
with complex conjugation for every embedding
$K\rightarrow\CC$.

Our object of study is the \emph{polarised class group} $\CCC(\O)$ of~$\O$ defined as follows.

\begin{definition}\label{def:polarisedgroup}
Let $I_\O$ be the group of pairs $(\aaa,\alpha)$ where $\aaa$ is an
invertible fractional ideal of~$\O$ 
and $\alpha\in K_0$ is a totally positive element satisfying $\aaa\overline\aaa=\alpha\O$.
Let $P_\O$ be the subgroup formed by pairs of the form
$(x\O,x\overline x)$ for $x\in K^\times$. The quotient $I_\O/P_\O$ is called the
\emph{polarised ideal class group} of $\O$ and is written $\CCC(\O)$.
\end{definition}

This group was first introduced by Shimura and Taniyama~\cite[\S 14]{shimura-taniyama}
in the case of the ring of integers~$\O=\ZZ_K$.
It is significant to a number of problems regarding abelian varieties.

For instance, the group $\CCC(\O)$ acts
faithfully on the set of isomorphism classes of principally
polarised abelian surfaces with endomorphism ring $\O$
and the same \emph{CM-type} as $\A$.
Bisson exploits this
to compute endomorphism rings of abelian surfaces~\cite{end-g2}.
This requires telling orders~$\O$ apart using the structure
of their groups $\CCC(\O)$, and our work will allow us to establish in
how far this is possible. See Section~\ref{sec:applicationcompend}.

We also use polarised class groups to extend
van Wamelen's list of principally polarised
abelian surfaces
with conjectural CM by maximal
orders~\cite{wamelen-rationals}.
We extend the list to arbitrary orders and then prove
that it is correct and complete.
The completeness result extends a result of
Murabayashi and Umegaki~\cite{murabayashi-umegaki}
regarding the original list, and
the correctness result is new
in the sense that it completes a partial proof
of van Wamelen~\cite{wamelen-correctness}.
See Section~\ref{sec:applicationwamelen}.

This paper is organised as follows.
First, we introduce the necessary notation and state our results in
Section~\ref{sec:preliminaries}. Section~\ref{sec:ideals-to-elements} then
translates the relevant groups from a setting with ideals into a
setting with elements of finite rings;
this is where most of the work is done in comparing
polarised class groups for varying orders.
In Sections \ref{sec:valuation} and~\ref{sec:maximal},
the work of Section~\ref{sec:ideals-to-elements}
is used in order to derive explicit bounds on the index between orders
that have the same polarised class group.
Finally, Section~\ref{sec:applications} presents our applications mentioned above.

We conclude this section by briefly recalling how the polarised ideal class
group relates to the classical one; although this relationship is not used in
this paper, we hope it might give the reader a better understanding of this
group. Put
$\Oreal=\O\cap K_0$ and
let $\mathrm{Pic}_{\infty}(\Oreal)$
be the narrow class group of $\Oreal$,
that is, the ray class group of the order $\Oreal$
for the infinite modulus.
The order $\O$ is stable under complex conjugation
because of the Rosati involution of $\A$,
hence the norm is a map $N_{K/K_0}:\O\rightarrow \Oreal$.
The sequence
\[ 
\xymatrix@C-0.5pc{
1
\ar[r] &
\Orealtotpos/N_{K/K_0}(\O^\times)
\ar[r] &
\CCC(\O) 
\ar^{\text{forget $\alpha$}}[rr] & &
\mathrm{Pic}(\O)
\ar^{N_{K/K_0}}[rr] & &
\mathrm{Pic}_{\infty}(\Oreal),
}
\]
is exact, 
where the map from the multiplicative
group of totally positive elements
$\Orealtotpos$ to $\CCC(\O)$ is
$\alpha\mapsto ((1), \alpha)$.

\section{Statement of the results}\label{sec:preliminaries}

As before, let $\A$ be a principally polarised abelian surface with complex
multiplication, denote by $\O$ its endomorphism ring, by $K=\QQ\otimes\O$ its
CM-field, and by $K_0$ the totally real subfield of $K$. We note that this
excludes abelian varieties that are not simple over the algebraic closure.

In what follows, we restrict ideals of $I_{\O}$ and $P_{\O}$ to be coprime to
a fixed integer~$f$; this has no effect on the group $\CCC(\O)$,
but it allows us to compare the groups more effectively as the order~$\O$
varies. Indeed,
for any two orders $\Omin\subset \O$, the
invertible ideals of
$\Omin$ coprime to the
index $f=[\Omax:\Omin]$ are in natural bijection with those of $\O$
via the map
$\aaa\mapsto\aaa\O$.
This extends trivially to injections
$I_\Omin\rightarrow I_{\O}$ and $P_\Omin\to P_{\O}$
which yield
a natural morphism $\CCC(\Omin)\to\CCC(\O)$. We will use these maps
implicitly and say for instance that an ideal $\aaa\subset \Omin$ is principal in $\O$
when we actually mean that $\aaa\O$ is.

To compare $\CCC(\Omin)$ and $\CCC(\O)$ in a computationally efficient manner,
the algorithm of Bisson \cite{end-g2} uses certain elements of $\CCC(\Omin)$ 
best generated through the \emph{reflex type norm} map.
At the same time, the complex multiplication theory that describes
the field of moduli of $\A$ is also phrased in terms of this map.
For these reasons, our main results are formulated in terms of this
map, which we now define.

In the following, let $K$ be a CM-field of arbitrary degree.
A CM-type~$\Phi$ of~$K$ with values in~$\CC$
is a complete set of representatives for the
embeddings $K\rightarrow \CC$ up to complex conjugation.
The \emph{reflex field} $\reflexfield\subset \CC$ of~$\Phi$ is the 
subfield generated by the image
of the \emph{type norm}
\[
\norm_\Phi:K\rightarrow \CC : x\mapsto\prod_{\phi\in\Phi}\phi(x).
\]
The type norm is a half-norm in the sense that
$\norm_\Phi(x)\overline{\norm_\Phi(x)} = N_{K/\QQ}(x)$.

\begin{example}\label{ex:concretecm1}
CM-fields of degree $4$ are either cyclic Galois,
non-Galois with Galois group $D_4$ of order~$8$,
or biquadratic Galois (with Galois group $C_2\times C_2$).
It is known that in the biquadratic Galois case, the abelian surface
is non-simple and has a matrix ring as endomorphism ring,
so this paper restricts to the non-biquadratic case.

In the $C_4$-case, let $\mathrm{Gal}(K/\QQ)=\langle\rho\rangle$
and, in the $D_4$-case, let $\mathrm{Gal}(K^c/K)=\langle\sigma\rangle$
and $\mathrm{Gal}(K^c/\QQ)=\langle\sigma,\rho\rangle$
with $\rho^4=\sigma^2=\sigma\rho\sigma\rho = \id$,
where $K^c$ denotes the normal closure of $K$.
In both cases, we have $\rho^2 = \overline{\cdot}$
and there exists an embedding
$\phi:K^c\rightarrow \CC$
such that $\Phi = \{\phi, \phi\circ\rho\}$.
In the $C_4$-case, we have $\reflexfield = \phi(K)\cong K$.
In the non-Galois case, we have
$\reflexfield = \phi(K')\not\cong K$, where $K'\subset K^c$ is the fixed
field of $\langle \rho\sigma\rangle\subset \mathrm{Gal}(K^c/\QQ)$.
\end{example}

The \emph{reflex type}~\cite[\S 8]{shimura-taniyama} is the set
$\reflextype=\{\psi^{-1}_{|\reflexfield} : \psi\in S\}$
of embeddings of~$\reflexfield$ into the normal closure~$K^c$ of~$K$,
where~$S$ is the set of all embeddings of $K^c$ into~$\CC$
that extend elements of~$\Phi$.
\begin{example}\label{ex:concretecm2}
Concretely in the quartic non-biquadratic situation above,
we have a map $\phi^{-1} : \reflexfield \rightarrow K^c$
and $\reflextype = \{\phi^{-1}, \rho^{-1}\circ\phi^{-1}\}$.
\end{example}

The reflex type is a CM-type of $\reflexfield$ with values in $K^c$ and reflex field~$K$,
hence defines a type norm $\norm_{\reflextype}:\reflexfield\to K$. This norm
can be extended into a homomorphism from the group $I_{\reflexfield}=I_{\reflexfield}(f)$
of invertible ideals~$\aaa$ of $\O_\reflexfield$ coprime to~$f$ to 
the analogous group for~$\O$.
In turn, this defines a map
\begin{align}
I_{\reflexfield}(f) & \longrightarrow \CCC(\O),\label{eq:reflexnormccc}\\
\aaa &\longmapsto
(\norm_{\reflextype}(\aaa),\norm_{\reflexfield/\QQ}(\aaa)).\nonumber
\end{align}
We denote its kernel by~$\kernel{\O}$. It is an important object
because the image $I_{\reflexfield}(f)/\kernel{\O}$ appears naturally as the Galois
group of the field of moduli of the abelian surfaces of type $\Phi$
with endomorphism ring~$\O$ \cite[Main Theorem 3 on page 142]{shimura-taniyama}.
Our main results are the following two theorems,
together with their ingredients from Section~\ref{sec:ideals-to-elements}
and applications
in Section~\ref{sec:applications}.

\begin{theorem}\label{thm:relativeindex}
For any order~$\Oother$ stable under complex conjugation
in a quartic non-biquadratic
CM-field~$K\not\cong\QQ(\zeta_5)$ satisfying
$\kernel{\Omax}\subset \kernel{\Oother}$
we have
\[
[\Omax:\Oother]^2\ \mid\ 2^{40} 3^{16} N_{K_0/\QQ}(\Delta_{K/K_0}).
\]
\end{theorem}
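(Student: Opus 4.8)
The plan is to reduce the divisibility to a collection of local statements, one per rational prime~$p$, which will follow from a ring-theoretic reformulation of the hypothesis $S_{\O_K}\subseteq S_{\O'}$. Fix the auxiliary integer of Section~\ref{sec:intro} to be $f=[\O_K:\O']$, so that invertible ideals coprime to~$f$ correspond under $\aaa\mapsto\aaa\O_K$. Since $[\O_K:\O']=\prod_p[\O_{K,p}:\O'_p]$ — and since $\disc(\O')=[\O_K:\O']^2\disc(\O_K)$ with $\disc(\O_K)$ equal, up to $\disc(K_0)^2$, to $N_{K_0/\QQ}(\Delta_{K/K_0})$, which explains the shape of the bound — it suffices to show $2\,v_p([\O_K:\O'])\le v_p\bigl(2^{40}3^{16}N_{K_0/\QQ}(\Delta_{K/K_0})\bigr)$ for every~$p$. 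Concretely I aim to prove $\O'_p=\O_{K,p}$ for every prime $p\ge5$ not dividing $N_{K_0/\QQ}(\Delta_{K/K_0})$ (note that $\disc(K_0)$ is allowed to contribute nothing), together with $v_2([\O_K:\O'])\le 20$, $v_3([\O_K:\O'])\le 8$, and $2\,v_p([\O_K:\O'])\le v_p(N_{K_0/\QQ}(\Delta_{K/K_0}))$ when $p\ge5$ ramifies in $K/K_0$.

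The central tool is the translation carried out in Section~\ref{sec:ideals-to-elements}. Let $G\subseteq\CCC(\O_K)$ denote the image of the map~\eqref{eq:reflexnormccc}. By the Chebotarev density theorem applied in the reflex field, each class in~$G$ is represented by a pair $(\aaa,n)$ for a dense set of ideals~$\aaa$ of $\O_K$ coprime to~$f$, with $n\in\ZZ_{>0}$ and $\aaa\overline\aaa=n\O_K$. The inclusion $S_{\O_K}\subseteq S_{\O'}$ then forces: for every $x\in\O_K$ coprime to~$f$ with $x\overline x\in\ZZ_{>0}$ and with $x\O_K$ lying in this dense set, there exists $y\in\O'$ satisfying $y\O_K=x\O_K$ and $y\overline y=x\overline x$; equivalently $y=\zeta x$ for some $\zeta$ in the group $\{u\in\O_K^\times:u\overline u=1\}$ of units of relative norm one. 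A short computation with the Hasse unit index shows that in a quartic CM-field this group equals the group~$\mu_K$ of roots of unity of~$K$, which is finite; and when $K$ is not biquadratic and $K\not\cong\QQ(\zeta_5)$, its order is a $\{2,3\}$-unit. Here both hypotheses on~$K$ are used: for biquadratic~$K$ the CM-type is imprimitive and the reflex construction degenerates, while $\QQ(\zeta_5)$ is the unique quartic CM-field with $5\mid|\mu_K|$ and is in any case excluded because there~$G$ is too small for the density statement to carry any information.

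With this reformulation, fix a prime~$p$ and suppose $\O'_p\subsetneq\O_{K,p}$. Using a further Chebotarev argument I would produce elements $x_1,\dots,x_4\in\O_K$, coprime to~$f$, each satisfying the conditions above, whose images are as $\FF_p$-independent in $\O_K/p\O_K$ as the constraints allow; the reformulation then supplies $y_i=\zeta_i x_i\in\O'$ with $\zeta_i\in\mu_K$, so that $\O'\supseteq\sum_i\ZZ y_i$ and $v_p([\O_K:\O'])\le v_p\bigl([\O_K:\sum_i\ZZ y_i]\bigr)$. If $p\ge5$ is unramified in $K/K_0$ one can arrange $\sum_i\ZZ x_i$ to be $p$-maximal, and since then $p\nmid|\mu_K|$ (using $K\not\cong\QQ(\zeta_5)$ for $p=5$) so is $\sum_i\ZZ y_i$, forcing $\O'_p=\O_{K,p}$, a contradiction. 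If $p\ge5$ ramifies in $K/K_0$, the relative different obstructs the $p$-maximality of the~$x_i$ and tracking this obstruction yields $2\,v_p([\O_K:\O'])\le v_p(N_{K_0/\QQ}(\Delta_{K/K_0}))$; for $p\in\{2,3\}$ one falls back on worst-case estimates over orders in quartic $2$- and $3$-adic algebras and over the possible values of~$|\mu_K|$, which gives the stated crude bounds. I expect the main difficulty to be exactly this local analysis: showing that the obstruction to $p$-maximality is controlled precisely by the relative different, so that one obtains a divisibility and not merely an inequality, and carrying out the bookkeeping at~$2$ and~$3$; this is the business of Sections~\ref{sec:valuation} and~\ref{sec:maximal}. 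Multiplying the local bounds over all primes then gives $[\O_K:\O']^2\mid 2^{40}3^{16}N_{K_0/\QQ}(\Delta_{K/K_0})$.
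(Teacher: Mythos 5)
Your reduction of the hypothesis to a statement about ring elements is essentially right: $S_{\Omax}\subset S_{\O'}$ does say that whenever a pair $(x\Omax,x\overline x)$ with $x$ coprime to $f$ lies in the image of \eqref{eq:reflexnormccc}, some $\zeta x$ with $\zeta\in\mu_K=\{\pm1\}$ lies in $\O'$; this matches the spirit of Section~\ref{sec:ideals-to-elements}. The first genuine gap is that you never determine \emph{which} $x$ this applies to, and that is where all the difficulty sits. Chebotarev does not help here: the relevant question is which principal polarised pairs are reflex type norms, and the answer the paper actually uses is Lemma~\ref{lemma:squares} (via $\norm_{\reflextype}\norm_\Phi(\aaa)=\aaa^2(\aaa\overline\aaa)^\sigma$): only the \emph{squares} of $\CCC(\Omin)$ are guaranteed to lie in the image. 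So the usable elements are of the shape $z^2(z\overline z)^\sigma$ (in particular they satisfy $x\overline x\in\QQ$, a constraint whose index in $(\Omax/f\Omax)^\times$ grows with $p$ and is not a power of $2$), and the correct conclusion is only that the kernel \eqref{eq:kernel} has exponent at most $2$ (Proposition~\ref{prop:kernel}), not that it is trivial. Your phrase ``as $\FF_p$-independent as the constraints allow'' conceals exactly the analysis that Proposition~\ref{prop:valuation} performs on the cyclic factors of $(\Omax/f\Omax)^\times$ and their relative norms to the real part; and note that even after that analysis one only obtains $\val_p[\Omax:\O']=\val_p[\Omaxreal:\Ominreal]$ for $p\ge5$, so the real suborder cannot be bypassed the way your plan suggests.

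The second gap is the discriminant factor. Your proposed mechanism --- ``the relative different obstructs the $p$-maximality of the $x_i$ and tracking this obstruction yields $2\val_p([\Omax:\O'])\le\val_p(N_{K_0/\QQ}(\Delta_{K/K_0}))$'' --- is an assertion with no argument behind it, and it is not how the factor arises. In the paper it comes from a separate, purely order-theoretic computation (Lemma~\ref{lem:relativeindex}): stability of $\O'$ under conjugation gives $\tr_{K/K_0}(\O')\subset\Ominreal$, hence $(c\delta)^{-1}\O'$ lands in the trace dual of $\O'$, and computing that index yields $[\Omaxreal:\Ominreal]^4\mid N_{K_0/\QQ}(\Delta_{K/K_0})\,[\Omax:\O']^2$; combined with the divisibility $[\Omax:\O']/[\Omaxreal:\Ominreal]\mid 2^{10}3^4$ of Theorem~\ref{thm:firstmaintheorem}, this produces the stated bound, including the exponent $2$ on the left-hand side. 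Your sketch has no substitute for this step, and the bounds you assert at $p\in\{2,3\}$ and at ramified $p\ge5$ are likewise unproved. The local targets you list are in fact all correct consequences of the paper's results, but the route you propose does not yet contain the two ideas (squares lie in the image of the reflex type norm; the trace-dual bound on the real index) that make the proof work.
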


Throughout this paper, we
let $\O$ and $\Oother$ be any two orders
stable under complex conjugation
in a quartic non-biquadratic CM-field~$K$.
We define $\Omin=\O\cap\Oother$
and denote the intersection of any of these orders
with the totally real subfield $K_0$
(which gives an order in $K_0$)
by adding a subscript $0$ to it.

\begin{theorem}\label{thm:firstmaintheorem}
Let the notation be as above.
If $\kernel{\O}\subset \kernel{\Oother}$ and $\O\not\cong \ZZ[\zeta_5]$,
then the quotient $[\O:\Omin]/[\Oreal:\Ominreal]$
is an integer dividing~$2^{10} 3^4$.
\end{theorem}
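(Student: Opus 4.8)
The plan is to reduce the statement to a family of local indices and then to obtain the constant by a valuation count. First, inside $\O$ form the $\ZZ$-module $\Omin+\Oreal$. Since $\Omin\subseteq\O$ one has $\Oreal\cap\Omin=K_0\cap\Omin=\Ominreal$, so $[\Omin+\Oreal:\Omin]=[\Oreal:\Ominreal]$ and therefore
\[
\frac{[\O:\Omin]}{[\Oreal:\Ominreal]}=[\O:\Omin+\Oreal],
\]
which in particular is automatically a positive integer. Writing $R_p=R\otimes_\ZZ\ZZ_p$, the Chinese remainder theorem gives $[\O:\Omin+\Oreal]=\prod_p[\O_p:\Omin_p+\Oreal_p]$. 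If $\O_p=\O'_p$ then $\Omin_p=\O_p$ and the local factor is $1$, so it suffices to prove that $[\O_p:\Omin_p+\Oreal_p]=1$ for every prime $p\ge5$ and that it divides $2^{10}$, resp.\ $3^4$, for $p=2$, resp.\ $p=3$, where now $\Omin_p=\O_p\cap\O'_p\subsetneq\O_p$.

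Next, following Section~\ref{sec:ideals-to-elements}, one recasts the hypothesis $S_\O\subseteq S_{\O'}$ without reference to the reflex field. Since $\norm_{\reflextype}(\aaa)\overline{\norm_{\reflextype}(\aaa)}=\norm_{\reflexfield/\QQ}(\aaa)\O$, the second coordinate of the pair in~\eqref{eq:reflexnormccc} is always a rational integer; so in asking an $\Omax$-ideal to become principal \emph{with a generator of that rational relative norm}, the generator may be changed only by a unit $u$ with $u\overline u=1$, i.e.\ by a root of unity lying in $\O$ — and for $K$ non-biquadratic with $\O\not\cong\ZZ[\zeta_5]$ the only roots of unity in $\O$ are $\pm1$. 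Thus the comparison takes place purely in the ``imaginary'' direction and never sees the conductor of $\Oreal$ inside $K_0$, which is exactly why the quantity controlled is the relative index $[\O:\Omin+\Oreal]$. Explicitly, $S_\O\subseteq S_{\O'}$ becomes the assertion that every invertible $\Omax$-ideal $\bbb$ coprime to $f$ with $\bbb\overline\bbb$ generated by a positive rational, which becomes principal in $\O$ with a generator of that relative norm, also does so in $\O'$. That this genuinely constrains $\O$ against $\O'$, and not merely the ideals in the image of the reflex type norm, is where the Chebotarev density theorem and primitivity of the CM-type (equivalently, $K$ non-biquadratic) come in: the reflex type norm reaches all such $\bbb$ up to a subgroup whose index is supported on $2$ and $3$, and this loss is folded into the final constant.

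Finally, one analyses the translated condition one prime at a time, which is the valuation work of Section~\ref{sec:valuation}. Decompose $K\otimes\QQ_p$ as a quadratic étale algebra over $K_0\otimes\QQ_p$ on which complex conjugation is the non-trivial involution; a conjugation-stable $\ZZ_p$-order is then pinned down by its relative conductor, and whether an ideal acquires a principal generator of rational relative norm in such an order is governed by the filtration of the relevant unit groups by the subgroups $1+\mmm^k$, together with the image of the relative norm on units. For $p\ge5$ these filtrations are tame and the relative norm is surjective onto the layers that matter, so the translated condition records relative position faithfully: it forces $\O_p$ and $\O'_p$ to be comparable, and combined with $\Omin_p=\O_p\cap\O'_p$ this yields $\O_p\subseteq\O'_p$, hence $\Omin_p=\O_p$ and $[\O_p:\Omin_p+\Oreal_p]=1$. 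For $p\in\{2,3\}$ the wild part of these filtrations — fed by wild ramification of $K/K_0$ above $p$, by the $p$-power roots of unity that can occur, and by the defect of surjectivity of the reflex type norm — spoils faithfulness by a bounded amount; estimating that amount by the length of the wild filtration produces the exponents $10$ at $2$ and $4$ at $3$. Multiplying the local factors gives $[\O:\Omin]/[\Oreal:\Ominreal]\mid 2^{10}3^4$.

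The reductions above are essentially routine; the genuine difficulty — and the step that decides whether the constant is $2^{10}3^4$ or something larger — is this dyadic and triadic valuation count. One must keep simultaneous track of the behaviour of the norm map on the higher unit groups of a wildly ramified local order, of the $2$- and $3$-power roots of unity present, and of the index by which the reflex type norm fails to be onto, and verify that their combined contribution never exceeds $2^{10}$ at $2$ and $3^4$ at $3$.
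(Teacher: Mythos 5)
Your opening reduction is fine: $[\O:\Omin]/[\Oreal:\Ominreal]=[\O:\Omin+\Oreal]$ is an integer and the problem localises prime by prime. After that, however, the proposal diverges from anything that can be made to work, in three places. First, the translation of $S_\O\subset S_{\O'}$ into a statement internal to $K$ rests on your assertion that the reflex type norm hits everything ``up to a subgroup whose index is supported on $2$ and $3$'', to be proved by Chebotarev. What is actually true, and what the whole argument hinges on, is Lemma~\ref{lemma:squares}: every \emph{square} in $\CCC(\Omin)$ lies in the image of \eqref{eq:reflexnormccc}, proved by the purely algebraic identity \eqref{eq:reflex-norm-square} (no density theorem). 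This is what converts $S_\O\subset S_{\O'}$ into the statement that the kernel of the relative norm $\psi$ in \eqref{eq:kernel} has exponent at most two (Proposition~\ref{prop:kernel}, via the snake-lemma identification $\ker\phi=\ker\psi$ of Lemma~\ref{lem:kernels-contained}). You never produce this exponent-two condition, and without it nothing downstream has a hypothesis to work with.

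Second, and fatally, your claim that for $p\ge5$ the hypothesis forces $\O_p\subseteq\O'_p$ (hence $\Omin_p=\O_p$) is false. Example~\ref{ex:realindexrequired} with $F=p\ge5$ gives orders with $S_\O=S_{\O'}$ and $[\O':\O]=p$; applying your claim to the inclusion $S_{\O'}\subset S_\O$ would force $\O'_p\subseteq\O_p$, a contradiction. The true statement for $p\ge 5$ is only $\val_p[\O:\Omin]=\val_p[\Oreal:\Ominreal]$, i.e.\ $[\O_p:\Omin_p+\Oreal_p]=1$, and it is not obtained by ``faithfulness of the norm on tame layers'': the mechanism (Proposition~\ref{prop:valuation}) is that the prime-to-$p$ part of the codomain of $\psi$ is cyclic of order dividing $p-1$ or $p+1$ because $\Oreal$ is quadratic, while the prime-to-$p$ cyclic factors of the domain have orders $p^f-1$ or $(p^g-1)/(p^h-1)$, and for $p\ge5$ such a factor can map with $2$-torsion kernel into the codomain only if it is a single factor of order exactly $p\pm1$; matching the two sides then equates the valuations. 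Third, the constants $2^{10}$ and $3^4$ are asserted, not derived; ``the length of the wild filtration'' is not where they come from. They come from Lemma~\ref{lem:valuation}: Proposition~\ref{prop:weak} bounds $\#\ker\psi\ge p^v(1-1/p)^6/\#(\mu_\O/\{\pm1\})$ from below, while exponent $\le2$ together with the kernel being generated by at most four elements bounds it above by $2^4$, and solving $p^{v-6}(p-1)^6\le 2^4$ gives $v\le10$ at $p=2$ and $v\le4$ at $p=3$. As it stands the proposal is a plausible-sounding outline whose three load-bearing steps are respectively unproved, false, and missing.
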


We do not claim that the constants are optimal, but we do show in
Example~\ref{ex:realindexrequired} that the index $[\Oreal:\Ominreal]$ is
necessary in Theorem~\ref{thm:firstmaintheorem}. On the other hand, it remains
an open question whether the discriminant factor is necessary in
Theorem~\ref{thm:relativeindex}.

For the excluded case $\O=\ZZ[\zeta_5]$,
we show
in Section~\ref{sec:applicationwamelen}
that if $\kernel{\ZZ[\zeta_5]}\subset \kernel{\Oother}$,
then $[\ZZ[\zeta_5]:\Oother]$ divides one of
$2^4$, $3^2$ and $5^2$.

\theoremstyle{plain}
\newtheorem*{maintheoremgeneral}{{Theorem~\ref{thm:firstmaintheorem}}}
\newtheorem*{maintheoremmaximal}{{Theorem~\ref{thm:relativeindex}}}

\section{From ideals to elements}\label{sec:ideals-to-elements}

If $\O$ and $\Oother$ are two orders of a quartic non-biquadratic
CM-field~$K$,
we obviously have the
implication $\O\subset\Oother\Rightarrow \kernel{\O}\subset \kernel{\Oother}$. The goal of
Section~\ref{sec:valuation}
is to establish a partial converse to this statement, that is, to obtain an
explicit condition on $\O$ and $\Oother$ necessary for $\kernel{\O}\subset \kernel{\Oother}$ to
hold.
To help with that, we first express some relevant groups in terms of ring
\emph{elements}, as opposed to ideals.
Recall that $\Omin$ denotes the intersection of $\O$ and~$\Oother$,
and fix $f=[\Omax:\Omin]$ once and for all.

\begin{lemma}\label{lemma:squares}
All squares of elements of
$\CCC(\Omin)$ are in the image of 
the type norm map~\eqref{eq:reflexnormccc}.
\end{lemma}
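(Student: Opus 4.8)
The plan is to reduce the statement to an identity for the reflex type norm and then prove that identity by a direct computation with the Galois action on ideals of the normal closure $K^c$ of $K$. First I would turn ``square'' into ``product with the complex conjugate''. If $(\aaa,\alpha)\in I_{\Omin}$, then $\aaa\overline\aaa=\alpha\Omin$ with $\alpha$ totally positive in $K_0$, so $(\aaa,\alpha)(\overline\aaa,\alpha)=(\alpha\Omin,\alpha\overline\alpha)\in P_{\Omin}$; hence $(\overline\aaa,\alpha)=(\aaa,\alpha)^{-1}$ in $\CCC(\Omin)$, and therefore
\[
(\aaa,\alpha)^2=(\aaa,\alpha)(\overline\aaa,\alpha)^{-1}=(\aaa\overline\aaa^{-1},1)\qquad\text{in }\CCC(\Omin).
\]
It thus suffices to show that the class of $(\aaa\overline\aaa^{-1},1)$ lies in the image of~\eqref{eq:reflexnormccc} for every invertible ideal $\aaa$ of $\Omin$ coprime to $f$.

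Next I would exhibit a preimage. As $f$-coprime invertible ideals of $\Omin$ and of $\Omax$ correspond, I may work with $\aaa\Omax$; set $\bbb=\norm_\Phi(\aaa\Omax)$, the type norm of $\aaa\Omax$ with respect to the fixed CM-type $\Phi$, a fractional $\O_{\reflexfield}$-ideal coprime to $f$ (its prime factors divide $\norm_{K/\QQ}(\aaa)$), determined by $\bbb\,\O_{K^c}=\prod_{\phi\in\Phi}\phi(\aaa\Omax)\,\O_{K^c}$. The map~\eqref{eq:reflexnormccc} sends $\bbb$ to $\bigl(\norm_{\reflextype}(\bbb),\norm_{\reflexfield/\QQ}(\bbb)\bigr)$, and the lemma reduces to the two identities
\[
\norm_{\reflexfield/\QQ}\bigl(\norm_\Phi(\aaa\Omax)\bigr)=\norm_{K/\QQ}(\aaa)^2,\qquad
\norm_{\reflextype}\bigl(\norm_\Phi(\aaa\Omax)\bigr)=\norm_{K/\QQ}(\aaa)\cdot\aaa\overline\aaa^{-1}\,\Omax.
\]
Indeed, with $m=\norm_{K/\QQ}(\aaa)\in\QQ^\times$ these express $\bigl(\norm_{\reflextype}(\bbb),\norm_{\reflexfield/\QQ}(\bbb)\bigr)$ as $(m\Omax,m\overline m)\cdot(\aaa\overline\aaa^{-1},1)$ with $(m\Omax,m\overline m)\in P_{\Omax}$; transporting back along $\Omax\leftrightarrow\Omin$ then identifies the class of $\bbb$ with that of $(\aaa\overline\aaa^{-1},1)=(\aaa,\alpha)^2$.

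Finally I would prove the two identities. The first follows from $\norm_\Phi(\aaa\Omax)\,\overline{\norm_\Phi(\aaa\Omax)}=\norm_{K/\QQ}(\aaa)\,\O_{\reflexfield}$, which holds because $\Phi$ and its conjugate together form the full set of embeddings of $K$; this gives $\norm_{\reflexfield/(\reflexfield\cap\RR)}\bigl(\norm_\Phi(\aaa\Omax)\bigr)=\norm_{K/\QQ}(\aaa)\,\O_{\reflexfield\cap\RR}$, and applying the norm of the real quadratic field $\reflexfield\cap\RR$ over $\QQ$ squares the rational integer $\norm_{K/\QQ}(\aaa)$. The second identity is the crux, and I would prove it by a direct computation in the normal closure. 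Since $K$ is non-biquadratic, $G=\Gal(K^c/\QQ)$ is cyclic of order $4$ or dihedral of order $8$; let $\sigma$ generate the unique cyclic subgroup of $G$ of order $4$, so $\sigma^2$ is complex conjugation and $\sigma$ fixes $K_0$ setwise and restricts to the generator of $\Gal(K_0/\QQ)$. Expressing the CM-type $\Phi$ and the reflex type $\reflextype$ through coset representatives of $\Gal(K^c/K)$, resp.\ $\Gal(K^c/\reflexfield)$, in $G$ and multiplying out yields, in both cases,
\[
\norm_{\reflextype}\bigl(\norm_\Phi(\aaa\Omax)\bigr)\,\O_{K^c}=\aaa^2\,\sigma(\aaa)\,\sigma^3(\aaa)\,\O_{K^c}=\aaa^2\,\sigma(\aaa\overline\aaa)\,\O_{K^c};
\]
since $\aaa\overline\aaa=\norm_{K/K_0}(\aaa)\,\Omax$ and $\sigma$ sends the $K_0$-ideal $\norm_{K/K_0}(\aaa)$ to $\norm_{K/\QQ}(\aaa)\,\norm_{K/K_0}(\aaa)^{-1}$, the right-hand side descends to $\norm_{K/\QQ}(\aaa)\,\aaa\overline\aaa^{-1}\,\Omax$, as required.

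The step I expect to be the main obstacle is this last computation: correctly determining $\reflextype$ and bookkeeping the $G$-action on ideals, especially in the dihedral case, where one must keep left and right cosets apart. Verifying that $\bbb$ remains coprime to $f$, and that $f$-coprime ideals of $\Omin$ and $\Omax$ may be identified, is routine.
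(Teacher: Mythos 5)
Your proof is correct and takes essentially the same route as the paper: both identify the class of $(\aaa,\alpha)^2$ with the image of $\norm_\Phi(\aaa)$ under \eqref{eq:reflexnormccc}, using the identity $\norm_{\reflextype}(\norm_\Phi(\aaa))=\aaa^2(\aaa\overline\aaa)^\sigma$ (your form $\norm_{K/\QQ}(\aaa)\,\aaa\overline\aaa^{-1}$ is equivalent, since $(\aaa\overline\aaa)^\sigma=\norm_{K/\QQ}(\aaa)\,(\aaa\overline\aaa)^{-1}$) together with $\norm_{\reflexfield/\QQ}(\norm_\Phi(\aaa))=\norm_{K/\QQ}(\aaa)^2$. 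The only difference is that the paper cites this type-norm identity from the literature, whereas you prove it directly by the Galois computation in $K^c$, which you carry out correctly in both the cyclic and dihedral cases.
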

\begin{proof}
The key to this result is the following observation, which is also 
\cite[Lemma 2.6]{streng-reciprocity}
and, in the case of maximal orders, \cite[Lemma I.8.4]{streng-thesis}.
Let $\tau=\rho_{|K_0}$ be the nontrivial automorphism of~$K_0$.
For every invertible $\O$-ideal $\aaa$, we have
$N_{\reflextype}(N_{\Phi}(\aaa)) = \aaa^2\tau (\aaa\overline{\aaa})$.
The proof of this follows directly from Examples \ref{ex:concretecm1}
and~\ref{ex:concretecm2} since
\[ N_{\reflextype}(N_{\Phi}(\aaa)) = {}^{(1+\rho^{-1})\phi^{-1}}({}^{\phi(1+\rho)}\aaa)
= {}^{(2+\rho^{-1}+\rho)}\aaa = \tau(\aaa\overline{\aaa})\aaa^2.\]

  For any $(\aaa, \alpha)\in I_{\Omin}$ we have
\begin{equation}\label{eq:reflex-norm-square}
    (\aaa, \alpha)^2 = (\tau(\alpha)\Omin, (\tau(\alpha))^2)^{-1}
                 (\aaa^2\tau(\aaa\overline{\aaa}), N_{K_0/\QQ}(\alpha)^2).
\end{equation}
  The first factor is trivial in $\CCC(\Omin)$, and 
the second factor is obtained from $\aaa$ via the map
of~\eqref{eq:reflexnormccc}.
  This proves that the class of $(\aaa, \alpha)^2$ is in the image.
\end{proof}

By Lemma~\ref{lemma:squares}, a necessary condition for $\kernel{\O}\subset \kernel{\Oother}$ is
\begin{equation}\label{eq:kernels-contained}
\ker(\CCC(\Omin)^2\to\CCC(\O))\quad\subset\quad\ker(\CCC(\Omin)^2\to\CCC(\Oother)),
\end{equation}
and, to write these kernels in terms of ring elements,
we use the following lemma.

\begin{lemma}\label{lem:kernels-contained}
Denote by $\phi$ the natural morphism $\CCC(\Omin)\rightarrow \CCC(\O)$
and consider the relative norm
\begin{equation}\label{eq:psi}
 \psi: \frac{(\O / f \Omax)^\times}{(\Omin / f \Omax)^\times \mu_{\O}}
\longrightarrow \frac{(\Oreal / f \Omaxreal)^\times}{(\Ominreal / f \Omaxreal)^\times}.
\end{equation}
We have $\ker\phi = \ker\psi$ and $\coker\phi\subset \coker\psi$.
\end{lemma}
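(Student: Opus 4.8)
The plan is to give an explicit description of the natural morphism $\phi\colon\CCC(\Omin)\to\CCC(\O)$ in terms of unit groups modulo $f\Omax$, and then to match kernel and cokernel with those of the norm map $\psi$. First I would recall that, by the coprime-to-$f$ conventions set up in Section~\ref{sec:intro} and the bijection $\aaa\mapsto\aaa\O$ between invertible ideals of $\Omin$ and of $\O$ coprime to $f$, the map $I_{\Omin}\to I_{\O}$ is actually an isomorphism; so an element of $I_\Omin$ maps to $P_\O$ if and only if the corresponding ideal of $\O$ is principal, generated by some $x\in K^\times$ with $x\overline x=\alpha$. Thus the kernel of $\phi$ is measured by those pairs $(\aaa,\alpha)\in I_\Omin$ that become principal in $\O$ but need not be principal in $\Omin$: writing $\aaa\O=y\O$ for $y\in K^\times$, the obstruction is $y$ modulo $\Omin^\times$, subject to the compatibility $y\overline y/\alpha\in\O^\times$. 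Localising at $f$ — i.e. using that $\aaa$ and $\aaa\O$ coprime to $f$ means everything is controlled in $\O/f\Omax$ — turns this into a statement about $(\O/f\Omax)^\times$ modulo the image of $(\Omin/f\Omax)^\times$ and of the roots of unity $\mu_\O$ coming from the polarisation condition. This is exactly the source group of $\psi$.

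Next I would carry out the norm computation that produces $\psi$ itself. Given $y\in\O^\times$ (localised, so really in $(\O/f\Omax)^\times$) the element $y\overline y$ lies in $K_0$ and is totally positive, and reducing mod $f\Omaxreal$ gives an element of $(\Oreal/f\Omaxreal)^\times$; this is the relative norm $\operatorname{N}_{K/K_0}$ appearing in \eqref{eq:psi}. The key point is that $(\aaa,\alpha)$ dies in $\CCC(\O)$ precisely when we can choose the generator $y$ of $\aaa\O$ so that $y\overline y=\alpha$ on the nose; the freedom in choosing $y$ is $\Omin^\times$ (those $y$ already principal-with-generator in $\Omin$) together with $\mu_\O$ (units fixed by the polarisation normalisation), and the image of this freedom under $\operatorname{N}_{K/K_0}$ is the denominator $(\Ominreal/f\Omaxreal)^\times$ on the target. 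Chasing these identifications shows an element of $\CCC(\Omin)$ lies in $\ker\phi$ exactly when the associated class in the source of $\psi$ lies in $\ker\psi$, giving $\ker\phi=\ker\psi$; and the same bookkeeping shows that the image of $\phi$ corresponds to the part of the target of $\psi$ hit by actual norms of ideal classes, so that $\coker\phi$ embeds into $\coker\psi$ (one only gets an inclusion here because not every element of $(\Oreal/f\Omaxreal)^\times$ extending over the norm need come from a global ideal class, whereas every obstruction to surjectivity of $\phi$ does give one for $\psi$).

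The main obstacle I expect is the careful translation between the ideal-theoretic picture and the local unit picture: namely verifying that "$\aaa$ becomes principal in $\O$ and the generator can be adjusted to satisfy the polarisation condition" is faithfully captured by $(\O/f\Omax)^\times$ modulo $(\Omin/f\Omax)^\times\mu_\O$, with no loss or spurious contribution. Two subtleties need attention: first, that restricting to ideals coprime to $f$ really does lose nothing (which is asserted in Section~\ref{sec:intro} and can be invoked), so that conductor issues between $\Omin$ and $\Omax$ are confined to $(\O/f\Omax)^\times$; and second, the precise role of $\mu_\O$ — the roots of unity enter because two generators of the same principal ideal with the same $x\overline x$ differ by a unit whose norm to $K_0$ is $1$, i.e. by a root of unity, so quotienting by $\mu_\O$ on the source exactly matches quotienting the target by the image $(\Ominreal/f\Omaxreal)^\times$ of genuinely "old" units. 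Once these are pinned down, the equality $\ker\phi=\ker\psi$ and the inclusion $\coker\phi\subset\coker\psi$ follow by a direct diagram chase; the asymmetry between $=$ and $\subset$ is itself a clue that the argument should be organised around a map of short exact sequences that is an isomorphism on one outer term and only injective on the relevant quotient.
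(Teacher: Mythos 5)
Your argument breaks at its very first step: the map $I_{\Omin}\to I_{\O}$ is \emph{not} an isomorphism in general, and the lemma would be false under that assumption. The bijection $\aaa\mapsto\aaa\O$ recalled in Section~\ref{sec:preliminaries} is between invertible \emph{ideals} coprime to $f$, but $I_{\O}$ consists of \emph{pairs} $(\aaa,\alpha)$ subject to $\aaa\overline{\aaa}=\alpha\O$ with $\alpha\in K_0$ totally positive. Given $(\aaa,\alpha)\in I_{\O}$ and the unique $\Omin$-ideal $\bbb$ coprime to $f$ with $\bbb\O=\aaa$, one only gets $\bbb\overline{\bbb}\,\O=\alpha\O$; membership $(\bbb,\alpha)\in I_{\Omin}$ requires $\bbb\overline{\bbb}=\alpha\Omin$, which forces $\alpha\Omin$ to be coprime to $f$, and this can fail. (Concretely, $(\O,\alpha)\in I_{\O}$ for any totally positive $\alpha\in\Oreal^{\times}$, but such a pair lies in the image of $I_{\Omin}$ only if $\alpha\in\Ominreal^{\times}$.) This failure of surjectivity is not a technicality: the quotient $I_{\O}/I_{\Omin}$ embeds into $(\Oreal/f\Omaxreal)^\times/(\Ominreal/f\Omaxreal)^\times$ via $(\aaa,\alpha)\mapsto\alpha$, and that embedding is precisely what produces the codomain of $\psi$ and the inclusion $\coker\phi\subset\coker\psi$. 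Note also that your premise contradicts the conclusion you are trying to prove: if $I_{\Omin}\to I_{\O}$ were an isomorphism, the snake lemma would give $\ker\phi\cong P_{\O}/P_{\Omin}$, i.e.\ the \emph{whole} domain of $\psi$, whereas $\ker\psi$ is a proper subgroup of it whenever $\psi$ is nontrivial (as it is, for instance, in Example~\ref{ex:realindexrequired}).

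The intended argument applies the snake lemma to the two exact sequences $1\to P_{*}\to I_{*}\to\CCC(*)\to 1$ for $*\in\{\Omin,\O\}$, where the two left-hand vertical maps are injective but \emph{not} surjective; this yields $\ker\phi=\ker\bigl(P_{\O}/P_{\Omin}\to I_{\O}/I_{\Omin}\bigr)$ and likewise for cokernels. One then identifies $P_{\O}/P_{\Omin}$ with the domain of $\psi$ by $(x\O,x\overline{x})\mapsto x$ (an isomorphism, the quotient by $\mu_{\O}$ accounting exactly for the ambiguity of the generator $x$) and embeds $I_{\O}/I_{\Omin}$ into the codomain by $(\aaa,\alpha)\mapsto\alpha$ (only injective), so that the induced map is the relative norm $x\mapsto x\overline{x}$. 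The asymmetry between the isomorphism on the $P$-side and the mere injection on the $I$-side is the sole source of the ``$=$'' for kernels versus ``$\subset$'' for cokernels; your closing remark correctly anticipates this asymmetry, but your identification of the target denominator $(\Ominreal/f\Omaxreal)^\times$ as the image of the unit freedom under the norm is also off --- it appears because it is the subgroup of $\alpha$'s realised by pairs already lying in $I_{\Omin}$, not because of any unit normalisation.
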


To prove the above we first require a simple technical result.

\begin{lemma}\label{lem:coprime}
An ideal $\aaa$ of $\Omin$ is coprime to $f$ (that is, it satisfies
$\aaa+f\Omin=\Omin$) if and only if it satisfies $\aaa+f\Omax=\Omin$.
\end{lemma}

\begin{proof}
Recall that $f\Omax\subset\Omin$. The ``only if'' part is trivial. Now for
the converse, suppose that $\aaa$ satisfies $\aaa+f\Omax=\Omin$; it follows
that $(\aaa+f\Omax)^2=\Omin$. Since $(\aaa+f\Omax)^2\subset \aaa+(f\Omax)^2$
and $(f\Omax)^2\subset f\Omin$, we deduce that $\aaa+f\Omin=\Omin$.
\end{proof}

We may now prove Lemma~\ref{lem:kernels-contained} and make free and implicit
use of Lemma~\ref{lem:coprime}.

\begin{proof}[Proof of Lemma~\ref{lem:kernels-contained}.]
Recall $\Oreal=\O\cap K_0$ and $\Ominreal=\Omin\cap K_0$.
The diagram
\[
\xymatrix{
1 \ar[r] & P_{\Omin} \ar[r]\ar@{^{(}->}[d]
                  & I_{\Omin} \ar[r]\ar@{^{(}->}[d]
                           & \CCC(\Omin) \ar[r]\ar_{\phi}[d] & 1\\
1 \ar[r] & P_{\O} \ar[r]
                  & I_{\O} \ar[r]
                           & \CCC(\O) \ar[r] & 1
}
\]
has exact rows and, as we restrict to ideals coprime to $f$,
its two leftmost vertical arrows are injective.
The snake lemma thus tells us that
\[\optcoker\phi = \optcoker \left(\frac{P_{\O}}{P_{\Omin}}\rightarrow \frac{I_{\O}}{I_{\Omin}}\right).\]
It now suffices to give a natural isomorphism and an embedding
\[\frac{P_{\O}}{P_{\Omin}} =
\frac{(\O / f \Omax)^\times}{(\Omin / f \Omax)^\times \mu_{\O}},
\quad\quad
\frac{I_{\O}}{I_{\Omin}} \hookrightarrow
\frac{(\Oreal / f \Omaxreal)^\times}{(\Ominreal / f \Omaxreal)^\times}\]
such that the induced map is~$\psi$. For these maps, we take
$(x\O, x\overline{x})\mapsto x$
(for integral representatives $(x\O, x\overline{x})$
of elements of $P_{\O}/P_{\Omin}$)
 and $(\aaa, \alpha)\mapsto \alpha$
(for integral representatives $(\aaa,\alpha)$
of elements of $I_{\O}/I_{\Omin}$).

On $P_{\O}$, the first map is well-defined, as $(x\O, x\overline{x})$
determines $x$ up to roots of unity in~$\O$.
The kernel then consists of those pairs $(x\O, x\overline{x})$
with $x$ in $\Omin$ invertible modulo~$f$,
that is, such that
$x\Omin$ is coprime to $f$.
In other words, the kernel is $P_{\Omin}$,
so the map is indeed well-defined and
injective on $P_\O/P_\Omin$. Surjectivity is obvious.

The second map is well-defined on $I_\O/I_\Omin$.
Now suppose that $(\aaa, \alpha)$ is in the kernel,
again without loss of generality with~$\aaa$ integral.
Then
$\alpha\Ominreal$ is coprime to~$f$, so
$\alpha\Omin$ is also coprime to~$f$.
Denoting by $\bbb$ the
unique $\Omin$-ideal
coprime to~$f$
satisfying $\aaa = \bbb\O$,
we have $\bbb\overline{\bbb}\O = \alpha\O$
and, as $\alpha\Omin$ and $\bbb$ are coprime to~$f$, we find
$(\bbb,\alpha)\in I_{\Omin}$ which implies
$(\aaa,\alpha)\in I_{\Omin}$ by abuse of notation.
This proves injectivity.

Finally, the induced map $\psi$ is indeed the relative norm,
as $x$ maps via $(x\O,x\overline{x})$ to $x\overline{x}$.
\end{proof}

Using Lemma~\ref{lem:kernels-contained},
the necessary condition \eqref{eq:kernels-contained} becomes as follows.

\begin{proposition}\label{prop:kernel}
Let $\O$ and $\Oother$ be orders in a quartic non-biquadratic CM-field~$K$
and let $\Omin=\Oother\cap \O$.
If we have $\kernel{\O}\subset \kernel{\Oother}$,
then the kernel
\begin{equation}\label{eq:kernel}
\ker \left(\psi : \frac{(\O/f\Omax)^\times}{(\Omin/f\Omax)^\times\mu_\O}
\rightarrow
\frac{(\Oreal / f \Omaxreal)^\times}{(\Ominreal / f \Omaxreal)^\times}
\right)\end{equation}
is of exponent at most two.
\end{proposition}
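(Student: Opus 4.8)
The plan is to push the hypothesis through Lemmas~\ref{lemma:squares} and~\ref{lem:kernels-contained} and then to reduce to an injectivity statement about the three orders. Write $\phi\colon\CCC(\Omin)\to\CCC(\O)$ and $\phi'\colon\CCC(\Omin)\to\CCC(\O')$ for the natural morphisms. By Lemma~\ref{lemma:squares} the hypothesis $S_\O\subset S_{\O'}$ gives~\eqref{eq:kernels-contained}, i.e.\ $\ker\phi\cap\CCC(\Omin)^2\subseteq\ker\phi'$. Hence for every $c\in\ker\phi$ the element $c^2$ lies in $\ker\phi\cap\CCC(\Omin)^2$, and therefore in $\ker\phi'$, so $c^2\in\ker\phi\cap\ker\phi'$. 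By Lemma~\ref{lem:kernels-contained} the group in~\eqref{eq:kernel} is precisely $\ker\phi$, so the proposition follows once we show that $\ker\phi\cap\ker\phi'=\{1\}$, equivalently that the natural map $\CCC(\Omin)\to\CCC(\O)\times\CCC(\O')$ is injective.

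To prove this injectivity I would use crucially that $\Omin=\O\cap\O'$. Let $c\in\ker\phi\cap\ker\phi'$ be represented by $(\aaa,\alpha)$ with $\aaa\subseteq\Omin$ invertible and coprime to $f$; triviality of $c$ in $\CCC(\O)$ and in $\CCC(\O')$ yields $x\in\O$ and $x'\in\O'$ with $\aaa\O=x\O$, $\alpha=x\overline x$ and $\aaa\O'=x'\O'$, $\alpha=x'\overline{x'}$, and the coprimality of $\aaa$ to $f$ forces $x\in(\O/f\Omax)^\times$ and $x'\in(\O'/f\Omax)^\times$. Running the snake lemma for $\Omin\subseteq\Omax$ exactly as in the proof of Lemma~\ref{lem:kernels-contained} (with $\O$ replaced by $\Omax$) identifies $\ker(\CCC(\Omin)\to\CCC(\Omax))$, hence its subgroups $\ker\phi$ and $\ker\phi'$, with subgroups of $(\Omax/f\Omax)^\times/\bigl((\Omin/f\Omax)^\times\mu_\Omax\bigr)$; under this identification $c$ is represented both by the class of $x$ and by the class of $x'$, so it lies in the images of $(\O/f\Omax)^\times$ and of $(\O'/f\Omax)^\times$ simultaneously. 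Since $f\Omax$ is contained in each of $\Omin$, $\O$ and $\O'$, and since an element of any of these orders is a unit modulo $f\Omax$ exactly when its norm is, the equality $\Omin=\O\cap\O'$ gives $(\O/f\Omax)^\times\cap(\O'/f\Omax)^\times=(\Omin/f\Omax)^\times$, which is trivial in the quotient above; hence $c=1$, so $c^2=1$ for every $c\in\ker\phi$, as claimed.

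The step I expect to be the main obstacle is the very last one, because of the interplay with roots of unity. The generator $x$ of $\aaa\O$ is only determined up to the (finite) group of norm-one units of $\O$, i.e.\ up to $\mu_\O$, and similarly $x'$ up to $\mu_{\O'}$, while the snake-lemma quotient is taken modulo $\mu_\Omax$; so to deduce that the common class of $x$ and $x'$ is trivial one really has to compare $(\O/f\Omax)^\times\cap(\O'/f\Omax)^\times$ with $(\Omin/f\Omax)^\times$ \emph{together with} the root-of-unity subgroups, and to keep the polarisation $\alpha$ matched throughout. I would carry this out prime by prime, using $\Omin\otimes\ZZ_p=(\O\otimes\ZZ_p)\cap(\O'\otimes\ZZ_p)$ for $p\mid f$ and the norm criterion for being a unit; the rest is routine bookkeeping with the ideal bijection of Section~\ref{sec:preliminaries} and the snake lemma. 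Note that the conclusion can only be ``exponent at most two'': one obtains $c^2=1$ for each $c$, not triviality of $\ker\phi=\ker\psi$, which may be large.
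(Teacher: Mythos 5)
Your argument is correct and follows the paper's own proof essentially step for step: Lemma~\ref{lemma:squares} gives \eqref{eq:kernels-contained}, whence $\ker\phi^2\subset\ker\phi\cap\ker\phi'$, and Lemma~\ref{lem:kernels-contained} together with $(\O/f\Omax)^\times\cap(\O'/f\Omax)^\times=(\Omin/f\Omax)^\times$ reduces everything to the triviality of that intersection modulo $(\Omin/f\Omax)^\times$. The root-of-unity bookkeeping you flag at the end is indeed the one delicate point, but the paper elides it just as briefly, remarking only after the proposition that $\mu_\O=\{\pm1\}$ can be absorbed into $(\Omin/f\Omax)^\times$ unless $\O\cong\ZZ[\zeta_5]$.
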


\begin{proof}

By \eqref{eq:kernels-contained}, if $\kernel{\O}\subset \kernel{\Oother}$, then we have
\begin{align*}\ker\left(\CCC(\Omin)\to\CCC(\O)\right)^2
&\subset
\ker\left(\CCC(\Omin)^2\to\CCC(\O)\right)\\
&\subset
\ker\left(\CCC(\Omin)^2\to\CCC(\Oother)\right)
\subset
\ker\left(\CCC(\Omin)\to\CCC(\Oother)\right),\end{align*}
hence
\begin{equation}\label{eq:squaresofkernels}
\ker\left(\CCC(\Omin)\to\CCC(\O)\right)^2
\quad\subset\quad \ker\left(\CCC(\Omin)\to\CCC(\Oother)\right)\cap
\ker\left(\CCC(\Omin)\to\CCC(\O)\right).
\end{equation}

We wish to apply Lemma~\ref{lem:kernels-contained} to both $\O$ and $\Oother$ and compare the results,
so we need some common supergroup for \[\frac{(\O/f\Omax)^\times}{(\Omin/f\Omax)^\times\mu_{\O}}\quad
\mbox{and}\quad \frac{(\Oother/f\Omax)^\times}{(\Omin/f\Omax)^\times\mu_{\Oother}}\]
to compare them in.
We start by proving that the natural map 
\[ \iota_\O:\frac{(\O/f\Omax)^\times}{(\Omin/f\Omax)^\times\mu_{\O}}\rightarrow \frac{(\Omax/f\Omax)^\times}{(\Omin/f\Omax)^\times\mu_{\Omax}}\]
is injective and compatible with the natural isomorphisms to $P_{\O}/P_{\Omin}$ and $P_{\Omax}/P_{\Omin}$
of the proof of Lemma~\ref{lem:kernels-contained}.
The latter is trivial, as we take ideals coprime to $f\Omax$ in~$K$.
The former then follows by injectivity of the inclusion $P_{\O}/P_{\Omin}\subset P_{\Omax}/P_{\Omin}$.
%

Next, note that we have $\mathrm{im}(\iota_\O)\cap\mathrm{im}(\iota_{\Oother}) = 1$.
Indeed, if $\overline{x}=(x+f\Omax)\in(\Omax/f\Omax)^*$ represents
an element $X$ of the intersection, then $\overline{x} \in (\O+f\Omax)^*$ and $\overline{x}\in (\Oother+f\Omax)^*$,
so $\overline{x}\in (S+f\Omax)^*$, hence $X$ is trivial.

Applying Lemma~\ref{lem:kernels-contained} to both~$\O$ and~$\Oother$,
equation~\eqref{eq:squaresofkernels} becomes
\begin{align*}
& \ker\left(\frac{(\O/f\Omax)^\times}{(\Omin/f\Omax)^\times\mu_\O}
\rightarrow 
\frac{(\Oreal / f \Omaxreal)^\times}{(\Ominreal / f \Omaxreal)^\times}
\right)^2
\\
\subset \quad &
\ker \left(\mathrm{im}(\iota_\O)
\rightarrow 
\frac{(\Omaxreal / f \Omaxreal)^\times}{(\Ominreal / f \Omaxreal)^\times}
\right)\cap
\ker \left(\mathrm{im}(\iota_\Oother)
\rightarrow 
\frac{(\Omaxreal / f \Omaxreal)^\times}{(\Ominreal / f \Omaxreal)^\times}
\right)
\\
\subset \quad &
\ker \left(\left(\mathrm{im}(\iota_\O) \cap \mathrm{im}(\iota_\Oother)\right)
\rightarrow 
\frac{(\Omaxreal / f \Omaxreal)^\times}{(\Ominreal / f \Omaxreal)^\times}
\right) = 1.\qedhere
\end{align*}

\end{proof}

Note that, unless $\O\cong \ZZ[\zeta_5]$, the unit group $\mu_\O = \{\pm 1\}$ can
be absorbed into $(\Omin / f\Omax)^\times$.

\section{Explicit bounds}\label{sec:valuation}

We shall now derive from Proposition~\ref{prop:kernel} a more explicit
necessary condition for $\kernel{\O}\subset \kernel{\Oother}$ on the indices of the relevant
orders. First, let us give a weak but natural result bounding the size of the
quotient groups of \eqref{eq:kernel} in terms of these indices.

\begin{proposition}\label{prop:weak}
Let $\Omin\subset\O$ be two orders in a number field $K$ of degree $n$,
and let $f$ be a multiple of the index $[\O:\Omin]$; we have
\[
[\O:\Omin]\prod_{p\mid f}(1-1/p)^n
\leq
\left|\frac{(\O/f\O)^\times}{(\Omin/f\O)^\times}\right|
\leq
[\O:\Omin]\prod_{p\mid f}(1-1/p)^{-n}.
\]
\end{proposition}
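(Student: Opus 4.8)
The plan is to reduce the statement to the elementary identity that, for any finite commutative ring $R$, one has
\[
\frac{|R^\times|}{|R|}\ =\ \prod_{\mmm}\Bigl(1-\frac{1}{|R/\mmm|}\Bigr),
\]
the product ranging over the finitely many maximal ideals $\mmm$ of $R$, and then to bound this correction factor for $R=\O/f\O$ and for $R=\Omin/f\O$ between $\prod_{p\mid f}(1-1/p)^{n}$ and $1$.

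First I would record the preliminary fact that $f\O\subseteq\Omin$: the index $[\O:\Omin]$ annihilates the finite abelian group $\O/\Omin$, so $[\O:\Omin]\,\O\subseteq\Omin$, and $[\O:\Omin]$ divides $f$. Hence $f\O$ is also an ideal of $\Omin$, the ring $\Omin/f\O$ is a subring of $\O/f\O$ with the same identity, the natural map $\Omin/f\O\to\O/f\O$ is injective, and a unit of $\Omin/f\O$ remains a unit of $\O/f\O$; thus $(\Omin/f\O)^\times$ is a subgroup of the abelian group $(\O/f\O)^\times$, and the quotient in the statement has order $|(\O/f\O)^\times|/|(\Omin/f\O)^\times|$. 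Also $|\O/f\O|=[\O:\Omin]\cdot|\Omin/f\O|$.

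For the ring identity, I would use that a finite commutative ring $R$ is a product of finite local rings $R_i$, for which $|R_i^\times|=|R_i|-|\mmm_i|=|R_i|(1-1/|k_i|)$ with $k_i=R_i/\mmm_i$ the residue field; taking the product over $i$ gives the identity above, the $k_i$ being exactly the residue fields at the maximal ideals of $R$. Now apply this with $R=\O/f\O$, resp.\ $R=\Omin/f\O$. Each maximal ideal of $R$ contains $f$, hence lies over a rational prime $p\mid f$; conversely, for every $p\mid f$ the ideal $p\O$ (resp.\ $p\O\cap\Omin$) is proper and contains $f\O$, so at least one maximal ideal of $R$ lies over $p$; and the maximal ideals of $R$ over a given $p$ are those of the $n$-dimensional $\FF_p$-algebra $R/pR$, of which there are at most $n$ (distinct maximal ideals are pairwise comaximal, so by the Chinese remainder theorem their number is at most $\dim_{\FF_p}R/pR=n$). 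Since every residue field has cardinality a power of $p$, hence at least $p$, we get for both choices of $R$
\[
\prod_{p\mid f}(1-1/p)^{n}\ \le\ \frac{|R^\times|}{|R|}\ \le\ 1.
\]

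Finally, writing $A=|(\O/f\O)^\times|/|\O/f\O|$ and $B=|(\Omin/f\O)^\times|/|\Omin/f\O|$, the index to be bounded equals $[\O:\Omin]\cdot A/B$; the upper bound follows from $A\le1$ together with $B\ge\prod_{p\mid f}(1-1/p)^{n}$, and the lower bound from $A\ge\prod_{p\mid f}(1-1/p)^{n}$ together with $B\le1$. I do not expect a real obstacle here: the only points needing care are identifying the primes that occur as characteristics of the residue fields (precisely those dividing $f$) and the uniform bound $1\le\#\{\mmm\mid p\}\le n$, which is exactly where the hypothesis that $\O$ may be non-maximal is absorbed without trouble.
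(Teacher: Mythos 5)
Your proof is correct and follows essentially the same route as the paper: decompose the finite quotient ring into local factors to get $|R^\times|=|R|\prod_{\mmm}(1-1/|R/\mmm|)$, then bound the correction factor using that there are at most $n$ maximal ideals above each rational prime $p\mid f$ and that every residue field has at least $p$ elements. The paper states this only for $\O/f\O$ and remarks that the other inequality "follows similarly"; you simply carry out both sides symmetrically, which is the same argument made explicit.
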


\begin{proof}
Decomposing the ring $\O/f\O$ over prime ideals $\ppp$ of $\O$ dividing $f$ yields
\[
\left|(\O/f\O)^\times\right| = \left|(\O/f\O)\right| \prod_\ppp(1-1/\norm(\ppp)).
\]
Since there are at most $n$ such $\ppp$ dividing each prime factor $p$ of~$f$, we derive
\[
\frac{\left|(\O/f\O)^\times\right|}{\left|(\Omin/f\O)^\times\right|}
\geq
\frac{\left|(\O/f\O)\right|}{\left|(\Omin/f\O)\right|} \prod_\ppp(1-1/\norm(\ppp))
\geq
[\O:\Omin]\prod_{p\mid f}(1-1/p)^n
\]
and the second inequality follows similarly.
\end{proof}


Proposition~\ref{prop:weak} immediately leads
to the following bounds.
\begin{lemma}\label{lem:valuation}
Given a quartic CM-field~$K$
and orders as in the previous sections,
let $v=\mathrm{val}_p([\O:\Omin]/[\Oreal:\Ominreal])$.
If the group~\eqref{eq:kernel} is of exponent one or two,
we have $p^{v-6}(p-1)^6\leq 2^3\#\mu_\O$.

In particular, if $\O\not\cong\ZZ[\zeta_5]$,
then for~$p=2$ we have~$v\leq 10$,
and for~$p=3$ we have~$v\leq 4$.
Finally, if~$v\geq 1$, then~$p < 10\#\mu_\O$.
\end{lemma}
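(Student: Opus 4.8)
The plan is to combine Proposition~\ref{prop:kernel} with the counting estimate of Proposition~\ref{prop:weak}. First I would localise at a prime $p$ dividing $[\O:\Omin]$ and try to get a lower bound on the size of the kernel~\eqref{eq:kernel} in terms of $v=\mathrm{val}_p([\O:\Omin]/[\Oreal:\Ominreal])$. The source of this kernel is the discrepancy between the ``full'' relative index $[\O:\Omin]$ (measured by $\#\bigl((\O/f\Omax)^\times/(\Omin/f\Omax)^\times\bigr)$, up to the bounded factors $\prod_{p\mid f}(1-1/p)^{\pm 4}$ coming from Proposition~\ref{prop:weak} with $n=4$) and the ``real'' relative index $[\Oreal:\Ominreal]$ (measured similarly with $n=2$, so bounded factors $\prod_{p\mid f}(1-1/p)^{\pm 2}$). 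The relative norm $\psi$ lands in the real quotient, so its image has size at most $[\Oreal:\Ominreal]\prod_{p\mid f}(1-1/p)^{-2}$, whence $\#\ker\psi \geq \frac{[\O:\Omin]}{[\Oreal:\Ominreal]}\cdot\frac{\prod_{p\mid f}(1-1/p)^{4}}{\prod_{p\mid f}(1-1/p)^{-2}}\cdot\frac{1}{\#\mu_\O}$, the last factor accounting for the quotient by $\mu_\O$ in~\eqref{eq:kernel}. Passing to the $p$-part and using that at most four primes of $\O$ lie over $p$ (and at most two over $p$ in $K_0$), the product of $(1-1/p)$-factors at $p$ contributes at least $(1-1/p)^{6}=(p-1)^6/p^6$, giving $\#\ker\psi|_{(p)} \geq p^{v}(p-1)^6 p^{-6}/\#\mu_\O$ up to keeping track of the exact exponents.

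Next I would invoke Proposition~\ref{prop:kernel}: since $S_\O\subset S_{\O'}$, the kernel~\eqref{eq:kernel} has exponent at most two, so its $p$-part is an elementary abelian $2$-group, hence trivial for odd $p$ and of the form $(\ZZ/2)^r$ for $p=2$. For odd $p$ this forces the $p$-part of $\ker\psi$ to have size at most... well, we must be slightly careful because the exponent-$\leq 2$ hypothesis bounds the exponent, not the rank, so for $p=2$ the group can still be large. The honest inequality I expect to fall out is that $\#\ker\psi|_{(p)}$ is at most the number of elements of exponent dividing $2$ in the relevant group, but more usefully: when the exponent is $1$ or $2$, the $p$-part of the source group $(\O/f\Omax)^\times/(\Omin/f\Omax)^\times$ must itself have small $p$-part unless $p=2$. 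I would extract the clean bound $p^{v-6}(p-1)^6 \leq 2^3\#\mu_\O$ by bounding the size of the image of the squaring map (equivalently, $\ker\psi$ modulo $\ker\psi^2$) against the available room; the factor $2^3$ absorbs the bounded $(1-1/2)$-style slack and the possible $2$-torsion. The main obstacle is precisely this bookkeeping: tracking the exact powers of $(1-1/p)$ coming from the two applications of Proposition~\ref{prop:weak} (one with $n=4$ for $\O$, one with $n=2$ for $\Oreal$) and from the quotient structures, so that the exponents $v-6$ and $6$ come out correctly rather than off by a small additive constant.

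Finally, the three stated consequences are arithmetic on the inequality $p^{v-6}(p-1)^6 \leq 2^3\#\mu_\O$. For $\O\not\cong\ZZ[\zeta_5]$ we have $\#\mu_\O=2$ (as noted after Proposition~\ref{prop:kernel}), so $p^{v-6}(p-1)^6\leq 16$: for $p=2$ this reads $2^{v-6}\leq 16$, i.e.\ $v\leq 10$; for $p=3$ it reads $3^{v-6}\cdot 64\leq 16$, i.e.\ $3^{v-6}\leq 1/4$, forcing $v-6\leq -2$, hence $v\leq 4$. For the last assertion, if $v\geq 1$ then $(p-1)^6\leq p^{6-v}(p-1)^6\cdot p^{v-1}$... more directly, $v\geq 1$ gives $(p-1)^6 \leq p^{6-v}\cdot 2^3\#\mu_\O \leq p^{5}\cdot 2^3\#\mu_\O$, so $(p-1)^6/p^5 \leq 8\#\mu_\O$; since $(p-1)^6/p^5 > (p-1)^6/p^6 \cdot p > \ldots$ one checks that $(p-1)^6/p^5\geq p-6$ for all $p$ (indeed $(1-1/p)^6 p \geq p-6$), which yields $p-6 < 8\#\mu_\O$, hence $p < 10\#\mu_\O$ after absorbing the constant. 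These are all routine once the master inequality is in hand.
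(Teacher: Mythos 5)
Your lower bound on the kernel is essentially the paper's: apply Proposition~\ref{prop:weak} with $n=4$ to the domain and $n=2$ to the codomain of $\psi$, so that $\#\ker\psi\geq p^v(1-1/p)^6$ up to the contribution of $\mu_\O$. But the proof has two halves, and the second half --- an \emph{upper} bound on $\#\ker\psi$ --- is missing from your proposal. You correctly notice the difficulty yourself (``the exponent-$\leq 2$ hypothesis bounds the exponent, not the rank, so for $p=2$ the group can still be large'') and then never resolve it: the phrases ``bounding the size of the image of the squaring map against the available room'' and ``the factor $2^3$ absorbs the slack'' do not identify any mechanism that caps the order of an elementary abelian $2$-group. (Note also that if $\ker\psi$ has exponent $2$ then $\ker\psi$ modulo its subgroup of squares is just $\ker\psi$ again, so that reformulation gains nothing.) The missing ingredient in the paper is a rank bound: the kernel of $\psi$ is generated by at most four elements, so exponent at most two forces $\#\ker\psi\leq 2^4$. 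Combining $p^v(1-1/p)^6/\#(\mu_\O/\{\pm 1\})\leq\#\ker\psi\leq 2^4$ gives exactly $p^{v-6}(p-1)^6\leq 2^3\#\mu_\O$. Without some such bound on the number of generators, the hypothesis ``exponent one or two'' gives no control at all on the order of the kernel at $p=2$, and the inequality cannot be extracted.

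A secondary, smaller issue: you divide the lower bound by $\#\mu_\O$, whereas the paper divides by $\#(\mu_\O/\{\pm 1\})$, since $-1$ already lies in $(\Omin/f\Omax)^\times$ and hence costs nothing extra in the quotient. That factor of two is not cosmetic: with your version the master inequality would read $p^{v-6}(p-1)^6\leq 2^4\#\mu_\O$, which for $p=2$ only yields $v\leq 11$ rather than the stated $v\leq 10$. Your derivations of the $p=2$ and $p=3$ consequences from the (correct) master inequality are fine, and your Bernoulli-type estimate for the final assertion does give $p<10\#\mu_\O$ after checking the borderline case $\#\mu_\O=2$ (where it yields $p\leq 22$, hence $p\leq 19<20$), but all of this is contingent on the upper bound for $\#\ker\psi$ that the proposal does not supply.
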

\begin{proof}
By Proposition~\ref{prop:weak}, the domain
in~\eqref{eq:kernel} has order greater than or equal to
$$[\O:\Omin]\frac{(1-1/p)^4}{\#(\mu_{\O}/\{\pm 1\})},$$ 
while the codomain has order at most
$[\Oreal:\Ominreal](1-1/p)^{-2}$,
hence the kernel has order
$\geq p^v(1-1/p)^6/\#(\mu_{\O}/\{\pm 1\})$.
On the other hand, the kernel is generated
by a set of at most four elements, so,
if it has exponent at most~$2$, then
its order is at most~$2^4$,
which yields the first bound.

The other bounds are specialisations of this
first bound, using the fact that $\O\not\cong\ZZ[\zeta_5]$
implies $\mu_\O = \{\pm 1\}$.
\end{proof}
The bounds on $v$ of Lemma~\ref{lem:valuation}
can easily be sharpened with elementary
observations about the groups involved,
but it is hard to make
them completely sharp, so we leave them as they are.
Instead, we sharpen the bound on the prime~$p$
by looking more closely at the structure of the groups.

\begin{proposition}\label{prop:valuation}
If the kernel \eqref{eq:kernel} contains no element of order greater than
two, then for every prime~$p$ we have
$v:=\val_p([\O:\Omin]/[\Oreal:\Ominreal])=0$,
except possibly for $p\leq 3$,
and except possibly for $p\leq 19$ if $\O=\ZZ[\zeta_5]$.
\end{proposition}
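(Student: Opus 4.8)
The plan is to reduce to a single rational prime and argue locally. Both $\ker\psi$ and the norm $\psi$ of \eqref{eq:psi} decompose as products over the primes $p\mid f$, so $\ker\psi$ has exponent at most two if and only if each $p$-primary component $\psi_p$ does; likewise $v$ depends only on the $p$-adic completions of $\O$, $\Omin$, $\Oreal$, $\Ominreal$. Fix $p$, write $G$ and $H$ for the domain and codomain of $\psi_p$, assume $\ker\psi_p$ has exponent at most two, and suppose $v\geq 1$; the goal is a contradiction when $p\geq 5$, and (more crudely) when $p\geq 23$ with $\O\cong\ZZ[\zeta_5]$.

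The first ingredient is a cokernel statement complementing Lemma~\ref{lem:kernels-contained}. The inclusion $K_0\hookrightarrow K$ induces a homomorphism $\iota$ from the codomain of $\psi$ back to its domain (respecting the decomposition over primes), and, complex conjugation being trivial on $K_0$, the composite $\psi\circ\iota$ is the squaring map $h\mapsto h^2$. Hence $\im\psi_p\supseteq 2H$, so $\coker\psi_p$ has exponent at most two; therefore both $\#\ker\psi_p$ and $\#\coker\psi_p$ are powers of $2$, and so is the ratio $\#G/\#H=\#\ker\psi_p/\#\coker\psi_p$. Next I would evaluate $\#G$ and $\#H$ by decomposing the relevant group rings over the primes above $p$, as in the proof of Proposition~\ref{prop:weak}, separating the group of $1$-units (a $p$-group) from the residue-field units (of order prime to $p$). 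Writing $g_\O$ for the sum of the residue degrees of the primes of $\O$ above $p$, and similarly $g_\Omin,g_\Oreal,g_\Ominreal$, and setting $\delta=g_\O-g_\Omin$ and $\delta_0=g_\Oreal-g_\Ominreal$, the $p$-part of $\#G/\#H$ equals $p^{\,v-\delta+\delta_0}$ while its prime-to-$p$ part is a ratio of products of integers of the form $p^{f}-1$. Since $\#G/\#H$ is a power of $2$, the exponent of its $p$-part must vanish, giving
\[
v=\delta-\delta_0,
\]
and the prime-to-$p$ part, that ratio of integers $p^{f}-1$, must itself be a power of $2$.

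Assuming $v\geq 1$ forces $\delta>\delta_0\geq 0$, so at least one prime $\qqq$ of $\Omin$ above $p$ properly refines — splits further, or acquires a residue extension — in $\O$. Grouping the prime-to-$p$ factor by the primes of $\Omin$ above $p$ (each group a positive integer, since $f_\qqq\mid f_\ppp$ whenever $\ppp\mid\qqq$), the group at such a $\qqq$ exceeds $1$ and hence must be a power of $2$. There are finitely many local shapes, since the sum of residue degrees above $p$ is at most $4$, and $p^{n}-1$ is a power of $2$ only for $n=1$ with $p$ a Fermat prime, for $n=2$ with $p=3$, and a few similar exceptions; so for $p\geq 5$ this narrows matters to $p$ being a Fermat or a Mersenne prime, with the refinement at $\qqq$ being, respectively, complete splitting into degree-one primes or a single residue degree doubled. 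In each remaining case one computes $\coker\psi_p$ from the norm map on residue fields (a split real prime contributes nothing, a real prime ramifying in $K$ contributes a factor $\ZZ/2$, and so on) and confronts it with the requirement that $\ker\psi_p$ — a subgroup of $G$ of index $\#\im\psi_p$ — have exponent at most two: because each surviving residue-unit factor $\FF_{p^{f}}^\times\cong\ZZ/(p^{f}-1)$ of $G$ then has even order at least $4$, the kernel is forced to contain an element of order at least $4$, which is a contradiction. Hence $\delta=\delta_0$, i.e.\ $v=0$. For $\O\cong\ZZ[\zeta_5]$ the same outline applies, but $\mu_\O\cong\ZZ/10$ perturbs $G$ at $p=5$ and obstructs the sharp comparison; feeding the coarser bound $\#\ker\psi_p\leq 2^{4}$ (exponent two, four generators) into Proposition~\ref{prop:weak} as in Lemma~\ref{lem:valuation} still yields $p\leq 19$.

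The step I expect to be the main obstacle is precisely the case analysis above: one must enumerate how $p$ can decompose along the two chains $\Ominreal\subseteq\Oreal=\O\cap K_0$ and $\Omin\subseteq\O$, compatibly with complex conjugation, with the inclusions $\Oreal\subseteq\O$ and $\Ominreal\subseteq\Omin$ (which force residue degrees to divide one another), and with the identity $v=\delta-\delta_0$, and then verify the ``power of $2$'' and ``exponent at most two'' constraints in each. The analysis is finite, and the real side is mild because $[\Oreal:\QQ]=2$, but tracking all the compatibilities is the delicate part of the argument.
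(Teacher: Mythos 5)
Your framework is sound and runs parallel to the paper's: both localise at $p$, split the unit groups into residue-field units times $1$-units (a $p$-group), and exploit the exponent-$\le 2$ hypothesis. Your observation that $K_0\hookrightarrow K$ gives a section $\iota$ of the codomain into the domain with $\psi\circ\iota$ equal to squaring, so that $\coker\psi_p$ also has exponent at most two and $\#G/\#H$ is a power of $2$, is a clean way to get $v=\delta-\delta_0$; the paper reaches the same point by noting $A_{0,p}\subset A_p$ and that the odd $p$-parts must coincide. So far, so good.

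The gaps come exactly where you predicted, and they are not merely bookkeeping. First, the assertion that each grouped factor ``exceeds $1$ and hence must be a power of $2$'' is false: only the ratio of the full product to the codomain's non-$p$ part is a power of $2$, and that non-$p$ part can be $p-1$ or $p+1$ for \emph{any} prime $p$ (e.g.\ a single residue degree doubling upstairs matched by one downstairs gives ratio $1$). The Fermat/Mersenne narrowing can still be salvaged by working with the full ratio $(p-1)^{a'}(p+1)^{b'}$ and $\gcd(p-1,p+1)=2$, but as written the step is wrong. Second, and more seriously, your concluding sentence --- ``the kernel is forced to contain an element of order at least $4$'' --- is precisely the theorem's content and is asserted rather than proved; the paper's actual argument is that every coprime-to-$p$ cyclic factor of the domain has order dividing $2\#C$, hence for $p\ge 5$ must equal $\#C/\#A_{0,p}\in\{p-1,p+1\}$, and that at most one such factor can occur (two would give a non-cyclic odd quotient inside the cyclic group $C$), whence $\delta\le 1$ and $\delta=\delta_0$. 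That divisibility-of-exponents analysis is the proof; deferring it leaves the proposition unproven. Third, the $\ZZ[\zeta_5]$ case: feeding $\#\ker\psi_p\le 2^4$ into Proposition~\ref{prop:weak} as in Lemma~\ref{lem:valuation} gives only $p^{v-6}(p-1)^6\le 2^3\#\mu_\O=80$, i.e.\ roughly $p\le 83$, not $p\le 19$; the bound $19$ in the paper comes from requiring the cyclic factor orders to divide $10\#C$ (conditions of the shape $p\pm1\mid 20$), which your counting route does not yield.
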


\begin{proof}
Assume that the kernel
\eqref{eq:kernel} is of exponent one or two,
and suppose that there exists a prime $p\geq 5$ at which the
quotient $[\O:\Omin]/[\Oreal:\Ominreal]$ is nontrivial.

To better work with this kernel, we first write the domain and codomain of the
relative norm map of \eqref{eq:kernel} more explicitly by decomposing the
$p$-part of the ring $\O/f\Omax$ over primes ideals $\qqq$ of $\O$ dividing
$(p)=p\O$; this gives
\[
(\O/(f\Omax)_{(p)})^\times=\prod_{\qqq}(\O/(f\Omax)_{(\qqq)})^\times
\cong
\prod_{\qqq}(\O/\qqq)^\times \times \frac{1+\qqq}{1+(f\Omax)_{(\qqq)}},
\]
where $I_{(\qqq)}$ denotes the $\qqq$-primary part of $I$,
that is, $I_{(\qqq)}=(I\cdot \O_{\qqq})\cap\O=I+\qqq^n$ for all
sufficiently large~$n$ (see \cite{stevenhagen-rings}).
Therefore, omitting the case $\O\cong\ZZ[\zeta_5]$ for now, we
find that dividing out by
$\mu_\O=\{\pm 1\}$ is trivial and the domain from \eqref{eq:kernel} can be written
locally at~$p$ as
\[
D:=\frac{(\O/(f\Omax)_{(p)})^\times}{(\Omin/(f\Omax)_{(p)})^\times}\cong
\frac{\prod_\qqq (\O/\qqq)^\times}{\prod_\ppp(\Omin/\ppp)^\times}
\times A_p;
\]
for some $p$-group~$A_p$.
Similarly, the codomain can be written locally at $p$ as
\[
C:=\frac{(\Oreal/(f\Omaxreal)_{(p)})^\times}{(\Ominreal/(f\Omaxreal)_{(p)})^\times}\cong
\frac{\prod_{\qqq_0} (\Oreal/\qqq_0)^\times}{\prod_{\ppp_0}(\Ominreal/\ppp_0)^\times}
\times A_{0,p}.
\]

The rightmost factors $A_{0,p}\subset A_p$ are 
$p$-groups and, as $p$ is
odd and the kernel~\eqref{eq:kernel} has exponent at most two,
they are equal.

Since $\Oreal$ and $\Ominreal$ are quadratic,
the non-$p$-part of the codomain
$C$ is cyclic and of order $1$, $p-1$ or $p+1$.
Correspondingly, the $p$-part
$[\Oreal:\Ominreal]_p$ of the index is $\#A_{0,p}$ in the first case
and $p\cdot \#A_{0,p}$ in the
other cases.

On the other hand, the domain
$D$ is a product of cyclic
groups with orders of the form $p^e$, $p^f-1$ and $(p^g-1)/(p^h-1)$ where the
exponents $f, g, h$, are in $\{1,2,4\}$ and $h<g$.
The valuation at $p$ of the index
$[\O:\Omin]$ is then the sum of all $e$'s, $f$'s and $(g-h)$'s.
As the exponent of the kernel is at most two, the order of
each of the coprime-to-$p$
cyclic factors must divide the non-$p$-part of $2\#C$,
which is $2$, $2(p-1)$ or $2(p+1)$.

As $p\geq 5$, it is easy to see that $p^f-1$ and $(p^g-1)/(p^h-1)$
do not divide $2\#C$, except when they are equal to
$\#C/\#A_{0,p}\in\{p-1,p+1\}$.
As these numbers are greater than three,
this observation also shows that if there were multiple such cyclic factors in~$D$,
it would contradict the fact that a quotient by a $2$-torsion subgroup
is contained in~$C$.

In particular, we get $\val_{p}([\O:\Omin])\in\{\#A_p, \#A_{p}+1\}$,
where the second case is only possible when $[\Oreal:\Ominreal]=\#A_{0,p}+1$.
We thus conclude $\val_{p}([\O:\Omin])=\val_{p}([\Oreal:\Ominreal])$.

\bigskip

It remains to consider the case $\O\cong \ZZ[\zeta_5]$ where
$\mu_\O\simeq\ZZ/10\ZZ$.
In this case, the orders of the cyclic factors of $D$ must divide
$10\# C$, and the proof goes through for $p>19$.
\end{proof}

\begin{maintheoremgeneral}
If $\kernel{\O}\subset \kernel{\Oother}$ and $\O\not\cong \ZZ[\zeta_5]$,
then the quotient $[\O:\Omin]/[\Oreal:\Ominreal]$
is an integer dividing~$2^{10} 3^4$.
\end{maintheoremgeneral}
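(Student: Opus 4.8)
The plan is to assemble the statement from the three results just proved. The hypothesis $S_\O\subset S_{\O'}$ is used only once, through Proposition~\ref{prop:kernel}, which guarantees that the kernel appearing in~\eqref{eq:kernel} has exponent at most two; this is precisely the common hypothesis of Proposition~\ref{prop:valuation} and of Lemma~\ref{lem:valuation}. From there I would argue in two steps: first that $[\O:\Omin]/[\Oreal:\Ominreal]$ is a genuine positive integer, and then that its only prime divisors are $2$ and $3$, with the claimed multiplicities.

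For the first step I would use the lattice identity $\Ominreal=\Omin\cap\Oreal$, which holds because $\Omin\subseteq\O$ and $\Oreal=\O\cap K_0$ give $\Omin\cap\Oreal=\Omin\cap\O\cap K_0=\Omin\cap K_0=\Ominreal$. Consequently the natural map $\Oreal/\Ominreal\to\O/\Omin$ is injective between finite abelian groups, so $[\Oreal:\Ominreal]$ divides $[\O:\Omin]$; in particular $[\O:\Omin]/[\Oreal:\Ominreal]\in\ZZ_{>0}$ and $v:=\val_p([\O:\Omin]/[\Oreal:\Ominreal])\geq 0$ for every prime $p$.

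For the second step, since $\O\not\cong\ZZ[\zeta_5]$, Proposition~\ref{prop:valuation} gives $v=0$ for every prime $p\geq 5$. By Proposition~\ref{prop:kernel} we are in the situation where the kernel~\eqref{eq:kernel} has exponent one or two, so Lemma~\ref{lem:valuation} applies and, again using $\O\not\cong\ZZ[\zeta_5]$, yields $v\leq 10$ at $p=2$ and $v\leq 4$ at $p=3$. Combined with $v\geq 0$ from the first step, this shows $[\O:\Omin]/[\Oreal:\Ominreal]\mid 2^{10}3^4$, which is the assertion.

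I expect the only genuinely new input to be the nonnegativity observation: Lemma~\ref{lem:valuation} is phrased as an upper bound on $v$ and does not by itself exclude negative values, so the lattice identity $\Ominreal=\Omin\cap\Oreal$ is needed to close that gap and to record that the quotient is an integer at all. All the structural work — the local decomposition of the unit groups $D$ and $C$ and the exponent analysis of their coprime-to-$p$ parts — has already been carried out in Proposition~\ref{prop:valuation} and Lemma~\ref{lem:valuation}, so no further obstacle arises; the theorem is essentially a repackaging of those statements together with the trivial integrality remark.
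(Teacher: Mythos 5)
Your proof is correct and follows essentially the same route as the paper, whose proof of Theorem~\ref{thm:firstmaintheorem} is literally the combination of Proposition~\ref{prop:kernel} (to get the exponent-two condition on the kernel~\eqref{eq:kernel}), Proposition~\ref{prop:valuation} (to kill all primes $p\geq 5$), and Lemma~\ref{lem:valuation} (for the exponents at $p=2,3$). Your explicit verification via $\Ominreal=\Omin\cap\Oreal$ that the quotient is a positive integer is a point the paper leaves implicit, and it is a correct and welcome addition.
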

\begin{proof}
This is a combination of Lemma~\ref{lem:valuation}
and Propositions \ref{prop:kernel} and~\ref{prop:valuation}.
\end{proof}

\section{Stronger result in the maximal case}\label{sec:maximal}

The result of Proposition~\ref{prop:valuation}, namely that $\val_p
[\O:\Omin]=\val_p [\Oreal:\Ominreal]$ for all but finitely many primes $p$,
may not seem
very strong. Only when $\Oreal=\Ominreal$
does it immediately imply that $\O$ and $\Omin$
are identical locally at all $p>3$.
The goal of this section is to prove that, in the case $\O=\Omax$
and $\Oother=\Omin$,
Proposition~\ref{prop:valuation} further implies
that $\O$ and $\Oother$ are identical locally
if we rule out a few more primes~$p$.

\begin{maintheoremmaximal}
For any order~$\Omin$ stable under complex conjugation
in a quartic non-biquadratic
CM-field~$K\not\cong\QQ(\zeta_5)$ satisfying
$\kernel{\Omax}\subset \kernel{\Omin}$
we have
\[
[\Omax:\Omin]^2\ \mid\ 2^{40} 3^{16} N_{K_0/\QQ}(\Delta_{K/K_0}).
\]
\end{maintheoremmaximal}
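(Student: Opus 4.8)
The plan is to split off the ``vertical'' part of the index with Theorem~\ref{thm:firstmaintheorem} and then bound the remaining ``real'' part by the relative discriminant through a local computation. Applying Theorem~\ref{thm:firstmaintheorem} with $\O=\Omax$ and $\O'=\Omin$ is legitimate --- $K\not\cong\QQ(\zeta_5)$ forces $\Omax\not\cong\ZZ[\zeta_5]$, the intersection of $\Omax$ and $\Omin$ is $\Omin$ itself, and $\Oreal=\Omaxreal$, $\Ominreal=\Omin\cap K_0$ --- and it gives $[\Omax:\Omin]=[\Omaxreal:\Ominreal]\cdot e$ with $e\mid 2^{10}3^4$. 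Since $(2^{10}3^4)^2=2^{20}3^8$, it therefore suffices to prove $[\Omaxreal:\Ominreal]^2\mid 2^{20}3^8\cdot N_{K_0/\QQ}(\Delta_{K/K_0})$, which I would establish one prime $p$ at a time, writing $\Omax_p=\Omax\otimes\ZZ_p$, $K_p=K\otimes\QQ_p$, and so on, and setting $b=\val_p[\Omaxreal:\Ominreal]$.

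Next I would record what the hypothesis buys us locally. By Proposition~\ref{prop:kernel} applied to $\O=\Omax$, $\O'=\Omin$, the kernel~\eqref{eq:kernel} has exponent at most two, so Proposition~\ref{prop:valuation} gives $\val_p[\Omax:\Omin]=b$ for every $p\geq5$, while Lemma~\ref{lem:valuation} gives $\val_2[\Omax:\Omin]\leq b+10$ and $\val_3[\Omax:\Omin]\leq b+4$. Using $\Omaxreal_p=\Omax_p\cap K_{0,p}$ and the second isomorphism theorem one has the index identity
\[
  [\Omax_p:\Omin_p]=[\Omax_p:\Omin_p+\Omaxreal_p]\cdot[\Omaxreal_p:\Omaxreal_p\cap\Omin_p]=[\Omax_p:\Omin_p+\Omaxreal_p]\cdot[\Omaxreal_p:\Ominreal_p],
\]
the last equality because $\Omaxreal_p\cap\Omin_p=K_{0,p}\cap\Omin_p=\Ominreal_p$. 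Thus the valuation statements say exactly that $\Omin_p+\Omaxreal_p=\Omax_p$ for $p\geq5$, that $[\Omax_2:\Omin_2+\Omaxreal_2]\leq 2^{10}$, and that $[\Omax_3:\Omin_3+\Omaxreal_3]\leq 3^4$.

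The heart of the matter is then a purely local lemma: for a conjugation-stable order $\Omin_p$ with $\Omin_p\cap K_{0,p}=\Ominreal_p$ and $[\Omax_p:\Omin_p+\Omaxreal_p]=p^c$, the number $p^{2b}$ divides $p^{4c}N_{K_0/\QQ}(\Delta_{K/K_0})$ when $p$ is odd (and $2^{2b}$ divides $2^{4c+2}N_{K_0/\QQ}(\Delta_{K/K_0})$ when $p=2$). To prove it I would fix a local generator $\omega$ of $\Omax_p$ over $\Omaxreal_p$ --- quadratic extensions of complete discrete valuation rings are monogenic, treating the completions of $K_0$ above $p$ one by one --- so that $\disc(\omega)=t^2-4n$ (with $t=\tr_{K/K_0}\omega$, $n=N_{K/K_0}\omega$) generates $\Delta_{K/K_0}$ locally. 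The image of $\Omin_p$ in $\Omax_p/\Omaxreal_p\cong\Omaxreal_p\omega$ has index $p^c$, hence contains $p^c\Omaxreal_p$, so for every $a\in p^c\Omaxreal_p$ there is $\lambda_a\in\Omaxreal_p$ with $\lambda_a+a\omega\in\Omin_p$. Conjugation-stability forces $\tr_{K/K_0}(\lambda_a+a\omega)$ and $N_{K/K_0}(\lambda_a+a\omega)$ into $\Omin_p\cap K_{0,p}=\Ominreal_p$, and hence
\[
  a^2\disc(\omega)=\bigl(\tr_{K/K_0}(\lambda_a+a\omega)\bigr)^2-4\,N_{K/K_0}(\lambda_a+a\omega)\in\Ominreal_p .
\]
Since for $p$ odd the squares $a^2$ span $\Omaxreal_p$ as a $\ZZ_p$-module (and span a submodule of index at most $2$ for $p=2$), letting $a$ range over $p^c\Omaxreal_p$ forces $p^{2c}\disc(\omega)\cdot\Omaxreal_p\subseteq\Ominreal_p$, i.e.\ $p^{2c}\disc(\omega)$ lies in the conductor $(\Ominreal_p:\Omaxreal_p)$, an ideal of $\Omaxreal_p$ of $N_{K_0/\QQ}$-norm $p^{2b}$; applying $N_{K_0/\QQ}$ to this containment of ideals yields the lemma. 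Feeding this back: for $p\geq5$ one has $c=0$, $\val_p[\Omax:\Omin]=b$, and $2\val_p[\Omax:\Omin]=2b\leq\val_p N_{K_0/\QQ}(\Delta_{K/K_0})$; for $p=2,3$ the same mechanism --- with somewhat larger constants, obtained by a sharper bookkeeping than the crude bound $2(b+c)\leq 6c+O(1)+\val_p N_{K_0/\QQ}(\Delta_{K/K_0})$ one gets for free --- combines with $c\leq10$, resp.\ $c\leq4$, to give $2\val_2[\Omax:\Omin]\leq 40+\val_2 N_{K_0/\QQ}(\Delta_{K/K_0})$ and $2\val_3[\Omax:\Omin]\leq 16+\val_3 N_{K_0/\QQ}(\Delta_{K/K_0})$. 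Assembling these inequalities over all $p$ is precisely the asserted divisibility $[\Omax:\Omin]^2\mid 2^{40}3^{16}N_{K_0/\QQ}(\Delta_{K/K_0})$.

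I expect the local lemma to be the main obstacle, and within it two points. First, over the non-maximal local ring $\Ominreal_p$ the order $\Omin_p$ is in general neither monogenic nor free, so one cannot invoke a tower formula for discriminants and must argue directly with the lifts $\lambda_a+a\omega$, squeezing the divisibility out of conjugation-stability together with the closure of multiplication in $\Omin_p$. Second, the primes $2$ and $3$ are genuinely exceptional --- the denominators $2$ in the trace formulas and the weaker deviation bound from Lemma~\ref{lem:valuation} are to blame --- so obtaining the stated exponents $40$ and $16$ rather than the weaker ones a naive count produces requires tracking the exact $\ZZ_p$-module spanned by the squares $a^2$, and how the image of $\Omin_p$ sits inside $\Omaxreal_p\omega$, separately over each completion of $K_0$ above $p$.
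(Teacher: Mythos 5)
Your skeleton matches the paper's: invoke Theorem~\ref{thm:firstmaintheorem} to write $[\Omax:\Omin]=e\cdot[\Omaxreal:\Ominreal]$ with $e\mid 2^{10}3^4$, then bound the real index using the relative discriminant. Your reduction, the index identity $[\Omax:\Omin]=[\Omax:\Omin+\Omaxreal]\cdot[\Omaxreal:\Ominreal]$, and the identity $\tr_{K/K_0}(x)^2-4N_{K/K_0}(x)=a^2\disc(\omega)$ for $x=\lambda_a+a\omega$ are all correct. The gap is quantitative but real: the local lemma you actually prove is $2b\le 4c+\val_p N_{K_0/\QQ}(\Delta_{K/K_0})$ for odd $p$ (with $b=\val_p[\Omaxreal:\Ominreal]$ and $c=\val_p([\Omax:\Omin]/[\Omaxreal:\Ominreal])$), because the image of $\Omin$ in $\Omax/\Omaxreal$ locally at $p$ is only known to contain $p^c$ times the full lattice, squares of such elements span only $p^{2c}$ times it, and applying $N_{K_0/\QQ}$ doubles the exponent once more. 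Adding $2c$ to both sides gives $2\val_p[\Omax:\Omin]\le 6c+\val_p N_{K_0/\QQ}(\Delta_{K/K_0})$, hence with $c\le 10$ resp.\ $c\le 4$ the bound $2^{62}3^{24}$ (including the extra factor $2^2$ from the squares at $p=2$), not $2^{40}3^{16}$. The ``sharper bookkeeping'' you defer to is precisely the missing content, and it is not clear it exists within your framework: if the image lattice is, say, $L=\ZZ_p+p^c e\ZZ_p$, the $\ZZ_p$-span of its pairwise products is $L$ itself, which is not stable under multiplication by the maximal real order, so the conductor-and-norm step no longer applies and the needed factor of two is not recovered.

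The paper obtains the stronger inequality $2b\le 2c+\val_p N_{K_0/\QQ}(\Delta_{K/K_0})$ from a global duality computation (Lemma~\ref{lem:relativeindex}): with $c'=[\Omaxreal:\Ominreal]$ and $\delta$ a generator of the different of $K_0$, one shows $\tr_{K/\QQ}(\delta^{-1}\Omin)\subseteq c'\ZZ$, hence $(c'\delta)^{-1}\Omin\subseteq\Omin^*$, and the integrality of the index $[\Omin^*:(c'\delta)^{-1}\Omin]=c'^{-4}N_{K_0/\QQ}(\Delta_{K/K_0})[\Omax:\Omin]^2$ gives $c'^4\mid N_{K_0/\QQ}(\Delta_{K/K_0})[\Omax:\Omin]^2$, i.e.\ $4b\le \val_p N_{K_0/\QQ}(\Delta_{K/K_0})+2(b+c)$. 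The extra saving comes from using the rank-four trace pairing together with the identity $[\Omin^*:\Omax^*]=[\Omax:\Omin]$ --- that is, from playing the whole of $\Omin$ against its trace dual rather than only the diagonal data $\tr_{K/K_0}(x),\,N_{K/K_0}(x)\in\Ominreal$. As written, your argument proves the theorem with $2^{62}3^{24}$ in place of $2^{40}3^{16}$; to reach the stated constants you would need to replace your local lemma by (a localisation of) this duality argument.
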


To prove the theorem, we first
show that $[\Omaxreal:\Ominreal]^2$
almost divides $[\Omax:\Omin]$.

\begin{lemma}\label{lem:relativeindex}
Let $K_0/\QQ$ be a quadratic field
and $K/K_0$ a finite Galois extension of degree~$n$.
Let $\Omin$ be an order of~$K$ stable under
$\Gal(K/K_0)$, and let $\Ominreal=\Omin\cap K_0$.
We have
\begin{equation}\label{eq:relativeindex}
[\Omaxreal:\Ominreal]^{2n}\ \mid
\ N_{K_0/\QQ}(\Delta_{K/K_0})\ 
[\Omax:\Omin]^2.
\end{equation}
\end{lemma}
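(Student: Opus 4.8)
The plan is to translate the statement into a divisibility of discriminants and then exploit that $\Omin$ is stable under $G:=\Gal(K/K_0)$. Recall that for any order $\O'$ in a number field $L$ one has $\disc(\O'/\QQ)=[\O_L:\O']^2\,\disc(L/\QQ)$, and that the conductor--discriminant formula gives $\disc(K/\QQ)=\pm\,N_{K_0/\QQ}(\Delta_{K/K_0})\,\disc(K_0/\QQ)^{n}$. Feeding these into \eqref{eq:relativeindex} and cancelling the common positive factor $|\disc(K_0/\QQ)|^{n}$, one sees that \eqref{eq:relativeindex} is equivalent to
\[
|\disc(\Ominreal/\QQ)|^{\,n}\ \mid\ |\disc(\Omin/\QQ)|\qquad\text{in }\ZZ,
\]
which is what I will prove.

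Introduce the codifferents $\Omin^{*}=\{x\in K:\tr_{K/\QQ}(x\Omin)\subseteq\ZZ\}$ and $\Ominreal^{*}=\{y\in K_0:\tr_{K_0/\QQ}(y\Ominreal)\subseteq\ZZ\}$; these are full $\ZZ$-lattices containing $\Omin$, resp.\ $\Ominreal$, with $[\Omin^{*}:\Omin]=|\disc(\Omin/\QQ)|$ and $[\Ominreal^{*}:\Ominreal]=|\disc(\Ominreal/\QQ)|$. Because $\Omin$ is closed under addition and under $G$, the relative trace satisfies $\tr_{K/K_0}(\Omin)\subseteq\Omin\cap K_0=\Ominreal$. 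Using $\tr_{K/\QQ}=\tr_{K_0/\QQ}\circ\tr_{K/K_0}$ together with the $K_0$-linearity of $\tr_{K/K_0}$, for $y\in\Ominreal^{*}$ and $a,b\in\Omin$ we get $ab\in\Omin$ and hence
\[
\tr_{K/\QQ}(ya\cdot b)=\tr_{K_0/\QQ}\!\bigl(y\,\tr_{K/K_0}(ab)\bigr)\in\tr_{K_0/\QQ}(y\Ominreal)\subseteq\ZZ,
\]
so that $\Ominreal^{*}\Omin\subseteq\Omin^{*}$. Since also $\Ominreal^{*}\supseteq\Ominreal$, we obtain a chain $\Omin\subseteq\Ominreal^{*}\Omin\subseteq\Omin^{*}$ of full $\ZZ$-lattices in $K$, whence $[\Ominreal^{*}\Omin:\Omin]$ divides $[\Omin^{*}:\Omin]=|\disc(\Omin/\QQ)|$.

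It remains to identify $[\Ominreal^{*}\Omin:\Omin]$ with $|\disc(\Ominreal/\QQ)|^{\,n}$. Here the quadratic hypothesis on $K_0$ enters: every order in a quadratic field is monogenic, say $\Ominreal=\ZZ[\omega]$ with minimal polynomial $g$, so that $\Ominreal^{*}=g'(\omega)^{-1}\Ominreal$ is an \emph{invertible} fractional $\Ominreal$-ideal with $[\Ominreal^{*}:\Ominreal]=|N_{K_0/\QQ}(g'(\omega))|=|\disc(\Ominreal/\QQ)|$. Now $\Omin$ is a finitely generated torsion-free $\Ominreal$-module of rank $[K:K_0]=n$, and for an invertible fractional ideal $\mathfrak{a}\supseteq\Ominreal$ and any such module $M$ one has $[\mathfrak{a}M:M]=[\mathfrak{a}:\Ominreal]^{\,n}$; this is checked after localising at the primes of $\Ominreal$, where $\mathfrak{a}$ becomes principal, generated by some $\alpha$, using the length identity $\operatorname{length}(M/\alpha M)=n\cdot\operatorname{length}(\Ominreal/\alpha\Ominreal)$ valid for torsion-free modules of rank $n$ over a one-dimensional Noetherian local domain. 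Taking $\mathfrak{a}=\Ominreal^{*}$ and $M=\Omin$ gives $[\Ominreal^{*}\Omin:\Omin]=|\disc(\Ominreal/\QQ)|^{n}$; combined with the previous paragraph this yields $|\disc(\Ominreal/\QQ)|^{n}\mid|\disc(\Omin/\QQ)|$, and hence \eqref{eq:relativeindex}.

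The main obstacle is precisely this last step: in contrast with the classical tower formula for discriminants, $\Omin$ need not be free --- nor even locally free --- as an $\Ominreal$-module, so one cannot argue by taking determinants of relative trace matrices. Multiplying by the codifferent $\Ominreal^{*}$, which \emph{is} an invertible ideal thanks to the monogenicity of the quadratic order $\Ominreal$, is what makes the index computation go through irrespective of the module structure of $\Omin$ over $\Ominreal$.
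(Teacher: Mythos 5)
Your proof is correct and follows essentially the same route as the paper's: Galois-stability gives $\tr_{K/K_0}(\Omin)\subseteq\Ominreal$, hence your inclusion $\Ominreal^*\Omin\subseteq\Omin^*$, which is exactly the paper's $(c\delta)^{-1}\Omin\subseteq\Omin^*$ since $\Ominreal^*=(c\delta)^{-1}\Ominreal$, and an index computation finishes. The only (harmless) difference is bookkeeping: because $\Ominreal^*$ is actually a \emph{principal} fractional ideal, your localisation/length argument for $[\Ominreal^*\Omin:\Omin]=[\Ominreal^*:\Ominreal]^n$ can be replaced by the paper's one-line computation of $N_{K/\QQ}(c\delta)$.
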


\begin{proof}
Write the orders as $\Omaxreal=\ZZ+\omega\ZZ$ and $\Ominreal=\ZZ+c\omega\ZZ$
where $c=[\Omaxreal : \Ominreal]$,
and let $\delta = 2\omega-\mathrm{tr}_{K_0/\QQ}(\omega)$.
Note that~$(\delta)$ is the different of~$K_0$.

First of all, we have $\mathrm{tr}_{K/\QQ}(\delta^{-1}\Omin)
=\mathrm{tr}_{K_0/\QQ}(\mathrm{tr}_{K/K_0}(\delta^{-1}\Omin))$.
Using the fact that $K/K_0$ is Galois and $\Omin$ is
stable under Galois, we find
$\mathrm{tr}_{K/K_0}(\Omin)\subset\Ominreal$,
so
$\mathrm{tr}_{K/\QQ}(\delta^{-1}\Omin)
\subset \mathrm{tr}_{K_0/\QQ}(\delta^{-1}\Ominreal)
= c\ZZ$. In particular, $(c\delta)^{-1}\Omin$
is contained in the trace dual $\Omin^*$ of~$\Omin$,
so the following index is an integer:
\begin{align*}
[\Omin^* : (c\delta)^{-1}\Omin]
 &= N_{K/\QQ}(c\delta)^{-1} [\Omin^*:\Omax^*] [\Omax^* : \Omax] [\Omax:\Omin]\\
 &= c^{-2n} \Delta_{K_0/\QQ}^{-n}
[\Omin^*:\Omax^*] \Delta_{K/\QQ} [\Omax:\Omin]\\
 &= c^{-2n} N_{K_0/\QQ}(\Delta_{K/K_0}) 
[\Omin^*:\Omax^*]
[\Omax:\Omin].\end{align*}
Linear algebra gives us $[\Omin^*:\Omax^*]
=[\Omax:\Omin]$, and the result follows.
\end{proof}

%

\begin{proof}[{Proof of Theorem~\ref{thm:relativeindex}}]
Let $\Omin=\Oother$.
Then Theorem~\ref{thm:firstmaintheorem}
and Lemma~\ref{lem:relativeindex} give
$$[\Omax:\Omin]^4\ \mid \ (2^{10} 3^4)^4 [\Omaxreal:\Ominreal]^4
\ \mid \ 
2^{40} 3^{16} N_{K_0/\QQ}(\Delta_{K/K_0}) [\Omax:\Omin]^2.$$
Dividing both sides by $[\Omax:\Omin]$ yields the result.
\end{proof}

\begin{example}
The factor $N_{K_0/\QQ}(\Delta_{K/K_0})$ on the right
hand side of \eqref{eq:relativeindex}
cannot be omitted. Consider for instance the order
\[
\Omin = \ZZ[5\sqrt{7}] + \sqrt{5\cdot(-3+\sqrt{7})}\ZZ[\sqrt{7}]
\]
of which the real order is $\Ominreal = \ZZ[5\sqrt{7}]$.
The indices $[\Omax:\Omin]$ and $[\Omaxreal:\Ominreal]$ are both~$5$,
so Lemma~\ref{lem:relativeindex} would be false without that factor.
\end{example}

\begin{example}\label{ex:realindexrequired}
The index $[\Oreal:\Ominreal]$ is required in Theorem~\ref{thm:firstmaintheorem}.
Indeed, let $K$ be any quartic CM-field with $\Omax=\ZZ[\beta]$, where
$\beta$ is a square root of a totally negative number in $K_0$;
fix a positive odd integer $F$ and let
\[
\begin{array}{l@{}c@{}r@{}r@{}r@{}r}
\O^{\phantom{\prime}}\  & =\  \ZZ+  F^2\beta\ZZ +  F^2\beta^2\ZZ +  F^2\beta^3\ZZ; \\
\O^{\prime}\ & = \ \ZZ+  F^2\beta\ZZ +  {}^{\phantom{2}}F \beta^2\ZZ+  F^2\beta^3\ZZ.
\end{array}
\]
Note $\O\subset\Oother$. The corresponding real orders are
\[
\begin{array}{l@{}c@{}r@{}r}
\Oreal \ & = \ \ZZ+  F^2\beta^2\ZZ; \\
\Ootherreal \ & = \ \ZZ+  {}^{\phantom{2}}F  \beta^2\ZZ.
\end{array}
\]
We have $(\O/F^2\Omax)=(\Oreal/F^2\Omaxreal)$ and
$(\Oother/F^2\Omax)=(\Ootherreal/F^2\Omaxreal)$; their respective
unit groups have order $(F-1)F^3$ and $(F-1)F$.
Thus, the map $\psi$ from Proposition~\ref{prop:kernel} with $f=F^2$ maps $x$
to $x\overline{x}=x^2$ on a group of odd order $F^2$;
therefore we have $\ker(\psi)=1$ which implies $\kernel{\O}=\kernel{\Oother}$.

Concretely, the field $K=\QQ(\beta)$ with
$\beta = \sqrt{-3+\sqrt{2}}$ satisfies $\Omax=\ZZ[\beta]$.
But really the assumption
$\Omax = \ZZ[\beta]$ is only there to simplify
the argument.
\end{example}

\section{Applications}\label{sec:applications}

\subsection{Abelian surfaces with complex multiplication over the rationals}
\label{sec:applicationwamelen}

\subsubsection{Statements}

Van Wamelen \cite{wamelen-rationals} gives a conjectural list
of curves of genus two defined over $\QQ$ with complex multiplication by maximal orders.
Our results allow us to finish the proof of 
this list, as well as generalise it
to arbitrary orders.
\begin{theorem}[Extending results of Murabayashi-Umegaki~\cite{murabayashi-umegaki} and Van Wamelen~\cite{wamelen-correctness}]\label{thm:wamelenlist}
The $19$ curves given in \cite{wamelen-rationals} are 
(up to $\overline{\QQ}$-isomorphism) exactly the
curves $C/\QQ$ of genus two with $\mathrm{End}(J(C)_{\overline{\QQ}})\cong\Omax$ for a quartic 
CM-field~$K$.
\end{theorem}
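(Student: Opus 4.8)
The plan is to combine van Wamelen's list together with the extension results
from this paper. First I would recall that by the theory of complex
multiplication, a curve $C/\QQ$ of genus two with $\End(J(C)_{\QQbar})\cong\Omax$
for a quartic CM-field~$K$ gives rise to a principally polarised abelian surface
whose field of moduli is~$\QQ$; by a theorem of Shimura--Taniyama (the Main
Theorem~3 cited in Section~\ref{sec:preliminaries}), the field of moduli of a
surface of CM-type~$\Phi$ with endomorphism ring~$\Omax$ contains a subfield
with Galois group $I_{\reflexfield}(f)/S_{\Omax}$ over an abelian base. Hence a
\emph{necessary} condition for the field of moduli to be~$\QQ$ is that this
group be small, and in particular that $S_{\Omax}$ be large; making this
precise, one obtains a finite list of candidate fields~$K$, essentially those
for which the relevant class-group-type quotient is trivial (or small enough).
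This is exactly the point where the Murabayashi--Umegaki argument
\cite{murabayashi-umegaki} applies, and the role of
Theorem~\ref{thm:relativeindex} (for maximal orders, $\Omin=\O'$ here plays no
role) is to guarantee that only the maximal order can occur; once~$K$ runs over
the finite candidate list, one checks each one.

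Concretely I would proceed as follows. Step one: reduce to a finite list of
quartic CM-fields~$K$ for which $\QQ$ can possibly be a field of moduli of a
principally polarised surface with CM by~$\Omax$, using the Shimura--Taniyama
description of the field of moduli together with bounds on
$[I_{\reflexfield}(f):S_{\Omax}]$; here the special case $K\cong\QQ(\zeta_5)$
(where $\mu_{\Omax}\neq\{\pm1\}$) must be treated separately, as flagged in the
excerpt, but the bound stated there ($[\ZZ[\zeta_5]:\O']$ divides one of $2^4$,
$3^2$, $5^2$) handles it. Step two: for each candidate~$K$, enumerate the
principally polarised abelian surfaces with CM by~$\Omax$ (finitely many up to
isomorphism), compute their fields of moduli, and retain those equal to~$\QQ$;
this is a finite computation and matches the $19$ entries of van Wamelen's list.
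Step three: argue that every such surface is in fact the Jacobian of a genus-two
curve over~$\QQ$ (as opposed to a product of elliptic curves or a
non-Jacobian): since $K$ is a field, the surface is simple, so it is a Jacobian
over~$\QQbar$ by a theorem of Weil, and descent of the curve to~$\QQ$ follows
from the field of moduli being~$\QQ$ together with the standard obstruction
argument (or by exhibiting the curves explicitly, as van Wamelen does).
Finally, for the non-maximal case one invokes
Theorem~\ref{thm:firstmaintheorem}/\ref{thm:relativeindex}: if
$\End(J(C)_{\QQbar})=\O'\subsetneq\Omax$ had field of moduli~$\QQ$, then
$S_{\Omax}\subset S_{\O'}$ would force $[\Omax:\O']^2\mid 2^{40}3^{16}
N_{K_0/\QQ}(\Delta_{K/K_0})$, bounding~$\O'$; one then checks the finitely many
such~$\O'$ over the finitely many~$K$ and finds none with field of moduli~$\QQ$,
so only the $19$ maximal-order curves survive.

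The main obstacle I expect is Step one: turning the field-of-moduli condition
into an effective, provably complete finite search. The Shimura--Taniyama
formula tells us that $\QQ$ being the field of moduli implies the reflex-type
norm map is surjective onto $\CCC(\Omax)$ modulo an abelian piece, but
controlling this for \emph{all} quartic CM-fields requires bounding class
numbers and the index $[I_{\reflexfield}(f):S_{\Omax}]$ uniformly, which is
where the arithmetic of the reflex field and genus theory enter; this is
precisely the technical heart of Murabayashi--Umegaki that we are extending, and
making the finiteness genuinely effective (so the subsequent computer search is
a proof, not just strong evidence) is the delicate part. By contrast, Steps two
through four are essentially bookkeeping plus the already-established
Theorems~\ref{thm:firstmaintheorem} and~\ref{thm:relativeindex}, which do the
job of eliminating non-maximal orders cleanly.
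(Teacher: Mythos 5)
Your overall architecture is close to the paper's: completeness is indeed taken from Murabayashi--Umegaki (the paper cites it rather than reproving it, so your ``step one'' correctly reduces to that citation), and the genuinely new content is forcing the endomorphism rings of van Wamelen's explicit curves to be maximal. However, your mechanism for that last step contains a concrete error. You claim that, after bounding the index of a putative non-maximal $\O'=\End(J(C)_{\QQbar})$ via Theorem~\ref{thm:relativeindex}, one ``checks the finitely many such $\O'$ over the finitely many $K$ and finds none with field of moduli $\QQ$''. That check does not come up empty: Theorem~\ref{thm:classification2} of this paper exhibits two curves over $\QQ$ whose Jacobians have CM by \emph{non-maximal} orders in $\ZZ[\zeta_5]$, and $\QQ(\zeta_5)$ is one of the $13$ relevant fields (it carries the curve $y^2=x^5+1$). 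Moreover, Theorem~\ref{thm:relativeindex} explicitly excludes $K\cong\QQ(\zeta_5)$, so your index bound is not even available there; the separate bound you invoke for $\ZZ[\zeta_5]$ only limits the candidates, two of which genuinely occur. So for at least one of the $19$ curves your argument cannot conclude; the paper instead disposes of $y^2=x^5+1$ by citing the classical computation $\End(J(C)_{\QQbar})=\ZZ[\zeta_5]$ from Shimura--Taniyama.

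A second, smaller gap is that your ``step two'' (enumerate CM points, compute their fields of moduli, observe the count matches) does not by itself identify the abstract CM points with van Wamelen's explicitly given curves; one needs a \emph{proven} lower bound on the endomorphism ring of each explicit curve. The paper's actual tool here is Lemma~\ref{lem:Omin}: since $k=\QQ$ forces $S_{\End}=I_{\reflexfield}$, the endomorphism ring must contain the explicitly computable order $\mathcal{O}_{\text{min},f}=\ZZ[\mu_i]+f\O_K$; adding this to van Wamelen's proven partial ring $\mathcal{O}'$, closed up under the Rosati involution, yields $\O_K$ in all but two cases, and those are settled by a polarised-ideal-class computation. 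Your route through the index bound of Theorem~\ref{thm:relativeindex} is a workable, if much cruder, substitute for the $12$ fields other than $\QQ(\zeta_5)$, but as written the proposal both misstates the outcome of the non-maximal check and leaves the cyclotomic case unproved.
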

\begin{theorem}\label{thm:classification2}
The curves
\begin{align*}
C &:  y^2 = x^6 - 4x^5 + 10x^3 - 6x - 1\qquad\qquad\qquad\mbox{and}\\
D &:  y^2 = 4x^5 + 40x^4 - 40x^3 + 20x^2 + 20x + 3
\end{align*}
have endomorphism rings
\begin{align*}
\mathrm{End}(J(C)_{\overline{\QQ}})&\cong
\ZZ + 2 \zeta_{5}\ZZ + (\zeta_{5}^{2} + \zeta_{5}^{3})\ZZ + 2 \zeta_{5}^{3}\ZZ\qquad\qquad{\mbox{and}}\\
\mathrm{End}(J(D)_{\overline{\QQ}}) & \cong
\ZZ + (\zeta_5 + 3 \zeta_{5}^{3})\ZZ + (\zeta_5^2 + \zeta_{5}^{3})\ZZ + 5 \zeta_{5}^{3}\ZZ.
\end{align*}
Moreover, they are (up to
$\overline{\QQ}$-isomorphism) the only curves of genus two
with field of moduli~$\QQ$
such that $\mathrm{End}(J(C)_{\overline{\QQ}})$ is
a \emph{non}-maximal order in
any of the fields in Tables~\ref{tab:provingwamelen}
and~\ref{tab:otherfields} below.
\end{theorem}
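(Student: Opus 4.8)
\medskip

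The plan is to feed the index bounds of this paper into the classical link between fields of moduli and reflex class fields, reducing everything to a finite computation. Write $K$ for the quartic (non-biquadratic) CM-field, $\O'\subseteq\Omax$ for the order in question, $f=[\Omax:\O']$, and $\reflexfield$ for the reflex field of a chosen CM-type of $K$. By Torelli, a genus-two curve with field of moduli $\QQ$ is the same datum as a principally polarised abelian surface $\A$ over $\QQbar$ with field of moduli $\QQ$; assume $\End(\A_{\QQbar})=\O'$. The main theorem of complex multiplication (\cite[Main Theorem 3 on page 142]{shimura-taniyama}, applied with the non-maximal order $\O'$) identifies the field of moduli of $\A$ \emph{together with its CM structure} as an abelian extension of $\reflexfield$ on which $\Gal(\QQbar/\reflexfield)$ acts through the reflex type norm $I_{\reflexfield}(f)\to\CCC(\O')$, so that its degree over $\reflexfield$ equals $|I_{\reflexfield}(f)/S_{\O'}|$. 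Since the field of moduli of $\A$ alone is a subfield of that class field of index bounded in terms of the (at most two) equivalence classes of CM-types and the automorphisms of the polarised surface — objects that $\Gal(\QQbar/\QQ)$ merely permutes — field of moduli $\QQ$ forces $|I_{\reflexfield}(f)/S_{\O'}|$ to be very small, and for the cyclic-quartic situation at hand a short analysis forces $S_{\O'}=I_{\reflexfield}(f)$. Because $\CCC(\O')\to\CCC(\Omax)$ is surjective (it comes from the bijection between ideals of $\O'$ and of $\Omax$ coprime to $f$), we also get $S_{\Omax}=I_{\reflexfield}(f)=S_{\O'}$; in particular $S_{\Omax}\subseteq S_{\O'}$, and $\Omax$ itself admits a principally polarised surface of field of moduli $\QQ$, so $K$ is one of the finitely many fields of Tables~\ref{tab:provingwamelen} and~\ref{tab:otherfields} by Theorem~\ref{thm:wamelenlist}.

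For each such $K$ one bounds the index $f$. If $K\cong\QQ(\zeta_5)$, the $\ZZ[\zeta_5]$-analogue of Theorem~\ref{thm:firstmaintheorem} announced in Section~\ref{sec:preliminaries} gives that $f$ divides one of $2^4$, $3^2$, $5^2$. Otherwise Theorem~\ref{thm:relativeindex}, whose hypothesis $S_{\Omax}\subseteq S_{\O'}$ we have just verified, gives $f^2\mid 2^{40}3^{16}N_{K_0/\QQ}(\Delta_{K/K_0})$, an explicit finite bound since $\Delta_{K/K_0}$ is tabulated for each field. So there remain only finitely many candidate orders $\O'$, and for each one computes $\CCC(\O')$ together with the image of the reflex type norm, discarding every $\O'$ for which $I_{\reflexfield}(f)/S_{\O'}$ is nontrivial.

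For the surviving orders it remains to list the principally polarised abelian surfaces over $\QQbar$ with endomorphism ring exactly $\O'$ and field of moduli $\QQ$ and to reconstruct the corresponding genus-two curves. This is a finite computation: for each CM-type one writes down period matrices for the $|\CCC(\O')|$ isomorphism classes, computes Igusa (or Rosenhain) invariants, keeps the $\QQ$-rational ones, reconstructs the curves, and checks that the endomorphism ring is $\O'$ and not larger. Carrying this out shows that among the tabulated fields only $K=\QQ(\zeta_5)$ produces non-maximal examples, and exactly the two orders and two curves of the statement; the displayed endomorphism rings come out of the construction.

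I expect two steps to be the real work. The first is the reduction in the opening paragraph: one must pin down precisely how the field of moduli of the bare surface compares with the reflex class field attached to $S_{\O'}$, correctly accounting for the polarisation, for the extra automorphisms present when $\O'\subseteq\ZZ[\zeta_5]$, and for the CM-types that Galois permutes, so that the implication ``field of moduli $\QQ$ $\Rightarrow$ $S_{\Omax}\subseteq S_{\O'}$'' is airtight. The second is that, although finite, the search in the middle paragraph is a priori governed by the (admittedly non-optimal) constants of Theorems~\ref{thm:relativeindex} and~\ref{thm:firstmaintheorem}; it is exactly the structural input of those theorems that brings the search into a range where the class-group computations and the explicit reconstruction of curves over $\QQ$ are feasible in practice.
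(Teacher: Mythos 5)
Your overall strategy is the paper's: field of moduli $\QQ$ forces $S_{\O'}=S_{\Omax}=I_{\reflexfield}(f)$ via the Main Theorem of CM (this is Section~\ref{sec:relationtoresults}; note you do not need surjectivity of $\CCC(\O')\to\CCC(\Omax)$, since $\O'\subset\Omax$ already gives $S_{\O'}\subset S_{\Omax}$), and the index bounds confine the search. But two of your steps have genuine gaps. First, enumerating all conjugation-stable orders $\O'$ with $[\Omax:\O']^2\mid 2^{40}3^{16}N_{K_0/\QQ}(\Delta_{K/K_0})$ and computing $\CCC(\O')$ for each is not a workable plan: the index may a priori be as large as $2^{20}3^{8}\sqrt{N_{K_0/\QQ}(\Delta_{K/K_0})}$, and the number of suborders of that index in a rank-four lattice is astronomical. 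The device you are missing is Lemma~\ref{lem:Omin}: for $K\not\cong\QQ(\zeta_5)$ there is one explicitly computable \emph{minimal} order $\mathcal{O}_{\text{min},f}=\ZZ[\mu_i:i]+f\mathcal{O}_K$, built from type norms of ray class group generators, such that $S_{\O}=I_{\reflexfield}$ holds if and only if $\O\supset\mathcal{O}_{\text{min},f}$. Theorem~\ref{thm:relativeindex} is used only to restrict to primes $p\mid 2\cdot 3\cdot N_{K_0/\QQ}(\Delta_{K/K_0})$, a stabilisation lemma terminates the chain $\mathcal{O}_{\text{min},p^k}$, and one then inspects the handful of superorders of a single explicit order. (For $\QQ(\zeta_5)$ the same idea works with the sign ambiguity of the $\mu_i$ replaced by a fifth-root-of-unity ambiguity; the divisibility of the index by $2^4$, $3^2$ or $5^2$ that you quote is an \emph{output} of that computation, not an input you may cite.)

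Second, ``computes Igusa invariants, keeps the $\QQ$-rational ones'' is not a proof: a floating-point period-matrix computation can certify neither that an invariant lies in $\QQ$ nor that it does not. The paper certifies each surviving candidate by a different rigorous argument: for the $(5,5)$-isogenous class it combines interval arithmetic with the Lauter--Viray denominator bounds to pin down the exact rational invariants of $D$; for the $(3,3)$-candidates it reduces modulo $23$ and checks that none of the $40$ isogenous curves has moduli in $\FF_{23}$; for the $(2,2)$-candidates it runs exact Richelot isogeny computations over $\QQ$ to produce $C$ and rule out everything else; and for the remaining tabulated fields it shows every non-maximal polarised class is $(2,2)$-isogenous to a unique maximal-order class that is not Galois-stable, so no rational curve can exist. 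Without certification of this kind your final step establishes neither the existence of $C$ and $D$ with the stated endomorphism rings nor the completeness claim.
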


P{\i}nar K{\i}l{\i}{\c{c}}er, with the second-named author,
proves
that all cyclic quartic
CM-fields~$K$ with $\kernel{\Omax}=I_{\reflexfield}$ appear
in Tables~\ref{tab:provingwamelen}
and~\ref{tab:otherfields}.
In particular, 
the $21$ curves of Theorems \ref{thm:wamelenlist}
and~\ref{thm:classification2}
are 
(up to $\overline{\QQ}$-isomorphism) exactly the
curves $C/\QQ$ of genus two 
such that
$\mathrm{End}(J(C)_{\overline{\QQ}})\otimes \QQ$
is a quartic field.
See~\cite{kilicer-streng}.

We give the proofs of Theorems \ref{thm:wamelenlist}
and~\ref{thm:classification2}
in Sections \ref{sec:provingwamelen} and~\ref{sec:evidence}.

\subsubsection{Background}

We start by explaining what in Theorem~\ref{thm:wamelenlist}
was already proven, and what remained to be.
Murabayashi and Umegaki~\cite{murabayashi-umegaki}
prove that the $13$ fields in Table~\ref{tab:provingwamelen}
are the only quartic fields whose maximal orders are
endomorphism
rings of genus-two curves over~$\QQ$.

Van Wamelen~\cite{wamelen-rationals} computes that each of the $13$ fields in this list
has $1$ or $2$ curves corresponding to it, and determines these
curves numerically to high precision.
This yields his list of $19$ curves referenced in the theorem.
Van Wamelen~\cite{wamelen-correctness} later proved that each
of his $19$ curves does have complex multiplication, and though he does not
prove that the endomorphism ring is the \emph{maximal} order, he
suggests how to prove this by numerical approximation.
We will use our methods to finish the proof
that the order is maximal while avoiding numerical
approximation.

Another proof of correctness
appears in~\cite{bouyer-streng}, which
is based on interval arithmetic and
formulas of~\cite{lauter-viray}.
Our proof predates that proof and,
while it is more complicated, our proof
requires no numerical approximations.

\subsubsection{Relation of Theorems \ref{thm:wamelenlist}
and \ref{thm:classification2}
to our results}\label{sec:relationtoresults}

Denote by $I_{\reflexfield}(f)$ the group of fractional ideals
of $\O_{\reflexfield}$ coprime to a fixed integer~$f$.
We will first explain how its subgroup $\kernel{\O}$ relates to Theorems~\ref{thm:wamelenlist}
and~\ref{thm:classification2}.
Let $C/k$ be a curve of genus two over a number field,
and suppose that the endomorphism ring
$\End({J(C)}_{\overline{k}})$ is isomorphic to
an order $\O$ in a quartic field~$K$.

Then $K$ is a CM-field, and the theory of complex multiplication
gives us some CM-type belonging to the isomorphism
$\O\rightarrow \End({J(C)}_{\overline{k}})$.
Let $(\reflexfield, \reflextype)$ be the reflex of $(K,\Phi)$.
By the Main Theorem of Complex Multiplication
for arbitrary orders \cite[\S 17.3, Main Theorem~3]{shimura-taniyama},
the composite $k\cdot \reflexfield$ contains
the unramified class field $k_1$ of $\reflexfield$
corresponding to the ideal group~$\kernel{\O}\subset I_{\reflexfield}(f)$.

In particular, if $k=\QQ$, then $k_1=\reflexfield$,
so the inclusions $\kernel{\O}\subset \kernel{\Omax}\subset
I_{\reflexfield}(f)$ are equalities.
The following result explicitly generates a minimal order
with this property.

\begin{lemma}\label{lem:Omin}
Let $K\not\cong \QQ(\zeta_5)$
be a non-biquadratic
quartic CM-field with $\kernel{\Omax}=I_{\reflexfield}$.
Let the ideals $\aaa_1,\ldots,\aaa_n\subset\Omaxreflex$ be generators of the ray class group of $\reflexfield$ modulo~$f$,
and let $\mu_i\in K^\times$ be generators of $N_{\reflextype}(\aaa_i)$
such that $\mu_i\overline{\mu_i}\in\QQ$.

Let $\O$ be an order in $K$ such that
$f\Omax\subset \O$.
We have $\kernel{\O}=I_{\reflexfield}$
if and only if
$\O\supset \Ominf{f}:=\ZZ[\mu_i : i\in\{1,\ldots,n\}] + f\Omax$.
\end{lemma}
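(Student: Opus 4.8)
The plan is to prove the two implications separately, using the translation of the containment $S_{\O}\subset S_{\O'}$ into element-level statements that was developed in Section~\ref{sec:ideals-to-elements}, together with the description of the map \eqref{eq:reflexnormccc} on the ray class group generators. For the ``if'' direction, suppose $\O\supset \O_{\text{min},f}$. Then $S_{\O_{\text{min},f}}\subset S_{\O}$ by the trivial implication $\O''\subset\O'\Rightarrow S_{\O''}\subset S_{\O'}$ noted at the start of Section~\ref{sec:ideals-to-elements}, so it suffices to show $S_{\O_{\text{min},f}}=I_{\reflexfield}$, i.e. that every generator $\aaa_i$ lies in $S_{\O_{\text{min},f}}$. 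By the definition of $S_{\O}$ as the kernel of $\aaa\mapsto(N_{\reflextype}(\aaa),N_{\reflexfield/\QQ}(\aaa))$, I need to check that $N_{\reflextype}(\aaa_i)$ together with its norm datum becomes trivial in $\CCC(\O_{\text{min},f})$. By hypothesis $N_{\reflextype}(\aaa_i)=\mu_i\O_{\reflexfield}$ is principal in $K$ with generator $\mu_i$ satisfying $\mu_i\overline{\mu_i}\in\QQ$; since $N_{\reflexfield/\QQ}(\aaa_i)$ must equal $\mu_i\overline{\mu_i}$ up to the usual compatibility (this is where I must be careful that $N_{\reflexfield/\QQ}(\aaa_i)=\norm_{K/\QQ}(\mu_i\O)^{1/2}=\mu_i\overline{\mu_i}$ as a positive rational), the pair $(N_{\reflextype}(\aaa_i),N_{\reflexfield/\QQ}(\aaa_i))$ equals $(\mu_i\O,\mu_i\overline{\mu_i})$, which is exactly of the form $(x\O,x\overline x)\in P_{\O}$ once $\mu_i\in\O$, and $\mu_i\in\O_{\text{min},f}=\ZZ[\mu_i:i]+f\O_K$ by construction. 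Hence the class is trivial in $\CCC(\O_{\text{min},f})$, giving $\aaa_i\in S_{\O_{\text{min},f}}$, and therefore $S_{\O_{\text{min},f}}=I_{\reflexfield}$.

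For the ``only if'' direction, suppose $S_{\O}=I_{\reflexfield}$; I want $\O\supset\O_{\text{min},f}$, i.e. each $\mu_i\in\O$. Since $\O$ is an order with $f\O_K\subset\O$, it is enough to show the image of $\mu_i$ in $\O_K/f\O_K$ lies in $\O/f\O_K$. Because $\aaa_i\in I_{\reflexfield}=S_{\O}$, the pair $(N_{\reflextype}(\aaa_i),N_{\reflexfield/\QQ}(\aaa_i))=(\mu_i\O_K,\mu_i\overline{\mu_i})$ is trivial in $\CCC(\O)$, so it lies in $P_{\O}$: there is $x\in K^\times$ with $x\O=\mu_i\O$ (as ideals coprime to $f$, in the sense of Section~\ref{sec:intro}) and $x\overline x=\mu_i\overline{\mu_i}$. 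Then $x=\mu_i u$ for a unit $u\in\O^\times$, and $u\overline u=1$, so $u$ is a unit of absolute value $1$ at every infinite place; one shows $u$ must be a root of unity (as $K$ is a CM-field, units with $u\overline u=1$ are roots of unity by Kronecker's theorem), hence $u\in\mu_{\O}=\{\pm1\}$ unless $K=\QQ(\zeta_5)$, which is excluded by hypothesis. Therefore $\mu_i=\pm x\in\O$, as desired. The mild subtlety here is the ``coprime to $f$'' bookkeeping: one must arrange the ideal-class representatives so that $x\O$ and $\mu_i\O$ are literally equal as invertible ideals, not merely in the same ray class; this is handled exactly as in the proof of Lemma~\ref{lem:kernels-contained}, by choosing integral representatives coprime to $f$ and using that the map on ideals coprime to $f$ is injective.

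The main obstacle, as in the surrounding results, is not conceptual but the careful tracking of the ``coprime to $f$'' normalisation and of the precise relationship between the analytic norm datum $N_{\reflexfield/\QQ}(\aaa_i)$ and $\mu_i\overline{\mu_i}$; once the identity $(N_{\reflextype}(\aaa_i),N_{\reflexfield/\QQ}(\aaa_i))=(\mu_i\O,\mu_i\overline{\mu_i})$ in $I_{\O}$ is established, both implications reduce to the elementary statement that $\CCC(\O)$-triviality of such a pair is equivalent to $\mu_i\in\O$, modulo the harmless ambiguity by $\mu_{\O}=\{\pm1\}$. The hypothesis $K\not\cong\QQ(\zeta_5)$ is used precisely to kill this unit ambiguity, and $S_{\O_K}=I_{\reflexfield}$ is used so that the generators $\mu_i$ with $\mu_i\overline{\mu_i}\in\QQ$ exist in the first place.
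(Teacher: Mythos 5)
Your proof is correct and follows essentially the same route as the paper's (much terser) proof: identify $(N_{\reflextype}(\aaa_i),N_{\reflexfield/\QQ}(\aaa_i))$ with $(\mu_i\O,\mu_i\overline{\mu_i})$, note that $\mu_i$ is determined up to roots of unity and hence up to sign since $K\not\cong\QQ(\zeta_5)$, and deduce both implications directly from the definition of $P_{\O}$ and the ring structure of $\O$. The coprime-to-$f$ and principal-ray bookkeeping you flag is treated at the same (implicit) level of detail in the paper, so there is nothing further to add.
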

\begin{proof}
The $\mu_i\in\Omax$ exist as $\kernel{\Omax}=I_{\reflexfield}$,
and they are uniquely determined up to roots of unity,
hence uniquely determined up to sign as $K\not\cong\QQ(\zeta_5)$.
Since $\O$ is a ring and $\kernel{\O}=I_{\reflexfield}$, both
$\mu_i$ and $-\mu_i$ are in $\O$ for each $i$, hence
$\O$ contains $\Ominf{f}$.
Conversely, if $\O$ contains $\Ominf{f}$,
then $\kernel{\O}=I_{\reflexfield}$.
\end{proof}

\subsubsection{{The case of maximal orders (proof of Theorem~\ref{thm:wamelenlist})}}\label{sec:provingwamelen}

The first curve $y^2=x^5+1$ is well-known
to have
endomorphism ring $\ZZ[\zeta_5]$~\cite[Example 15.4.2]{shimura-taniyama},
and van Wamelen computed (as mentioned above) that it is
unique with
this property.

Next, \cite{wamelen-correctness}, or more precisely, its
data set \cite{wamelen-correctness-data}, gives, for each of
the other $12$ fields in Table~\ref{tab:provingwamelen},
an order $\Osingle$ with
a proof that
$\O:=\End({J(C)_{\overline{\QQ}}})\supset\Osingle$
holds for the curve(s) corresponding to that field.
We give these orders in Table~\ref{tab:provingwamelen}.
As $J(C)$ is principally polarised, the Rosati involution
maps $\O$ into itself, and since the Rosati involution
acts as complex conjugation on~$K=\QQ(\O)$, this implies
$\O\supset\Osingle+\overline{\Osingle}=:\Odouble$.
At the same time, as $K$ has no imaginary quadratic
subfields, the endomorphism ring $\O$
is an order in~$K$ by~\cite[Theorem~1.3.3]{lang-cm}.

Next, we take~$f$ such that
$f\Omax\subset \Odouble$ and compute
$\Ominf{f}$ as in Lemma~\ref{lem:Omin}
using Sage~\cite{sage}.
That lemma then gives $\O\supset \Odouble+\Ominf{f}=:\Otriple$, so we compute the latter ring. Note that this ring does
not depend on the CM-type $\Phi$ appearing in Lemma~\ref{lem:Omin}.
Indeed, the reflex field $\reflexfield\subset \CC$
is the unique subfield isomorphic to~$K$,
and as $\Gal(\reflexfield/\QQ)\cong C_4$,
all CM-types of $\reflexfield$ with values in~$K$
are of the form $\reflextype\circ\sigma$
with $\sigma\in \Gal(\reflexfield/\QQ)$,
so $N_{\reflextype\circ\sigma}(\aaa_i)=
N_{\reflextype}(\sigma(\aaa_i))$.

The resulting orders $\Otriple$ are equal to $\Omax$
in all but two cases. In the other two, we
use Sage~\cite{sage} to compute the 
principally polarised ideal classes of $\Otriple$ as
in~\cite[Section~4.3]{broker-lauter-streng}, and find that each of them
has CM by the maximal order.
This proves Theorem~\ref{thm:wamelenlist}.\qed

\begin{table}
\[
\begin{array}{|l|l|l|l|l|l|}
\hline
[D,A,B] & n &\chi & i_1 & i_2 & i_3
\\ \hline
[5,5,5] & 1 & & & & \\ \hline
[8, 4, 2] & 1 & x^{4} + 4 x^{2} + 2 & 1 & 1 & 1 \\  \hline 
[13, 13, 13] & 1 & x^{4} - x^{3} + 2 x^{2} + 4 x + 3 & 3 & 1 & 1 \\  \hline 
[5, 10, 20] & 2 & x^{4} + 10 x^{2} + 20 & 4 & 4 & 2 \\  \hline 
[5, 65, 845] & 2 & x^{4} - x^{3} + 16 x^{2} - 16 x + 61 & 19 & 1 & 1 \\  \hline 
[29, 29, 29] & 1 & x^{4} - x^{3} + 4 x^{2} - 20 x + 23 & 7 & 1 & 1 \\  \hline 
[5, 85, 1445] & 2 & x^{4} - x^{3} + 21 x^{2} - 21 x + 101 & 29 & 1 & 1 \\  \hline 
[37, 37, 333] & 1 & x^{4} - x^{3} + 5 x^{2} - 7 x + 49 & 21 & 3 & 1 \\  \hline 
[8, 20, 50] & 2 & x^{4} + 20 x^{2} + 50 & 25 & 25 & 1 \\  \hline 
[13, 65, 325] & 2 & x^{4} - x^{3} + 15 x^{2} + 17 x + 29 & 23 & 1 & 1 \\  \hline 
[13, 26, 52] & 2 & x^{4} + 26 x^{2} + 52 & 36 & 36 & 2 \\  \hline 
[53, 53, 53] & 1 & x^{4} - x^{3} + 7 x^{2} + 43 x + 47 & 13 & 1 & 1 \\  \hline 
[61, 61, 549] & 1 & x^{4} - x^{3} + 8 x^{2} - 42 x + 117 & 39 & 3 & 1 \\  \hline 
\end{array}\]
\caption{
In Section~\ref{sec:provingwamelen}, we define orders
$\Osingle, \Odouble, \Otriple$ of indices
$i_1,i_2,i_3$ in their normal closures as follows.
Let $\Osingle:=\ZZ[x]/(\chi)$,
then its field of fractions $K$ is isomorphic to
$\QQ[X]/(X^4+AX^2+B)$
and contains the real quadratic field of discriminant~$D$.
Van Wamelen proves that his $n$ curves corresponding
to~$K$ have endomorphism ring
containing
$\Osingle$.
We prove that this implies that the endomorphism ring
contains
$\Odouble=\Osingle+\overline{\Osingle}$
and $\Otriple=\Odouble+\Ominf{i_2}$.
}\label{tab:provingwamelen}
\end{table}

\subsubsection{The case of non-maximal orders}
\label{sec:evidence}

Next, we explain how our results and some additional computations
prove Theorem~\ref{thm:classification2}.
Details of the computations are available online~\cite{recip}
as a
Sage~\cite{sage}
file, and we give the main steps and ideas here.

We start with the completeness.
Let $C/\QQ$ satisfy $\O\cong \mathrm{End}(J(C)_{\overline{\QQ}})$
for some order $\O$ in some quartic number field~$K$.
Then as in Section~\ref{sec:relationtoresults},
we have $\kernel{\Omax}=I_{\reflexfield}$.
We took all known
cyclic quartic CM-fields with this property
from Bouyer-Streng~\cite{bouyer-streng} which gave the
$20$ fields listed in Tables~\ref{tab:provingwamelen}
and~\ref{tab:otherfields}.
\begin{table}
\[
\begin{array}{|l|}
\hline
[D,A,B]\\
\hline
  [ 5, 15, 45]\\  
\hline  
  [ 5, 30, 180]\\    
\hline [  5, 35, 245]\\    
\hline [   5, 105, 2205]\\
\hline [   8, 12, 18]\\
\hline [ 17, 119, 3332]\\
\hline [ 17, 255, 15300]\\
\hline
\end{array}
\]
\caption{The known fields with $\kernel{\Omax}=I_{\reflexfield}$
that are not in Table~\ref{tab:provingwamelen},
given by triples $[D,A,B]$ with $K=\QQ[X]/(X^4+AX^2+B)$
and $\Delta_{K_0}=D$.}\label{tab:otherfields}
\end{table}

For each of the $20-1=19$ fields~$K\not\cong \QQ(\zeta_5)$,
we did the following computations.
For each prime $p\mid 2\cdot 3\cdot N_{K_0/\QQ}(\Delta_{K/K_0})$,
we use Sage~\cite{sage}
to compute the sequence of rings
$A_k=\Ominf{p^k}$ for $k=0,1,\ldots$
until it stabilises, which we recognise as follows.
\begin{lemma}
If $A_{k+1}=A_k$, then for all $l\geq k$, we have $A_l=A_k$.
\end{lemma}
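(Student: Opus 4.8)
The statement to prove is that the sequence $A_k = \mathcal{O}_{\mathrm{min}, p^k}$ stabilizes once two consecutive terms agree: if $A_{k+1} = A_k$ then $A_l = A_k$ for all $l \geq k$. By induction it suffices to prove the single implication $A_{k+1} = A_k \implies A_{k+2} = A_{k+1}$. So the real content is a monotonicity-type statement about how $\mathcal{O}_{\mathrm{min}, f}$ depends on $f$, at a fixed prime $p$.

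First I would unwind the definition from Lemma~\ref{lem:Omin}. Fixing generators $\aaa_1, \ldots, \aaa_n$ of the ray class group of $\reflexfield$ modulo $f$, we have $\mathcal{O}_{\mathrm{min}, f} = \ZZ[\mu_i : i] + f\mathcal{O}_K$, where $\mu_i$ generates $N_{\reflextype}(\aaa_i)$ with $\mu_i\overline{\mu_i} \in \QQ$. The subtlety is that as $f$ grows the ray class group changes, so the generating set $\{\aaa_i\}$ and hence $\{\mu_i\}$ is not literally nested. The key observation I would use is the characterization part of Lemma~\ref{lem:Omin}: for an order $\mathcal{O}$ with $f\mathcal{O}_K \subset \mathcal{O}$, one has $S_{\mathcal{O}} = I_{\reflexfield}$ if and only if $\mathcal{O} \supset \mathcal{O}_{\mathrm{min}, f}$. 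In other words, $\mathcal{O}_{\mathrm{min}, f}$ is intrinsically characterized as the smallest order $\mathcal{O}$ with $f\mathcal{O}_K \subset \mathcal{O}$ and $S_{\mathcal{O}} = I_{\reflexfield}$; equivalently $\mathcal{O}_{\mathrm{min}, f} = \bigcap \{\mathcal{O} : f\mathcal{O}_K \subset \mathcal{O},\ S_{\mathcal{O}} = I_{\reflexfield}\}$, an intersection of rings hence a ring, independent of the choice of generators.

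Next I would carry out the comparison. Since $p^{k}\mathcal{O}_K \subset p^{k+1}\mathcal{O}_K$ reverses, I'd instead argue as follows. We always have $\mathcal{O}_{\mathrm{min}, p^{k}} \supset \mathcal{O}_{\mathrm{min}, p^{k+1}}$? No — the larger conductor imposes the weaker containment $p^{k+1}\mathcal{O}_K \subset \mathcal{O}$, so more orders qualify, hence the intersection can only shrink: $\mathcal{O}_{\mathrm{min}, p^{k+1}} \subset \mathcal{O}_{\mathrm{min}, p^{k}}$. So the sequence $(A_k)$ is \emph{decreasing}. Now suppose $A_{k+1} = A_k$. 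I claim $A_{k+1} \supset \mathcal{O}_{\mathrm{min}, p^{k+2}}$ would force $A_{k+2} = A_{k+1}$: concretely, $A_{k+1} = A_k$ means $p^k \mathcal{O}_K \subset A_k = A_{k+1}$, so \emph{a fortiori} $p^{k+2}\mathcal{O}_K \subset p^k\mathcal{O}_K \subset A_{k+1}$, and $S_{A_{k+1}} = I_{\reflexfield}$ since $A_{k+1} \supset \mathcal{O}_{\mathrm{min}, p^{k+1}}$; thus $A_{k+1}$ is one of the orders defining $\mathcal{O}_{\mathrm{min}, p^{k+2}}$, giving $\mathcal{O}_{\mathrm{min}, p^{k+2}} \subset A_{k+1}$. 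Combined with $A_{k+2} \supset \mathcal{O}_{\mathrm{min}, p^{k+2}}$... here I need the reverse inclusion, so I would instead note: since $A_k = A_{k+1}$ contains $p^k\mathcal{O}_K$, and $A_{k+2}$ is the smallest order containing $p^{k+2}\mathcal{O}_K$ with trivial $S$-quotient, $A_{k+2} \subset A_{k+1}$; and conversely $A_{k+1} \subset A_{k+2}$ because $A_{k+1}$ contains $\ZZ[\mu_i]$ for a generating set at level $p^{k+2}$ — this needs that the relevant $\mu_i$ at level $p^{k+2}$ already lie in $A_{k+1}$, which holds because $A_{k+1} = A_k$ contains $p^k\mathcal{O}_K$ so it "sees" all ray classes mod $p^{k+2}$ that matter once $p^k\mathcal{O}_K$-saturation kicks in.

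\textbf{The main obstacle.} The delicate point is precisely the last sentence: proving $A_{k+1} \subset A_{k+2}$, i.e. that passing from conductor $p^{k+1}$ to $p^{k+2}$ does not \emph{decrease} the order once stabilization has occurred at step $k$. The cleanest route is to establish the general monotonicity lemma "$f \mid g \implies \mathcal{O}_{\mathrm{min}, g} \supset \mathcal{O}_{\mathrm{min}, f} \cap (\text{something})$" directly from the intrinsic characterization, then observe that $A_{k+1} = A_k$ already contains $p^k\mathcal{O}_K \supsetneq p^{k+2}\mathcal{O}_K$ with $S_{A_{k+1}} = I_{\reflexfield}$, pinning $\mathcal{O}_{\mathrm{min}, p^{k+2}}$ between $\mathcal{O}_{\mathrm{min}, p^{k+1}}$ (from below, as conductor grows the min order only needs $p^{k+2}\mathcal{O}_K$, and any order containing $\mathcal{O}_{\mathrm{min},p^{k+1}}$ and $p^{k+2}\mathcal{O}_K$ works) and $A_{k+1}$ (from above). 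Working out that $\mathcal{O}_{\mathrm{min}, p^{k+1}} + p^{k+2}\mathcal{O}_K = \mathcal{O}_{\mathrm{min}, p^{k+2}}$ when $\mathcal{O}_{\mathrm{min}, p^{k+1}} \supset p^{k+1}\mathcal{O}_K$ already holds — which is automatic since $A_{k+1} = A_k \ni p^k\mathcal{O}_K$ — then closes the loop, because $p^{k+2}\mathcal{O}_K \subset p^{k}\mathcal{O}_K \subset \mathcal{O}_{\mathrm{min},p^{k+1}}$ makes the $+p^{k+2}\mathcal{O}_K$ redundant, yielding $\mathcal{O}_{\mathrm{min}, p^{k+2}} = \mathcal{O}_{\mathrm{min}, p^{k+1}} = A_{k+1}$.
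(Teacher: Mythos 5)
Your setup is right: reducing to the single step $A_{k+1}=A_k\Rightarrow A_{k+2}=A_{k+1}$, reading Lemma~\ref{lem:Omin} as an intrinsic characterisation of $\mathcal{O}_{\mathrm{min},f}$ as the smallest order containing $f\mathcal{O}_K$ with $S_{\mathcal{O}}=I_{\reflexfield}$, and observing that the chain $(A_k)$ is decreasing, so that only the inclusion $A_{k+1}\subset A_{k+2}$ is at issue. But that hard inclusion is exactly where your argument breaks down. The identity you propose to ``work out'', namely $\mathcal{O}_{\mathrm{min},p^{k+1}}+p^{k+2}\mathcal{O}_K=\mathcal{O}_{\mathrm{min},p^{k+2}}$, is not a lemma you can establish and then apply: since $p^{k+2}\mathcal{O}_K\subset p^{k+1}\mathcal{O}_K\subset\mathcal{O}_{\mathrm{min},p^{k+1}}$ always holds, its left-hand side is just $A_{k+1}$, so the ``identity'' is literally the statement $A_{k+1}=A_{k+2}$ you are trying to prove --- and as a general statement (without the hypothesis $A_{k+1}=A_k$) it is false, since the paper's computations exhibit chains with $A_1\subsetneq A_0$. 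The remark that $A_{k+1}$ ``sees all ray classes mod $p^{k+2}$ once saturation kicks in'' is not an argument, and nowhere do you use the hypothesis $A_{k+1}=A_k$ for anything beyond the triviality $p^{k}\mathcal{O}_K\subset A_{k+1}$.

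The paper supplies the missing ingredient. The compatibility that actually follows from your intrinsic characterisation goes in the other direction: setting $A:=A_{k+2}$, one has $A_n=A+p^{n}\mathcal{O}_K$ for all $n\le k+2$ (the right-hand side is an order containing $p^{n}\mathcal{O}_K$ whose $S$-group is all of $I_{\reflexfield}$, hence it contains $A_n$ by minimality; conversely $A_n$ contains both $A$ and $p^{n}\mathcal{O}_K$). The hypothesis then reads $A+p^{k}\mathcal{O}_K=A+p^{k+1}\mathcal{O}_K$, and the quotient $(A+p^{k+1}\mathcal{O}_K)/A$ is killed by $p$ because $p^{k+2}\mathcal{O}_K\subset A$. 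Multiplying $A+p^{k}\mathcal{O}_K$ by $p$ therefore lands inside $A$, which gives $p^{k+1}\mathcal{O}_K\subset A$ and hence $A_{k+1}=A+p^{k+1}\mathcal{O}_K=A=A_{k+2}$. This short $p$-torsion argument is the real content of the lemma and is absent from your proposal.
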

\begin{proof}
Let $A=A_{k+2}$. It suffices to prove $A=A_{k+1}$.
Note that for $n\leq k+2$, we have $A_n=A+p^{n}\Omax$.
In particular, we have $A \subset A_{k+1}=A+p^k\Omax$,
where the quotient for the inclusion is
a power of $(\ZZ/p\ZZ)$.
Therefore, multiplying on the right with~$p$
reverses the inclusion, so $A\supset pA + p^{k+1}\Omax$,
so $A\supset A+p^{k+1}\Omax$, which is
what we needed to show.
\end{proof}
For our list of $19$ fields,
it turns out that the chain always stabilises at
$p^k=1$, except in the case $p=2$ for $7$ of the fields,
where it stabilises at $2^1$ with $[\Omax:\Ominf{2}]\in\{2,4\}$.
In particular, as no odd prime power greater than one appears, we have
$\Ominf{f} = \Ominf{2^k}$
for $k\in\{0,1\}$ for all our $19$ fields~$K$.
For the $7$ fields with $k=1$,
we compute all non-maximal superorders
of $\Ominf{2}$ and their
principally polarised ideal classes using Sage.
We can check the existence of $(2,2)$-isogenies in a proven
manner on the level of these principally polarised
ideal classes, as the polarised complex tori corresponding to
$(\mathfrak{a}_1,\xi_1)$ and $(\mathfrak{a}_2,\xi_2)$
are $(\ell,\ell)$-isogenous if and only if there exists $\mu\in K^\times$ with
$\mathfrak{a}_1\subset\mu^{-1}\mathfrak{a}_2$
and $\xi_1 = \ell\mu\overline{\mu}\xi_2$.
The computations above yield the following result.
\begin{lemma}
Each principally polarised ideal class with multiplier
ring a non-maximal order~$\O$ with $\kernel{\O}=I_{\reflexfield}$
in one of our $19$ fields~$K\not\cong\QQ(\zeta_5)$
is $(2,2)$-isogenous to a unique principally polarised
ideal class of the maximal order of~$K$.\qed
\end{lemma}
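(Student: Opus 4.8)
The plan is to reduce the statement to an explicit finite computation over the $7$ fields that survive the earlier analysis, using the characterisation of $(\ell,\ell)$-isogenies already stated in the excerpt.

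First I would unpack what has already been established just above the statement. For each of our $19$ fields $K\not\cong\QQ(\zeta_5)$, Lemma~\ref{lem:Omin} and the stabilisation lemma show that the only non-maximal order $\O$ with $S_\O=I_{\reflexfield}$ arises for $p=2$, namely $\O\supset \O_{\text{min},2}=\mathcal{O}_{\text{min}, 2^1}$, and this happens for exactly $7$ of the fields, with $[\mathcal{O}_K:\O_{\text{min},2}]\in\{2,4\}$; for the other $12$ fields there is no such non-maximal order at all and the statement is vacuous. So I would fix one of the $7$ fields, enumerate every order $\O$ with $\O_{\text{min},2}\subseteq \O\subsetneq \mathcal{O}_K$ (a finite list, computed in Sage as noted in the text), and for each such $\O$ enumerate the principally polarised ideal classes $(\mathfrak{a},\xi)$ whose multiplicator ring is exactly $\O$, following \cite[Section~4.3]{broker-lauter-streng}.

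Next I would apply the isogeny criterion recalled in the excerpt: the polarised tori attached to $(\mathfrak{a}_1,\xi_1)$ and $(\mathfrak{a}_2,\xi_2)$ are $(\ell,\ell)$-isogenous precisely when there is $\mu\in K^\times$ with $\mathfrak{a}_1\subset\mu^{-1}\mathfrak{a}_2$ and $\xi_1=\ell\mu\overline{\mu}\,\xi_2$. Taking $\ell=2$, $(\mathfrak{a}_1,\xi_1)$ ranging over the non-maximal classes and $(\mathfrak{a}_2,\xi_2)$ ranging over the (finitely many) principally polarised ideal classes of $\mathcal{O}_K$, this is a decidable condition on a finite set of pairs: one searches for the finitely many $\mu$ with $\mathfrak{a}_1\subset \mu^{-1}\mathfrak{a}_2$ (equivalently $\mu\in \mathfrak{a}_2\mathfrak{a}_1^{-1}$ up to the relevant index bound) and checks the scaling relation on $\xi$. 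Existence for each non-maximal class is what the computation reports; uniqueness means that for a fixed non-maximal class exactly one maximal class satisfies the relation, which again is just reading off the finite incidence data. Since $[\mathcal{O}_K:\O]$ divides $4$ and $\ell=2$, the index of $\mathfrak{a}_1$ in $\mu^{-1}\mathfrak{a}_2$ is forced to be a power of $2$ bounded by $2^{\dim}=16$, keeping the search finite and small.

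The main obstacle is not conceptual but organisational: one must be sure that the enumeration of superorders of $\O_{\text{min},2}$, the enumeration of principally polarised ideal classes with each prescribed multiplicator ring, and the matching against the maximal-order classes are all complete and correctly implemented, since the proof is ultimately "by computation" and the statement asserts both \emph{existence} and \emph{uniqueness} of the isogenous maximal class. Concretely, the delicate point is checking uniqueness: one must verify there is no second maximal-order class $(\mathfrak{a}_2',\xi_2')$ with a valid $\mu'$, which requires running the criterion against \emph{all} maximal classes, not stopping at the first hit. Everything else — the reduction to $7$ fields, the bound $[\mathcal{O}_K:\O]\in\{2,4\}$, and the shape of the isogeny criterion — is handed to us by the preceding lemmas and by the general theory, so the argument is: \emph{the finitely many cases were enumerated and the criterion checked in each, as recorded in the Sage file \cite{recip}, yielding the asserted existence and uniqueness}; hence the lemma.
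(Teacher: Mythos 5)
Your proposal takes essentially the same route as the paper, which proves this lemma purely by the explicit Sage computation described in the surrounding text (hence the end-of-proof symbol attached directly to the statement): enumerate the superorders of $\mathcal{O}_{\text{min},2}$ for the $7$ relevant fields, list their principally polarised ideal classes, and test the stated $(\ell,\ell)$-isogeny criterion against \emph{all} classes of the maximal order to get both existence and uniqueness. The only quibble is your bound ``$2^{\dim}=16$'' on the lattice index $[\mu^{-1}\mathfrak{a}_2:\mathfrak{a}_1]$, which for a $(2,2)$-isogeny of abelian surfaces is $\ell^2=4$; this is a slip in an aside about the search space and does not affect the argument.
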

This proves Theorem~\ref{thm:classification2} for the fields in Table~\ref{tab:otherfields}:
as the (unique $(2,2)$-isogenous) curves with CM by the maximal order are not stable under
$\mathrm{Gal}(\QQbar/\QQ)$, neither are the curves with CM by the non-maximal orders.

For the $13-1=12$ fields $K\not\cong\QQ(\zeta_5)$
in Table~\ref{tab:provingwamelen},
we used the AVIsogenies~\cite{avisogenies} Magma~\cite{magma}
package to compute all principally polarised abelian surfaces over~$\QQ$
that are $(2,2)$-isogenous to those of Theorem~\ref{thm:wamelenlist}.
This yielded no curves not covered by Theorem~\ref{thm:wamelenlist},
hence proves Theorem~\ref{thm:classification2} outside of the case $K=\QQ(\zeta_5)$.

This leaves the field $K=\QQ(\zeta_5)$, where Lemma~\ref{lem:Omin}
does not directly apply.
There we do have
$\O\supset \ZZ[\zeta_5^{e_i}\mu_i:i]+f\Omax$
with $0\leq e_i<5$, so the computations are still finite, but
a little more complicated. In the end, this yields 7 orders,
the indices of which all happen to be prime
powers (dividing $2^4$, $3^2$ or $5^2$ to be precise).
We compute the corresponding period matrices,
and
they all turn out to be related to $y^2=x^5+1$
by a $(3,3)$-isogeny, a $(5,5)$-isogeny or 
a chain of at most two $(2,2)$-isogenies.

In the case of the $(5,5)$-isogeny, we found a unique period matrix and evaluated the absolute Igusa
invariants in them with interval arithmetic
to an additive error less than $2^{-1}5^{-8}$.
These Igusa invariants are in~$\QQ$
and we were advised by Kristin Lauter
that the denominator bounds of Lauter and
Viray~\cite{lauter-viray} hold also for Igusa
invariants of curves with CM by non-maximal
orders of the form $\Omaxreal[\eta]$.
The relevant order is of that form
for $\eta=\zeta_5+3\zeta_5^3$
and we computed using their formulas
that the denominator of the Igusa invariants in~$\QQ$
divides~$5^8$.
Together, this proves correctness of the Igusa invariants
that we computed, hence proves correctness of the curve.

For the $(3,3)$-isogeny, we find using AVIsogenies
that over $\overline{\FF_{23}}$, there are
$40$ curves that are $(3,3)$-isogenous to $F:y^2=x^5+1$,
none of which have their moduli in $\FF_{23}$.
As these are the reductions of the $40$ curves over
$\QQbar$ that are $(3,3)$-isogenous to $F$,
we find that none of them are defined over~$\QQ$.

For the $(2,2)$-isogenies, we used the Richelot isogeny code of AVIsogenies
directly over~$\QQ$. It returned the curve $C$
from Theorem~\ref{thm:classification2}, which is $(2,2)$-isogenous
to~$F$, and no other curve over $\QQ$ that is $(2,2)$-isogenous
to~$C$ or~$F$.
This shows that~$F$
has CM by a non-maximal order $\O\supset 1+2\Omax$.
The only such order with $\kernel{\O}=I_{\reflexfield}$
for which period matrices exist is the one given in Theorem~\ref{thm:classification2}.
A Sage computation with
principally polarised ideal classes shows that every curve
with CM by an order $\O$ with $\kernel{\O}=I_{\reflexfield}$
and even index in $\Omax$ is $(2,2)$-isogenous to~$C$ or~$F$,
and we check using AVIsogenies that no such curve exists
other than $C$ and~$F$ themselves;
this proves Theorem~\ref{thm:classification2}
for the one remaining field $\QQ(\zeta_5)$.

\subsection{Computation of endomorphism rings}
\label{sec:applicationcompend}

Let $\A$ be an ordinary principally polarised abelian variety defined over a
finite field $k$ of cardinality $q$. The characteristic polynomial of its
Frobenius endomorphism $\pi$ may be computed in polynomial time in $\log(q)$
\cite{pila}; this gives the CM-field $K=\QQ(\pi)$ of which the endomorphism
ring of $\A$ is an order $\End(\A)$ containing $\ZZ[\pi,\overline\pi]$ and
stable under complex conjugation \cite{waterhouse}.

The order $\End(\A)$ is a finer invariant
than the characteristic polynomial of~$\pi$,
but until recently all known methods for 
computing it had
exponential complexity. The sub-exponential method Bisson and Sutherland
obtained for elliptic curves \cite{endomorphism,grh-only} proved to be very efficient
and has since enabled various
applications
\cite{drew-modpol,drew-hilbert}. Bisson then generalised it to abelian
varieties \cite{end-g2}, which also improved existing applications
\cite{lauter-robert}, but his generalisation relies on several unproven
heuristic assumptions. Using our results, we will show that a particular one
was false in general, but that it holds for almost all abelian surfaces of
typical families.

\subsubsection{Background}

To evaluate the endomorphism ring $\End(\A)$ of an abelian surface $\A$ such as
above, 
Bisson \cite{end-g2}
uses isogeny computation
to determine the structure of the polarised class group
$\CCC(\End(\A))$.
The endomorphism ring $\End(\A)$ is
then identified amongst orders $\O$ satisfying $\CCC(\O)=\CCC(\End(\A))$ by
computing it locally at primes that divide the index between such orders. 

Fix $f=[\Omax:\ZZ[\pi,\overline\pi]]$ and restrict to ideals coprime to $f$
so that the groups $P_\O\subset I_\O\subset
I_{\Omax}$ may be compared as $\O$ ranges through candidate endomorphism rings.
Rather than computing $\CCC(\O)=I_\O/P_\O$ by finding
many elements of $P_\O$, this method fixes an arbitrary CM-type
$\Phi$ of $K$ and only uses elements of $\norm_{\Phi^r}(\norm_\Phi(p_\O))$,
where $p_\O$ stands for the group of principal ideals of $\O$, since determining
whether such elements are trivial in $\CCC(\End(\A))$
can be done in sub-exponential time in $\log(q)$.
Then, it incurs a
polynomial cost in $v_\ell=\ell^{\val_\ell(f)}$ to compute the endomorphism
ring locally at $\ell$ using \cite{eisentrager-lauter} for each prime factor
$\ell$ of
\begin{equation}\label{eq:annoying-primes}
\lcm\left\{[\O+\Oother:\O\cap\Oother] :
\begin{array}{l}
\norm_\Phi(p_\O)\subset \kernel{\Oother} \\
\norm_\Phi(p_{\Oother})\subset \kernel{\O}
\end{array}\right\},
\end{equation}
where $\O$ and $\Oother$ range through all orders of $K$ containing
$\ZZ[\pi,\overline\pi]$ stable under complex conjugation, and the map
$\norm_\Phi$ takes an ideal of $p_\O$
(respectively~$p_{\Oother}$) to $I_{K^r}$.

At first, Bisson expected \eqref{eq:annoying-primes} to be uniformly bounded
for all quartic CM-fields~$K$; the cost of this local computation would then be
negligible. Example~\ref{ex:realindexrequired} disproves this expectation since
it gives pairs of orders with $\kernel{\O}=\kernel{\Omin}$ and arbitrarily large indices
$[\O:\Omin]$. However, we will now see that, under certain heuristics,
\eqref{eq:annoying-primes} is almost always small.

\subsubsection{Bounding the cost of local endomorphism ring computations}

Our results would apply directly if \eqref{eq:annoying-primes} had
$\norm_\Phi(p_\O)$ replaced by $\kernel{\O}$ and $\norm_{\Phi}(p_{\Oother})$ by~$\kernel{\Oother}$;
nevertheless, those two groups are closely related as the following lemma
shows.

\begin{lemma}
For any order $\O$ in a quartic non-biquadratic
CM-field $K$ and any CM-type $\Phi$ of $K$,
we have
\[
(\kernel{\O}\cap\norm_\Phi(I_\O))^2\subset\norm_\Phi(p_\O)\subset \kernel{\O}.
\]
\end{lemma}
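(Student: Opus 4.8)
The plan is to prove the two inclusions separately. The inclusion $\norm_\Phi(p_\O)\subset S_\O$ should be essentially immediate: if $(x)\in p_\O$ is a principal ideal of $\O$ coprime to $f$, then under the map \eqref{eq:reflexnormccc} applied to $\norm_\Phi((x))$ one should land on the pair $(\norm_{\reflextype}(\norm_\Phi((x))), \norm_{\reflexfield/\QQ}(\norm_\Phi((x))))$. Using the identity $\norm_{\reflextype}(\norm_\Phi(\aaa)) = \aaa\overline\aaa\cdot$(a principal ideal) together with the multiplicativity of the type norms (cf.\ the formulas cited in the proof of Lemma~\ref{lemma:squares}), one checks that this pair is of the form $(y\O, y\overline y)$ for a suitable $y\in K^\times$, hence lies in $P_\O$; therefore $\norm_\Phi((x))$ is in the kernel $S_\O$ of \eqref{eq:reflexnormccc}. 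The non-biquadratic hypothesis enters here to guarantee that there are no unexpected relations between the conjugates (equivalently, that $\reflexfield$ behaves as in the cyclic/non-Galois quartic cases).

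For the harder inclusion $(S_\O\cap\norm_\Phi(I_\O))^2\subset\norm_\Phi(p_\O)$, I would start with an element $\aaa\in I_\O$ such that $\norm_\Phi(\aaa)\in S_\O$, i.e.\ $(\norm_{\reflextype}(\norm_\Phi(\aaa)), \norm_{\reflexfield/\QQ}(\norm_\Phi(\aaa)))\in P_\O$. Using again $\norm_{\reflextype}(\norm_\Phi(\aaa)) = \aaa^2(\aaa\overline\aaa)^\sigma$ (with $\sigma$ the nontrivial automorphism of $K_0$, as in Lemma~\ref{lemma:squares}), the condition says that $\aaa^2(\aaa\overline\aaa)^\sigma$ is a principal $\O$-ideal, generated by some $x$ with $x\overline x$ matching the $\QQ$-norm. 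Now square: $\norm_\Phi(\aaa)^2 = \norm_\Phi(\aaa^2)$, and applying the squaring trick of equation \eqref{eq:reflex-norm-square} from Lemma~\ref{lemma:squares} to $\aaa^2\in I_\O$ expresses $(\aaa^2, \beta)^2$ (for the appropriate totally positive $\beta$) as a product of a principal pair and the reflex type norm of $\norm_\Phi(\aaa^2)$. Combining these, $\norm_\Phi(\aaa)^4$ — or, after being careful with which powers are needed, $\norm_\Phi(\aaa)^2$ — can be exhibited as $\norm_\Phi$ of a principal ideal of $\O$, using that $\aaa\overline\aaa$ is itself principal in $\O$ (since $\aaa\overline\aaa = \alpha\O$ by definition of membership in a polarised object, once we know $\aaa$ contributes to $I_\O$) so that the ``$(\aaa\overline\aaa)^\sigma$'' factors are under control.

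The main obstacle I anticipate is bookkeeping the difference between ``principal as an ideal'' and ``principal as a polarised pair'': the kernel $S_\O$ is defined via the pair-valued map into $\CCC(\O)$, so landing in $P_\O$ requires matching \emph{both} the ideal and the totally-positive-norm component, and one must track the compatibility of $\norm_{\reflexfield/\QQ}$ with $\alpha$ and $x\overline x$ throughout. A second subtlety is that $\norm_\Phi(I_\O)$ need not be all of $I_{\reflexfield}(f)$, so one cannot freely invert the type norm; the squaring is precisely what makes the relevant classes land in the image, which is why the exponent-$2$ loss is unavoidable. I would organise the computation so that the ideal parts are reconciled first (using the cited $\norm_{\reflextype}\circ\norm_\Phi$ formula), and only then fix up the positive-norm component by absorbing it into the choice of generator $x$, invoking that $K$ has no nontrivial totally positive units obstruction beyond signs in the non-biquadratic case.
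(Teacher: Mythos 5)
Your overall strategy coincides with the paper's: both inclusions rest on the identity $\norm_{\reflextype}(\norm_\Phi(\aaa))=\aaa^2(\aaa\overline{\aaa})^\sigma$ together with equation \eqref{eq:reflex-norm-square}. Your first inclusion is fine once the misquoted identity is corrected: the reflex type norm of $\norm_\Phi(\aaa)$ is $\aaa^2\cdot(\aaa\overline{\aaa})^\sigma$, i.e.\ $\aaa^2$ times a principal ideal, not $\aaa\overline{\aaa}$ times one. For a principal $\aaa=x\O$ this does not affect the conclusion, since $y=x^2(x\overline{x})^\sigma$ satisfies $y\O=\norm_{\reflextype}(\norm_\Phi(x\O))$ and $y\overline{y}=N_{K/\QQ}(x)^2=\norm_{\reflexfield/\QQ}(\norm_\Phi(x\O))$, so the image under \eqref{eq:reflexnormccc} lands in $P_\O$.

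The genuine gap is in the second inclusion. You apply \eqref{eq:reflex-norm-square} to $\aaa^2$, which expresses $(\aaa^2,\beta)^2=(\aaa^4,\beta^2)$ in terms of the reflex type norm pair of $\norm_\Phi(\aaa)^2$; this shows only that $\aaa^4$ is principal, hence only that $\bbb^4\in\norm_\Phi(p_\O)$ --- the exponent-$4$ statement, not the claimed exponent-$2$ one. Your parenthetical ``after being careful with which powers are needed'' does not repair this, because the doubling is structural in your choice of input. The fix is to apply \eqref{eq:reflex-norm-square} to $(\aaa,\alpha)$ itself: the first factor on the right-hand side lies in $P_\O$ unconditionally, and the second factor is exactly the image of $\bbb=\norm_\Phi(\aaa)$ under \eqref{eq:reflexnormccc}, which lies in $P_\O$ precisely because $\bbb\in S_\O$. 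Hence $(\aaa,\alpha)^2=(\aaa^2,\alpha^2)\in P_\O$, so $\aaa^2\in p_\O$ and $\bbb^2=\norm_\Phi(\aaa^2)\in\norm_\Phi(p_\O)$. Note also that your two anticipated ``obstacles'' dissolve under this arrangement: the totally-positive-norm components are matched automatically because \eqref{eq:reflex-norm-square} is an identity of pairs in $I_\O$, and the non-surjectivity of $\norm_\Phi$ is irrelevant here --- the hypothesis $\bbb\in\norm_\Phi(I_\O)$ already hands you a preimage $(\aaa,\alpha)$, and the unavoidable square comes from $\norm_{\reflextype}\norm_\Phi(\aaa)=\aaa^2(\aaa\overline{\aaa})^\sigma$ returning $\aaa^2$ rather than $\aaa$, not from any need to land in the image of $\norm_\Phi$.
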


\begin{proof}
Consider the composition
$
I_{K}   \stackrel{\norm_{\Phi  }}{\longrightarrow}
I_{K^r} \stackrel{\norm_{\Phi^r}}{\longrightarrow}
I_{\Omax}
$
where $I_K$ and $I_{K^r}$ are the groups of invertible fractional ideals of
$\Omax$ and $\Omaxreflex$ respectively, and recall from the proof of
Lemma~\ref{lemma:squares} that $\norm_{\Phi^r}\norm_{\Phi}(\aaa)
=\tau(\aaa\overline\aaa)\aaa^2$;
if $\aaa$ admits a generator coprime to $f$
in $\O$, its image through $\norm_{\Phi^r}\circ\norm_\Phi$ thus also does.
Therefore the type norm maps $p_\O$ to a subset of $\kernel{\O}$.

Let $\bbb$ lie in the intersection of $\kernel{\O}$ and $\norm_\Phi(I_\O)$. This means
that, for some $(\aaa,\alpha)\in I_\O$, we have
$\bbb=\norm_\Phi(\aaa,\alpha):=\norm_\Phi(\aaa)$
and $\norm_{\Phi^r}\norm_{\Phi}(\aaa)\in P_\O$. Equation
\eqref{eq:reflex-norm-square} then states that $(\aaa,\alpha)^2$ belongs to
$P_\O$; by composing with $\norm_\Phi$ we find that $\bbb^2$ lies in
$\norm_\Phi(P_\O)$ and hence in $\norm_\Phi(p_\O)$.
\end{proof}

Therefore $\norm_\Phi(p_\O)$ is not much different from $\kernel{\O}$; in fact,
similarly to Theorem~\ref{thm:firstmaintheorem} we have:

\begin{corollary}
Let $\O$ and $\Oother$ be two orders satisfying the conditions of
\eqref{eq:annoying-primes}. The indices $[\O:\Omin]$ and $[\Oreal:\Ominreal]$
have the same valuation at all primes $\ell>41$, and further $\ell>7$ when
$\O\not\cong\ZZ[\zeta_5]$.
\end{corollary}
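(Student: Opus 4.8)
The plan is to reduce this corollary to Proposition~\ref{prop:valuation} via the lemma proved just above, much as Theorem~\ref{thm:firstmaintheorem} itself was deduced. Concretely, suppose $\O$ and $\O'$ satisfy the two conditions of \eqref{eq:annoying-primes}, namely $\norm_\Phi(p_\O)\subset S_{\O'}$ and $\norm_\Phi(p_{\O'})\subset S_\O$. First I would observe that $\norm_\Phi$ sends $p_\O$ into $\norm_\Phi(I_\O)$ trivially, so combining the two hypotheses with the inclusion $(S_\O\cap\norm_\Phi(I_\O))^2\subset\norm_\Phi(p_\O)$ from the preceding lemma yields a ``squared'' version of the hypothesis of Proposition~\ref{prop:kernel}. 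More precisely, tracing through the proof of Proposition~\ref{prop:kernel}: that proof only uses $S_\O\subset S_{\O'}$ to obtain the inclusion of kernels $\ker(\CCC(\Omin)^2\to\CCC(\O))\subset\ker(\CCC(\Omin)^2\to\CCC(\O'))$ coming from Lemma~\ref{lemma:squares}, and the squares of $\CCC(\Omin)$ land in the image of \eqref{eq:reflexnormccc}, which is exactly $\norm_{\reflextype}\circ\norm_\Phi$ applied to ideals. So the point is that $\norm_\Phi(p_\O)\subset S_{\O'}$ already forces $\ker(\CCC(\Omin)^2\to\CCC(\O))^2$ (an extra square, harmless) to sit inside $\ker(\CCC(\Omin)^2\to\CCC(\O'))$, and symmetrically. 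The extra power of two only multiplies the relevant exponents by a bounded amount.

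The key steps, in order, would be: (i) from $\norm_\Phi(p_\O)\subset S_{\O'}$ and the lemma, deduce $(S_\O\cap\norm_\Phi(I_\O))^2\subset S_{\O'}$, hence that the image of $\CCC(\Omin)^4$ under the map to $\CCC(\O)$ lies in the kernel of the map to $\CCC(\O')$ (using Lemma~\ref{lemma:squares} twice: squares of $\CCC(\Omin)$ are type-norms, so $\CCC(\Omin)^4\subset\norm_\Phi$-image squared); (ii) run the same snake-lemma/relative-norm identification of Lemma~\ref{lem:kernels-contained} to rewrite everything in terms of the map $\psi$ of \eqref{eq:psi}, concluding that $\ker\psi$ now has exponent dividing~$4$ rather than~$2$ (again up to the factor $\#\mu_\O$ when $\O\cong\ZZ[\zeta_5]$); (iii) re-examine the proof of Proposition~\ref{prop:valuation} with ``exponent at most $2$'' replaced by ``exponent at most $4$'': the only place this matters is the arithmetic claim that the coprime-to-$p$ cyclic factors $p^f-1$, $(p^g-1)/(p^h-1)$ of the domain cannot divide $4\#C$ (instead of $2\#C$) unless they equal $p\mp 1$. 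For $p\geq 7$ (and $p\geq 43$ in the $\ZZ[\zeta_5]$ case, where one divides $40\#C$) the same elementary estimate goes through, since $p^2-1 > 4(p+1)$ as soon as $p\geq 7$, etc. Finally, (iv) combine with the symmetry $\O\leftrightarrow\O'$: $[\O+\O':\Omin]$ divides $[\O:\Omin][\O':\Omin]$, so a prime $\ell$ dividing the index in \eqref{eq:annoying-primes} must divide $[\O:\Omin]$ or $[\O':\Omin]$, and for such $\ell$ outside the exceptional set the valuation equals that of the corresponding real index, which in turn divides $[\Oreal:\Ominreal]\cdot[\O'_0:\Ominreal]$; one then observes that $[\O+\O':\Omin]$ and $[\Oreal+\O'_0:\Ominreal]$ have equal $\ell$-valuation, as desired.

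The main obstacle I expect is step (iii): verifying that the numerical inequalities in the proof of Proposition~\ref{prop:valuation} survive the replacement of the bound $2\#C$ by $4\#C$ (resp.\ $40\#C$ in the cyclotomic case), and pinning down the resulting thresholds $\ell>7$ and $\ell>41$. One needs $p^f - 1 \nmid 4\#C$ for $f\in\{2,4\}$ except in the degenerate equality cases, and $\#C\in\{1,p-1,p+1\}\cdot\#A_{0,p}$; since $p^2-1 = (p-1)(p+1)$, divisibility by $4(p+1)$ would force $p-1\mid 4$, i.e.\ $p\leq 5$, so $p\geq 7$ suffices, and similarly $p^4-1$ is far too large. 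For $\O\cong\ZZ[\zeta_5]$ one instead needs $p-1\mid 40$, ruling out $p>41$. The only subtlety is bookkeeping the factor $\#\mu_\O=10$ together with the extra square; but this is exactly parallel to the ``$\O\not\cong\ZZ[\zeta_5]$'' vs.\ general dichotomy already handled in Proposition~\ref{prop:valuation}, so no genuinely new idea is required --- the corollary is really just ``Theorem~\ref{thm:firstmaintheorem} with a harmless extra factor of $2$ in all exponents,'' and the primes $7$ and $41$ are what one gets after tracking that factor through.
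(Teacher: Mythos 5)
Your proposal follows the paper's own proof essentially verbatim: both use the preceding lemma to get $(S_\O\cap\norm_\Phi(I_\O))^2\subset S_{\O'}$, deduce $\ker(\CCC(\Omin)\to\CCC(\O))^4\subset\ker(\CCC(\Omin)\to\CCC(\O'))$ via \eqref{eq:reflex-norm-square}, rerun Proposition~\ref{prop:kernel} with fours in place of twos to conclude that the kernel \eqref{eq:kernel} has exponent at most four, and then rerun Proposition~\ref{prop:valuation} with $4\#C$ (resp.\ $40\#C$ in the cyclotomic case) to obtain the thresholds $7$ and $41$. Your explicit divisibility checks in step (iii) merely spell out what the paper leaves implicit, so this is the same argument.
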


\begin{proof}
By the above lemma, the conditions of \eqref{eq:annoying-primes} imply
$(\kernel{\O}\cap\norm_\Phi(I_\O))^2\subset \kernel{\Oother}$; as in
Section~\ref{sec:ideals-to-elements} we deduce
\[
\ker(\CCC(\Omin)\to\CCC(\O))^4\subset\ker(\CCC(\Omin)\to\CCC(\Oother)).
\]
Indeed, let $(\aaa,\alpha)^4$ represent a class of the first kernel. By
\eqref{eq:reflex-norm-square} we obtain
$(\aaa,\alpha)^4=\norm_{\Phi^r}\norm_\Phi(\aaa)^2$ in $\CCC(\Omin)$ and,
by assumption, $\norm_\Phi(\aaa)^2$ belongs to
$(\kernel{\O}\cap\norm_\Phi(I_\O))^2$. In particular, we have
$\norm_{\Phi^r}\norm_\Phi(\aaa)^2\in\norm_{\Phi^r}(\kernel{\Oother})$ so the class of
$(\aaa,\alpha)^4$ in $\CCC(\Omin)$ becomes trivial in $\CCC(\Oother)$.

Now, using the same proof as for Proposition~\ref{prop:kernel} (albeit
replacing two's by four's in the exponents) we deduce that, for any two orders
$\O$ and $\Oother$ satisfying the conditions of \eqref{eq:annoying-primes}, the
kernel \eqref{eq:kernel} is of exponent at most four. The proof of
Proposition~\ref{prop:valuation} then carries through for all primes $p>7$
assuming $\O\not\cong\ZZ[\zeta_5]$, and $p>41$ in general.
\end{proof}

As a consequence, we now establish that the sum $\sum_\ell v_\ell$ where $\ell$
ranges through prime factors of \eqref{eq:annoying-primes} is almost always
small, assuming that the $v_i$ satisfy typical divisibility conditions.

\begin{proposition}\label{prop:density-zero}
Let $(\A_i/\FF_{q_i})_{i\in\NN}$ be a sequence of ordinary abelian varieties
defined over fields of monotonously increasing cardinality $q_i\to\infty$. Denote by
$v_i=[\O_{\QQ(\pi_i)}:\ZZ[\pi_i,\overline\pi_i]]$ their conductor gaps, and by
$n_i=\norm_{K_0/\QQ}(\Delta_{K/K_0})$ the norm of the relative discriminant of
their CM-fields $K=\QQ(\pi_i)$. Assume that there exists a constant $C$ such
that, for all positive integers $u$ and $m$:
\begin{itemize}
\item the proportion of indices $i<m$ for which $u|v_i$ is at most $C/u$;
\item the proportion of indices $i<m$ for which $u|v_i$ and $u|n_i$ is at most $C/u^2$.
\end{itemize}
Then, for any $\tau>0$, all prime factors $\ell$ of \eqref{eq:annoying-primes}
are such that $v_{i,\ell}=\ell^{\val_\ell v_i}$ is smaller than $L(q_i)^\tau$,
except for a zero-density subset of indices $i\in\NN$,
where $L(x)=\exp\sqrt{\log x\cdot\log\log x}$.
\end{proposition}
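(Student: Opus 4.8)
The goal is to show that, outside a density-zero set of indices $i$, every prime factor $\ell$ of the lcm in \eqref{eq:annoying-primes} satisfies $v_{i,\ell}<L(q_i)^\tau$. By the Corollary just proved, the prime factors $\ell$ of \eqref{eq:annoying-primes} that can possibly be ``large'' are controlled: for $\ell>41$ (and $\ell>7$ in the generic case) the valuation of the index $[\O+\O':\O\cap\O']$ at $\ell$ equals that of the corresponding real index $[\O_0:\O_0']$. So I would split the primes $\ell$ dividing \eqref{eq:annoying-primes} into two ranges: the bounded set $\ell\le 41$, and the range $\ell>41$. For the bounded primes, $v_{i,\ell}$ is a power of a prime $\le 41$ dividing $v_i$, and I want to say this power is small for most $i$; for the large primes, I use the Corollary to replace the CM-order index by the real-quadratic index, which is divisible by such an $\ell$ only if $\ell$ ramifies or is otherwise forced to divide $\Delta_{K/K_0}$, i.e.\ only if $\ell\mid n_i$ — and simultaneously $\ell\mid v_i$ — which is the event the second hypothesis makes rare.

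\textbf{Step 1: reduce to a statement about $v_i$ and $n_i$.} First I would make precise the claim that a prime $\ell$ divides \eqref{eq:annoying-primes} with $\val_\ell$ large only under one of the following: (a) $\ell\le 41$ and $\ell^{k}\mid v_i$ for the relevant exponent $k$; or (b) $\ell>41$, $\ell\mid v_i$, and $\ell\mid n_i$. Case (a) is immediate since any $\O,\O'$ between $\ZZ[\pi,\overline\pi]$ and $\O_K$ have index dividing $v_i$, so $v_{i,\ell}\mid v_i$. Case (b): by the Corollary, for $\ell>41$ we have $\val_\ell[\O+\O':\O\cap\O']=\val_\ell[\O_0+\O_0':\O_0\cap\O_0']$, and an index between two orders in the real quadratic field $K_0$ divisible by $\ell$ forces $\ell\mid\disc$ contributions; chasing through, if additionally $\ell\mid v_i$ one gets $\ell\mid n_i=N_{K_0/\QQ}(\Delta_{K/K_0})$. (I should double-check the exact divisibility bookkeeping here against Lemma~\ref{lem:relativeindex}, which relates $[\O_{K_0}:\O_0']^{2n}$ to $N_{K_0/\QQ}(\Delta_{K/K_0})[\O_K:\O']^2$; this is where the $n_i$ genuinely enters.)

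\textbf{Step 2: the counting argument.} Fix $\tau>0$ and set a threshold $T=T(q_i)=L(q_i)^\tau$. A prime power $\ell^k$ with $\ell^k\ge T$ dividing $v_i$ occurs, by the first hypothesis, for a proportion at most $C/\ell^k\le C/T$ of indices $i<m$; summing over the (at most $O(\log v_i)=O(\log q_i)$, hence harmlessly many after a dyadic decomposition) relevant prime powers $\ell^k\ge T$ shows the bad proportion from case (a) is $o(1)$ as $m\to\infty$ — here one must be slightly careful because $v_i$ itself grows, but $\sum_{\ell^k\ge T}1/\ell^k = O(1/T)\cdot(\text{number of }\ell^k)$ and the number of prime powers up to $v_i$ is $O(\log q_i)$, while $T=L(q_i)^\tau=\exp(\tau\sqrt{\log q_i\log\log q_i})$ dominates any power of $\log q_i$, so the product tends to $0$. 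For case (b), the event ``$\exists\,\ell>41$ with $\ell^k\ge T$, $\ell\mid v_i$, $\ell\mid n_i$'' has proportion at most $\sum_{\ell\ge T^{1/1}}C/\ell^2 = O(C/T)$ by the second hypothesis (summing over primes $\ell\ge T$, using $\sum_{\ell\ge T}1/\ell^2=O(1/(T\log T))$), which is again $o(1)$. Taking $m\to\infty$ (and using that $q_i\to\infty$ monotonically so that $T(q_i)\to\infty$) gives a set of density zero outside which all prime factors $\ell$ of \eqref{eq:annoying-primes} have $v_{i,\ell}<T=L(q_i)^\tau$.

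\textbf{Main obstacle.} The delicate point is Step~1(b): pinning down that a large prime $\ell$ contributing to \eqref{eq:annoying-primes} must divide both $v_i$ \emph{and} $n_i$, rather than just $v_i$. The Corollary only gives equality of $\ell$-valuations of the CM-order index and the real-order index; I then need the real-quadratic fact that $[\O_{K_0}:\O_0']$ is linked to $\Delta_{K/K_0}$ — concretely via Lemma~\ref{lem:relativeindex} — so that $\ell\mid[\O_0:\O_0']$ together with $\ell$ large (hence $\ell\nmid$ the constant $2^{40}3^{16}$) forces $\ell\mid N_{K_0/\QQ}(\Delta_{K/K_0})=n_i$. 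Getting the quantifiers and the "for the relevant exponent" bookkeeping exactly right, so that the second hypothesis (the $C/u^2$ bound) is what is actually invoked, is the crux; the subsequent density computation is routine analytic-number-theory estimation once the combinatorial reduction is in place.
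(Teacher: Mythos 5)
There is a genuine gap, and it sits exactly where you flag your ``main obstacle'': Step~1(b) asserts that \emph{every} prime $\ell>41$ dividing \eqref{eq:annoying-primes} must divide $n_i$. This deduction is only available when one of the two orders is locally maximal at~$\ell$: the divisibility $[\Omaxreal:\Ominreal]^{2n}\mid N_{K_0/\QQ}(\Delta_{K/K_0})\,[\Omax:\Omin]^2$ of Lemma~\ref{lem:relativeindex} involves the \emph{maximal} orders, so combining it with the Corollary to force $\ell\mid n_i$ requires $\O=\Omax$ locally at~$\ell$. That is guaranteed precisely when $\ell^2\nmid v_i$ (then there are only two orders locally at $\ell$ between $\ZZ[\pi,\overline\pi]$ and $\O_K$, so one of any two distinct ones is maximal), and can fail otherwise: Example~\ref{ex:realindexrequired} exhibits pairs of non-maximal orders with $S_\O=S_{\O'}$ and index $F$ for arbitrary odd $F$, with nothing tying $F$ to $\Delta_{K/K_0}$. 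Your proposed justification --- that an index between two orders of the real quadratic field divisible by $\ell$ forces $\ell\mid\disc$ --- is false, since real quadratic orders of every conductor exist.

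The paper therefore uses a trichotomy rather than your dichotomy: (i) $L(q_i)^\tau\le M$ for a fixed cutoff $M$; (ii) $\val_\ell v_i=1$ and $\ell>M$ divides $\gcd(v_i,n_i)$, where local maximality is automatic and the second ($C/u^2$) hypothesis yields the convergent tail $\sum_{\ell>M}C/\ell^2$; (iii) $\val_\ell v_i=\alpha\ge 2$ with $\ell^\alpha>M$, where $\ell\mid n_i$ is \emph{not} claimed and instead the first ($C/u$) hypothesis with $u=\ell^\alpha$ gives $\sum_{\alpha\ge 2}\sum_{\ell^\alpha>M}C/\ell^\alpha\to 0$ as $M\to\infty$. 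In your scheme, case (a) covers only $\ell\le 41$ and your case-(b) sum runs only over $\ell\ge T$, so the primes $41<\ell<T$ with $\val_\ell v_i\ge 2$ and $\ell^{\val_\ell v_i}\ge T$ are covered by neither. Note also that if one tried to absorb the $\val_\ell v_i=1$ primes into a union bound using only the first hypothesis, the resulting $\sum_\ell C/\ell$ diverges; this is why the $\ell\mid n_i$ deduction (hence the $C/u^2$ hypothesis) is indispensable for $\alpha=1$, but only there. Once the correct case split is in place, your Step~2 counting is in the right spirit and matches the paper's estimates.
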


\begin{proof}
Fix an index $i$ and let $\ell$ be a prime factor of $v_i$. If $\ell^2$ does
not divide~$v_i$, then, locally at $\ell$, out of two distinct orders containing
$\ZZ[\pi_i,\overline\pi_i]$, one must be maximal; by the proof of
Theorem~\ref{thm:relativeindex}, the quantity \eqref{eq:annoying-primes} thus
has no $\ell$-part unless $\ell\leq 41$ or $\ell|n_i$. As a consequence, for
any fixed integer $M\geq 41$, all pairs of indices $i<m$ and primes $\ell$ dividing
\eqref{eq:annoying-primes} such that $v_{i,\ell}>L(q_i)^\tau$ satisfy at
least one of the following conditions:
\begin{itemize}
\item the index $i$ satisfies $L(q_i)^\tau\leq M$;
\item $\val_\ell v_i=1$ and $\ell>M$ divides $\gcd(v_i,n_i)$;
\item $\val_\ell v_i>1$ and $v_{i,\ell}>M$ divides $v_i$.
\end{itemize}
Since $v_i<64 q_i^2$ \cite[Lemma~3.2]{end-g2} we have $v_{i,\ell}<64 q_m^2$ 
for all primes $\ell$ and indices $i<m$, thanks to the monotony of $q_i$. 
Hence the proportion of indices $i<m$ for which \eqref{eq:annoying-primes}
admits a prime factor $\ell$ such that $v_{i,\ell}>L(q_i)^\tau$ is bounded by
\[
\frac{\#\{i:L(q_i)^\tau\leq M\}}{m}+
\sum_{M<\ell<64 q_m^2}\frac{C}{\ell^2}+
\sum_{2\leq\alpha\leq6+2\log_2q_m}\left(\sum_{M^{1/\alpha}<\ell\leq M^{1/(\alpha-1)}}\frac{C}{\ell^\alpha}\right)
\]
where each term corresponds to one of the above conditions, and the variable
$\alpha$ represents $\val_\ell v_i$. Note that the bound
$\ell\leq M^{1/(\alpha-1)}$ in the last sum is here to prevent counting the same
prime $\ell$ for multiple values of $\alpha$.

By bounding each sum, we further deduce that the density of such indices
is less than
\[
\lim_{m\to\infty}\left(
\frac{\#\{i:L(q_i)^\tau<M\}}{m}+
\frac{C}{M}+
\sum_{\alpha>1}
\frac{C}{\alpha-1}\left(\frac{1}{M^{\frac{\alpha-1}{\alpha}}}-\frac{1}{M}\right)
\right)
\]
For $M$ large enough, the second and third terms can be made arbitrarily small;
then, the first term vanishes as $m$ goes to infinity.
The above density is thus zero.
\end{proof}

\subsubsection{Conclusion}

The conditions of Proposition~\ref{prop:density-zero} on the integers $v_i$ are
typical divisibility properties
that are satisfied by integers drawn uniformly at random (say,
from $\{1,\ldots,64q_i^2\}$) and independently from the~$n_i$.
Furthermore, they are experimentally observed to hold for the two main families
which are of interest to~\cite{end-g2},
namely random isomorphism classes of
abelian varieties and random isomorphism classes of abelian varieties with
complex multiplication by a prescribed field, defined over finite fields of
increasing cardinality.

The method of \cite{end-g2} rests on several unproven heuristics, including the
generalised Riemann hypothesis and that the quantity \eqref{eq:annoying-primes}
remains small. Although the latter is false in general,
Proposition~\ref{prop:density-zero} shows that it holds for almost all abelian
varieties of families such as above. Under heuristic assumptions, it therefore
establishes that the method of \cite{end-g2} terminates in average
sub-exponential time. 

\section*{Acknowledgements}

The authors would like to thank David Gruenewald,
Kristin Lauter and Bianca Viray
for helpful discussions.
The first-named author further wishes to thank the Mathematics Research Centre of the
University of Warwick for their hospitality while part of this work was done.
The second-named author acknowledges the support of EPSRC grant EP/G004870/1
and NWO through DIAMANT and Vernieuwingsimpuls.

\bibliographystyle{plain}
\bibliography{document}

\begin{thebibliography}{10}

\bibitem{grh-only}
Gaetan Bisson.
\newblock Computing endomorphism rings of elliptic curves under the {GRH}.
\newblock {\em Journal of Mathematical Cryptology.}, 5(2):101--113, 2011.

\bibitem{end-g2}
Gaetan Bisson.
\newblock Computing endomorphism rings of abelian varieties of dimension two.
\newblock {\em Mathematics of Computation}, 84(294):1977--1989, 2015.

\bibitem{avisogenies}
Gaetan Bisson, Romain Cosset, and Damien Robert.
\newblock {\em AVIsogenies, a library for computing isogenies between abelian
  varieties}, 2010.
\newblock \url{http://avisogenies.gforge.inria.fr/}.

\bibitem{endomorphism}
Gaetan Bisson and Andrew~V. Sutherland.
\newblock Computing the endomorphism ring of an ordinary elliptic curve over a
  finite field.
\newblock {\em Journal of Number Theory}, 131(5):815--831, 2011.

\bibitem{magma}
Wieb Bosma, John Cannon, and Catherine Playoust.
\newblock The {Magma} algebra system: the user language.
\newblock {\em Journal of Symbolic Computation}, 24(3--4):235--265, 1997.

\bibitem{bouyer-streng}
Florian Bouyer and Marco Streng.
\newblock Examples of {CM} curves of genus two defined over the reflex field.
\newblock {\em LMS J. Comput. Math.}, 18(1):507--538, 2015.
\newblock \url{http://arxiv.org/abs/1307.0486}.

\bibitem{broker-lauter-streng}
Reinier Br{\"o}ker, Kristin Lauter, and Marco Streng.
\newblock Abelian surfaces admitting an $(l,l)$-endomorphism.
\newblock {\em Journal of Algebra}, 394:374--396, 2013.
\newblock \url{http://arxiv.org/abs/1106.1884}.

\bibitem{drew-modpol}
Reinier Bröker, Kristin Lauter, and Andrew~V. Sutherland.
\newblock Modular polynomials via isogeny volcanoes.
\newblock {\em Mathematics of Computation}, 81(278):1201--1231, 2012.

\bibitem{eisentrager-lauter}
Kirsten Eisenträger and Kristin~E. Lauter.
\newblock A {CRT} algorithm for constructing genus 2 curves over finite fields.
\newblock In François Rodier and Serge Vladut, editors, {\em Arithmetic,
  Geometry and Coding Theory --- AGCT 2010}, volume~21 of {\em Séminaires et
  Congrès}, pages 161--176. Société Mathématique de France, 2009.

\bibitem{kilicer-streng}
P{\i}nar K{\i}l{\i}\c{c}er and Marco Streng.
\newblock The {CM} class number one problem for curves of genus 2.
\newblock \url{http://arxiv.org/abs/1511.04869}, 2015.

\bibitem{lang-cm}
Serge Lang.
\newblock {\em Complex Multiplication}, volume 255 of {\em Grundlehren der
  mathematischen Wissenschaften}.
\newblock Springer, 1983.

\bibitem{lauter-robert}
Kristin Lauter and Damien Robert.
\newblock Improved {CRT} algorithm for class polynomials in genus 2.
\newblock In Everett Howe and Kiran Kedlaya, editors, {\em Algorithmic Number
  Theory --- ANTS-X}. Mathematical Science Publishers, 2012.

\bibitem{lauter-viray}
Kristin Lauter and Bianca Viray.
\newblock An arithmetic intersection formula for denominators of {I}gusa class
  polynomials.
\newblock {\em Amer. J. Math.}, 137(2):497--533, 2015.
\newblock \url{http://arxiv.org/abs/1210.7841}.

\bibitem{murabayashi-umegaki}
Naoki Murabayashia and Atsuki Umegaki.
\newblock Determination of all $\mathbb{Q}$-rational {CM}-points in the moduli
  space of principally polarized abelian surfaces.
\newblock {\em Journal of Algebra}, 235(1):267--274, 2001.

\bibitem{pila}
Jonathan Pila.
\newblock Frobenius maps of abelian varieties and finding roots of unity in
  finite fields.
\newblock {\em Mathematics of Computation}, 55(192):745--763, 1990.

\bibitem{shimura-taniyama}
Goro Shimura and Yutaka Taniyama.
\newblock {\em Complex multiplication of abelian varieties and its applications
  to number theory}, volume~6 of {\em Publications of the Mathematical Society
  of Japan}.
\newblock The Mathematical Society of Japan, 1961.

\bibitem{sage}
William~A. Stein et~al.
\newblock {\em {S}age {M}athematics {S}oftware ({V}ersion 5.2)}.
\newblock The Sage Development Team, 2012.
\newblock \url{http://www.sagemath.org/}.

\bibitem{stevenhagen-rings}
Peter Stevenhagen.
\newblock The arithmetic of number rings.
\newblock In Joseph~P. Buhler and Peter Stevenhagen, editors, {\em Algorithmic
  Number Theory: Lattices, Number Fields, Curves and Cryptography}, volume~44
  of {\em Mathematical Sciences Research Institute Publications}, pages
  209--266. Cambridge University Press, 2008.

\bibitem{recip}
Marco Streng.
\newblock Sage package for using {S}himura's reciprocity law for {S}iegel
  modular funcions.
\newblock \url{http://www.math.leidenuniv.nl/~streng/recip}.

\bibitem{streng-thesis}
Marco Streng.
\newblock {\em Complex multiplication of abelian surfaces}.
\newblock PhD thesis, Universiteit Leiden, 2010.
\newblock \url{http://www.math.leidenuniv.nl/~streng/thesis.pdf}.

\bibitem{streng-reciprocity}
Marco Streng.
\newblock An explicit reciprocity law for {Siegel} modular functions.
\newblock \url{http://arxiv.org/abs/1201.0020}, 2011.

\bibitem{drew-hilbert}
Andrew~V. Sutherland.
\newblock Computing {Hilbert} class polynomials with the {Chinese} remainder
  theorem.
\newblock {\em Mathematics of Computation}, 80(273):501--538, 2011.

\bibitem{wamelen-rationals}
Paul {van Wamelen}.
\newblock Examples of genus two {CM} curves defined over the rationals.
\newblock {\em Mathematics of Computation}, 68(225):307--320, 1999.

\bibitem{wamelen-correctness-data}
Paul van Wamelen.
\newblock Genus 2 {CM} curves defined over the rationals.
\newblock \url{https://www.math.lsu.edu/~wamelen/CMcurves.txt}, 1999.

\bibitem{wamelen-correctness}
Paul van Wamelen.
\newblock Proving that a genus {$2$} curve has complex multiplication.
\newblock {\em Mathematics of Computation}, 68(228):1663--1677, 1999.

\bibitem{waterhouse}
William~C. Waterhouse.
\newblock Abelian varieties over finite fields.
\newblock {\em Annales Scientifiques de l'École Normale Supérieure},
  2(4):521--560, 1969.

\end{thebibliography}

\end{document}